\documentclass[letterpaper, 12pt]{article}
\usepackage{palatino}

\title{Actions in the Airy line ensemble and convergence to the Airy sheet}
\author{B\'alint Vir\'ag\thanks{\textsc{Dept. of Mathematics, University of Toronto}. \texttt{balint@math.toronto.edu}} \and
Xuan Wu\thanks{\textsc{Dept. of Mathematics, University of Illinois Urbana-Champaign}. \texttt{xuanzw@illinois.edu}}}
\date{}
\usepackage[letterpaper, margin=.9 in]{geometry}

\usepackage{amsmath, amsthm, amssymb}
\usepackage{enumerate}
\usepackage{bbm}
\usepackage{color}

\usepackage[longnamesfirst]{natbib}
\bibpunct[ ]{(}{)}{,}{a}{}{,}

\RequirePackage[colorlinks,urlcolor=blue,linkcolor=black,citecolor=blue]{hyperref}
    \newtheorem{theorem}{Theorem}
    \newtheorem{lemma}[theorem]{Lemma}
    \newtheorem{proposition}[theorem]{Proposition}
    \newtheorem{corollary}[theorem]{Corollary}

\theoremstyle{definition} 
    \newtheorem{definition}[theorem]{Definition}
    \newtheorem{remark}[theorem]{Remark}


\numberwithin{theorem}{section}
\numberwithin{equation}{section}


\newcommand{\eps}{\varepsilon}

\newcommand{\ed}{\stackrel{d}{=}}

\newcommand{\cA}{{\mathcal A}}

\newcommand{\cS}{{\mathcal S}}

\newcommand{\uc}{\text{\sc uc}}


\newcommand{\ar}[1][4pt]{\mathrel{%
   \hbox{\rule[\dimexpr\fontdimen22\textfont2-.2pt\relax]{#1}{.4pt}}%
   \mkern-4mu\hbox{\usefont{U}{lasy}{m}{n}\symbol{41}}}}

\begin{document}
\maketitle
\begin{abstract}
Actions in the Airy line ensemble represent distances from an infinitely far object. We characterize the Airy sheet by $$\mathcal \cS(x,\cdot)=T^x(\cdot,1),$$ 
where $T^x$ is the unique action in the Airy line ensemble satisfying a growth condition depending on $x$. This provides a new simple framework for establishing convergence to the Airy sheet. We present simple conceptual proofs of such results in the case of Brownian last passage, the O'Connell-Yor semidiscrete polymer, the log-gamma polymer and the KPZ equation. 
\end{abstract}

\section{Introduction}
The Airy line ensemble is a two-parameter scaling limit of many integrable planar multipath last passage percolation models. In the KPZ universality class, convergence to the Airy line ensemble was first established for the PNG model \cite{prahofer2002scale} and TASEP \cite{johansson2000shape,borodin2007asymptotics}. The KPZ universality conjecture predicts its emergence as a limit of most natural polymer, first-passage, and last-passage models, even in non-integrable cases. Recently, \cite{aggarwal2025strong} showed that the Airy line ensemble is uniquely determined by a Gibbs resampling property introduced by \cite{CH}, inherited from finite integrable models. Combined with case-by-case proofs of tightness, this yields convergence to the Airy line ensemble in broad settings.

The goal of this paper is to provide a direct and transparent path from convergence to the Airy line ensemble to convergence to the Airy sheet. This is the main step in showing convergence the full scaling limit, the directed landscape, which has independent Airy sheet increments. Prior approaches relied on intricate analysis of pre-limiting Busemann functions and their convergence, see \cite{DOV,DV, wu2023kpz}. Here, we bypass such  analysis by introducing {\bf actions} in the Airy line ensemble: solutions to a semi-discrete Hamilton-Jacobi equation encoding distances from points at infinity.

For $x > 0$, the Airy sheet $\mathcal{S}(x,y)$ is the distance from $(y,1)$ to a point at infinity in direction $x$, also known as a Busemann function \cite{DOV}. These extend naturally to the full domain $\mathbb{R} \times \mathbb{N}$ of the Airy line ensemble and satisfy {\bf action recursion}:
\begin{equation}\label{e:action-intro}
T(x',k) = \bigl(T(x,k) + \cA(x',k) - \cA(x,k)\bigr) \vee \sup_{z \in [x,x']} \bigl(T(z,k+1) + \cA(x',k) - \cA(z,k)\bigr),
\end{equation}
For all  $x<x'\in \mathbb R$ and  $k\in \mathbb N$. (This is simply a local equation characterizing semidiscrete last passage percolation values in the environment $\mathcal A$: $T(\cdot,k)$ is the Skorokhod reflection of $\mathcal A(\cdot,k)$ from $T(\cdot, k+1)$.)

We call functions $T$ satisfying \eqref{e:action-intro} $\mathcal A$-\textbf{actions}. While not all actions are Busemann functions, they are characterized by a simple  growth condition. This yields a new definition: $\mathcal{S}(x,\cdot)$ is the value at level $k=1$ of the unique action $T$ satisfying:
\begin{equation}\label{e:growth}
\liminf_{y\to\infty} \frac{T(y,1)+y^2}{y}\le 2x, \qquad \liminf_{y\to-\infty} \frac{T(y,1)+y^2}{-y}\le -2x.
\end{equation}

The most important conceptual part of the proof is the following theorem, which identifies all top lines of $\mathcal A$-actions with KPZ evolutions from all initial conditions on $[0,\infty)$. Let $\uc_+$ denote the set of upper-semicontinuous functions from $[0,\infty)\to [-\infty,\infty)$.

\begin{theorem}\label{t:main}
Almost surely, the following identification holds:
\begin{align*}
\{T(1,\cdot):T\mbox{ is an }\mathcal{A}\mbox{-action and }T(y,1)\in\mathbb{R}\ \forall y \} 
&= \{\mathcal{S}(f,\cdot):f\in \uc_+,\;\mathcal{S}(f,y)\in \mathbb{R}\ \forall y \},
\end{align*}
where \begin{equation}\label{e:ASvari}
\mathcal{S}(f,y) = \sup_{x \ge 0} [f(x) + \mathcal{S}(x,y)].
\end{equation}
\end{theorem}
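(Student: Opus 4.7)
The plan is to prove the two inclusions separately, combining closure of the class of $\cA$-actions under $\sup$ (for $\supseteq$) with a Legendre-type inversion (for $\subseteq$).

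For the inclusion $\supseteq$, the key observation is that the pointwise supremum of any family of $\cA$-actions is itself an $\cA$-action whenever finite. The constants $\cA(x',k)-\cA(x,k)$ and $\cA(x',k)-\cA(z,k)$ appearing on the right of \eqref{e:action-intro} do not depend on $T$, so both the first summand and the $\sup_z$-term interchange with $\sup$ over any family; similarly, \eqref{e:action-intro} is invariant under adding a constant. Consequently,
\begin{equation*}
T_f(y,k) := \sup_{x \ge 0}[f(x) + T^x(y,k)]
\end{equation*}
is an $\cA$-action whenever finite. Setting $x=x'$ in \eqref{e:action-intro} forces $T(y,k) \ge T(y,k+1)$, so finiteness of $T_f(y,1) = \cS(f,y)$ automatically propagates to all $k\in\N$. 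Since $T_f(y,1)=\cS(f,y)$, this establishes $\supseteq$.

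For the converse inclusion $\subseteq$, given an $\cA$-action $T$ with $T(y,1)\in\R$ for all $y$, I would define the Legendre-type transform
\begin{equation*}
f(x) := \inf_{y\in\R}\bigl[T(y,1)-\cS(x,y)\bigr], \qquad x \ge 0,
\end{equation*}
which lies in $\uc_+$ by joint continuity of $\cS$. The inequality $\cS(f,y) \le T(y,1)$ is built into the definition of $f$, so the content of the proof is the reverse inequality.

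The main obstacle is therefore showing $T(y_0,1) \le \cS(f,y_0)$ for every $y_0\in\R$, equivalently, exhibiting some $x^*\in[0,\infty)$ at which the infimum defining $f(x^*)$ is attained at $y=y_0$ — i.e., a Busemann profile $\cS(x^*,\cdot)$ tangent to $T(\cdot,1)$ from below at $y_0$. My strategy is to exploit the asymptotic condition \eqref{e:growth} characterizing $T^x$: as $x$ sweeps $[0,\infty)$, the location of the minimizer of $y\mapsto T(y,1)-\cS(x,y)$ should shift monotonically and cover all of $\R$, by virtue of the parabolic decay $\cS(x,y)\sim -(x-y)^2$ and the asymptotics $(T^x(y,1)+y^2)/y\to 2x$ as $y\to\infty$. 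Making this sweeping argument rigorous — handling possible non-uniqueness and discontinuity of the minimizer, and verifying surjectivity onto $\R$ from the restricted range $[0,\infty)$ — is where the bulk of the analysis sits, and I expect it to reduce to monotone-ordering and disjointness properties of geodesics in the Airy line ensemble, combined with the growth characterization \eqref{e:growth} of the Busemann actions $T^x$.
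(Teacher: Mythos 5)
Your two-inclusion plan is conceptually aligned with the paper's, and your $\supseteq$ argument via Lemma~\ref{lem:actionsup} (closure of actions under suprema) is exactly right \emph{once} the Busemann actions $T^x$ are known to exist; but their existence is itself a nontrivial part of what must be proved. The paper constructs them (Proposition~\ref{prop:main_2}) from infinite geodesics $\sigma(x)$ ending at $(0,1)$ in direction $x$, imported from \cite{sarkar2021brownian}, and then identifies $T_{\sigma(x)}(y,1)=\cS(x,y)-\cS(x,0)$ via Corollary~\ref{c:direction}. You cannot simply invoke $T^x$ as given.

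The more serious gap is in $\subseteq$. Your Legendre transform $f(x)=\inf_{y}[T(y,1)-\cS(x,y)]$ is a clean reformulation, and you correctly identify that the content is the tightness inequality $T(y_0,1)\le\cS(f,y_0)$ — i.e.\ existence of a Busemann profile tangent from below at each $y_0$. But the ``sweeping argument'' you outline is precisely the hard part, and you do not carry it out. The paper's mechanism for producing the tangent profile is structural, not a continuity/intermediate-value sweep: for an \emph{upper semicontinuous} action $T$, Proposition~\ref{prop:Bq_geodesic} constructs an infinite $(T,(y_0,1))$-geodesic $\gamma(y_0)$ via the map $F_T$ (a Skorokhod-type backward traversal), Proposition~\ref{prop:decomposition} then gives $T(y,1)=\sup_z[T_{\gamma(z)}(y,1)+T(z,1)]$ with equality at $z=y$, and Corollary~\ref{c:direction} (quadrangle inequality plus the Busemann limit \eqref{equ:D}) identifies $T_{\gamma(z)}(\cdot,1)$ with $\cS(d(z),\cdot)-\cS(d(z),z)$ for a finite direction $d(z)\in[0,\infty)$. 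Regrouping by $d(z)=x$ yields the variational form. Your proposal gestures at ``monotone-ordering and disjointness properties of geodesics'' but does not construct the geodesics or prove the decomposition; as it stands there is no proof here, only a plan.

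Two further technical omissions. First, Proposition~\ref{prop:Bq_geodesic} requires $T$ upper semicontinuous; the paper first replaces $T$ by its USC version $T'$ (Lemma~\ref{lem:UCaction}), runs the argument for $T'$, and only then recovers $T=T'$ on level $1$ by combining the continuity of $\cS(f,\cdot)$ (Proposition~\ref{prop:SV}) with monotonicity of $T(\cdot,1)-\cA(\cdot,1)$. You skip this reduction entirely. Second, be careful not to invoke ``\eqref{e:growth} characterizing $T^x$'' as an input: that uniqueness-by-growth statement is Corollary~\ref{c:main}, a \emph{consequence} of this theorem, so leaning on it here would be circular. What you may legitimately use is the parabolic decay of the Airy sheet (Proposition~\ref{prop:DOV}), which is independent of this theorem.
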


This characterization is the cornerstone of our convergence framework. We derive two practical corollaries that characterize Airy sheet values.

\begin{corollary}\label{c:main}
Let $T$ be an $\mathcal{A}$-action with $T(y,1)\in\mathbb{R}$ for all $y$. Suppose there exists $x\in\mathbb{R}$ such that the growth condition \eqref{e:growth} holds.
Then there exists a random $C$ such that $T(y,1)=\mathcal{S}(x,y)+C$ for all $y\in\mathbb{R}$.
\end{corollary}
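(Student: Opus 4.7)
The plan is to apply Theorem~\ref{t:main} to represent $T(\cdot,1)$ as $\mathcal{S}(f,\cdot)$ for some $f\in\uc_+$, and then combine this variational description with the parabolic asymptotics of $\mathcal{S}(z,\cdot)$ to show that the two-sided growth hypothesis~\eqref{e:growth} forces $f$ to be concentrated at the single point $x$.

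\textbf{Step 1 (representation).} Since $T$ is an $\mathcal{A}$-action with $T(y,1)\in\mathbb{R}$ for every $y$, Theorem~\ref{t:main} produces $f\in\uc_+$ with $\mathcal{S}(f,y)\in\mathbb{R}$ for all $y$ and
$$T(y,1)=\mathcal{S}(f,y)=\sup_{z\ge 0}\bigl[f(z)+\mathcal{S}(z,y)\bigr],\qquad y\in\mathbb{R}.$$
It therefore suffices to prove that $f(z)=-\infty$ for every $z\ne x$.

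\textbf{Step 2 (collapsing the support).} The variational formula yields the pointwise lower bound $T(y,1)\ge f(z)+\mathcal{S}(z,y)$ for any $z\ge 0$ with $f(z)>-\infty$. Combined with the Airy sheet parabolic asymptotics
$$\lim_{y\to\infty}\frac{\mathcal{S}(z,y)+y^2}{y}=2z,\qquad \lim_{y\to-\infty}\frac{\mathcal{S}(z,y)+y^2}{-y}=-2z,$$
and using that $f(z)/|y|\to 0$, this gives
$$\liminf_{y\to\infty}\frac{T(y,1)+y^2}{y}\ge 2z,\qquad \liminf_{y\to-\infty}\frac{T(y,1)+y^2}{-y}\ge -2z.$$
The growth hypothesis~\eqref{e:growth} then forces $2z\le 2x$ and $-2z\le -2x$, hence $z=x$. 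Thus $f(z)=-\infty$ for all $z\ne x$, and the supremum in the variational formula is attained at $z=x$ only. Writing $C:=f(x)$, finiteness of $T(y,1)$ forces $C\in\mathbb{R}$, and the variational formula collapses to $T(y,1)=C+\mathcal{S}(x,y)$.

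\textbf{Main obstacle.} Once Theorem~\ref{t:main} is granted, the skeleton is a short two-sided comparison; the non-trivial ingredient is the parabolic asymptotics of $\mathcal{S}(z,\cdot)$ with the limits above valid almost surely and uniformly enough in $z$ to pass from the variational bound to the $\liminf$ inequality. This should follow from standard Airy sheet parabolic decay estimates but must be invoked on the full-measure event where Theorem~\ref{t:main} holds. A secondary subtlety is that the argument above forces $\mathrm{supp}(f)\subseteq [0,\infty)\cap\{x\}$, which is empty when $x<0$; handling that regime requires either an extension of the variational identity~\eqref{e:ASvari} to initial data on all of $\mathbb{R}$, or a reduction to $x\ge 0$ via a spatial shift of the underlying Airy line ensemble.
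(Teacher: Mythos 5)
Your argument is the same as the paper's: invoke Theorem~\ref{t:main} to write $T(\cdot,1)=\mathcal{S}(f,\cdot)$, then use the parabolic asymptotics of $\mathcal{S}(z,\cdot)$ (from Proposition~\ref{prop:DOV}) to squeeze the support of $f$ down to $\{x\}$. The only substantive remark is about your ``main obstacle'' paragraph: the worry about $x<0$ is not a real gap and requires no extension of~\eqref{e:ASvari}. Since $T(y,1)\in\mathbb{R}$ forces $f\not\equiv-\infty$, there is some $x_0\ge 0$ with $f(x_0)\in\mathbb{R}$, and your own two-sided bounds then give $x=x_0\ge 0$. So the hypothesis of the corollary simply cannot hold with $x<0$, and that regime is vacuous rather than requiring separate treatment. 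Also, the ``uniformity in $z$'' you flag is not needed: you fix a single $z$ with $f(z)>-\infty$, bound $T(y,1)$ below by $f(z)+\mathcal{S}(z,y)$ for that one $z$, and pass to the limit; no uniformity across $z$ enters.
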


\begin{corollary}\label{c:main2}
Let $T$ be an $\mathcal{A}$-action. Suppose there exists $x\in\mathbb{R}$ such that for all $n\in \mathbb{Z}$, $T(n,1)+(n-x)^2$ has the GUE Tracy-Widom distribution. Then almost surely $T(y,1)=\mathcal{S}(x,y)$ for all $y\in \mathbb{R}$.
\end{corollary}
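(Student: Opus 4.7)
The plan is to verify the growth condition \eqref{e:growth} from the marginal hypothesis, apply Corollary \ref{c:main} to obtain an identification $T(y,1)=\mathcal{S}(x,y)+C$ for a random constant $C$, and then pin down $C=0$ using the marginal hypothesis once more.

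First, the GUE Tracy--Widom distribution has tails of order $e^{-ct^{3/2}}$ on the right and $e^{-ct^{3}}$ on the left, so for any $\epsilon>0$,
$\sum_{n\neq 0}\mathbb{P}(|T(n,1)+(n-x)^{2}|>\epsilon|n|)<\infty$,
and Borel--Cantelli yields $(T(n,1)+(n-x)^{2})/|n|\to 0$ almost surely as $|n|\to\infty$. Since $T(n,1)+n^{2}=2nx-x^{2}+(T(n,1)+(n-x)^{2})$, dividing by $n$ (respectively by $-n$) gives $(T(n,1)+n^{2})/n\to 2x$ along $n\to+\infty$ and $(T(n,1)+n^{2})/(-n)\to -2x$ along $n\to-\infty$, almost surely. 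The liminf over real $y$ is bounded above by the liminf along any integer subsequence, so \eqref{e:growth} holds almost surely and Corollary \ref{c:main} supplies the desired random $C$.

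It remains to show $C=0$ almost surely. Setting $W_{n}:=\mathcal{S}(x,n)+(n-x)^{2}$, both $W_{n}\ed\mathrm{TW}$ (Airy sheet marginal) and $W_{n}+C=T(n,1)+(n-x)^{2}\ed\mathrm{TW}$ (hypothesis), so $\mathbb{E}[e^{itW_{n}}e^{itC}]=\phi_{\mathrm{TW}}(t)$ for every integer $n$ and every $t\in\mathbb{R}$. The plan is to pass to the limit $n\to\infty$: by mixing of the Airy sheet along its second coordinate, $\mathcal{S}(x,n)$ decorrelates from $\mathcal{S}(x,0)$, and combined with a localization argument for $T(0,1)$ that exploits the Skorokhod reflection structure in \eqref{e:action-intro}, one obtains joint asymptotic independence of $W_{n}$ and $C=T(0,1)-\mathcal{S}(x,0)$. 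The limiting identity becomes $\phi_{\mathrm{TW}}(t)\phi_{C}(t)=\phi_{\mathrm{TW}}(t)$, and since $\phi_{\mathrm{TW}}$ has no zeros, $\phi_{C}\equiv 1$, whence $C=0$ almost surely.

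The main obstacle is the joint asymptotic independence in the final step. Decorrelation of the Airy sheet along the spatial direction is an essentially known analytic input and handles the $\mathcal{S}(x,0)$-part of $C$. The more subtle point is showing that $T(0,1)$ is effectively measurable with respect to $\mathcal{A}$ in a compact window around $0$, so that the hypothesized TW marginal, combined with the action recursion, forces the shift to be deterministic; this is where the structural content of the action must be carefully exploited.
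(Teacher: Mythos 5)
Your first two steps are sound and closely parallel the paper's: you verify the growth condition \eqref{e:growth} from the Tracy--Widom marginals (you use Borel--Cantelli with tail bounds rather than the paper's Fatou's-lemma computation, but both work and give slightly more than is needed), and you invoke Corollary~\ref{c:main} to obtain $T(y,1)=\mathcal{S}(x,y)+C$ for a random $C$. The divergence -- and the gap -- is in how you try to conclude $C=0$.

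Your final step aims to establish joint asymptotic independence of $W_n=\mathcal{S}(x,n)+(n-x)^2$ and $C$, then factorize characteristic functions. You flag this yourself as "the main obstacle," and it is indeed where the argument breaks down. The proposed ingredients -- spatial mixing of the Airy sheet plus a "localization argument" showing $T(0,1)$ is effectively measurable with respect to $\mathcal{A}$ in a compact window -- are not established, and the localization claim is doubtful: $T$ is determined by the action recursion on all of $\mathbb{R}\times\mathbb{N}$ together with boundary behavior at infinity, so there is no reason $C=T(0,1)-\mathcal{S}(x,0)$ should decouple from $W_n$ even for large $n$. Asymptotic independence is a substantially stronger input than what is actually available.

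The paper avoids this entirely via a cleaner argument (Lemma~\ref{l:ergodic}): with $Y_n=\mathcal{S}(x,n)+(n-x)^2$ stationary and ergodic (a known property of the Airy process) and $Y'_n=Y_n+C$ having the same marginal, the ergodic theorem applied to $\frac{1}{n}\sum_{j\le n}e^{i\lambda Y'_j}$ gives $\mathbb{E}[e^{i\lambda Y_1}]=\mathbb{E}[e^{i\lambda C}]\,\mathbb{E}[e^{i\lambda Y_1}]$ directly, and nonvanishing of $\varphi_{\mathrm{TW}}$ near $0$ forces $C=0$. No independence between $C$ and $Y_n$ is needed -- only ergodicity of $(Y_n)$, which is free. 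If you want to salvage your approach, you should replace the mixing/localization step with exactly this ergodic-averaging argument; as written, the proposal does not close.
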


We introduce {\bf action representations}: functions satisfying the action recursion, quadrangle inequalities, and symmetry. We prove (Theorems~\ref{thm:action-rep-sym}, \ref{t:action-rep_discrete}) that convergence in distribution to the Airy line ensemble, combined with Tracy-Widom marginals, implies local uniform convergence to the Airy sheet. This framework yields short, unified convergence proofs for:

\begin{itemize}
\item Brownian last passage percolation recovering \citet{DOV},
\item O'Connell-Yor semidiscrete polymers of \cite{o'connell2012DP},
\item Log-gamma directed polymers of \cite{sep-palv-2009}, recovering \cite{zhang2025convergence},
\item The KPZ equation, recovering \cite{wu2023kpz}.
\end{itemize}

Our approach eliminates case-by-case Busemann analysis, offering a conceptually clear, modular, and broadly applicable method for Airy sheet convergence in the KPZ class.

The paper is organized as follows. In Section~\ref{sec:basics}, we define actions, establish their connection to infinite geodesics, and prove a variational decomposition (Proposition~\ref{prop:decomposition}). Subsection~\ref{sec:conv_to_AS} introduces action representations and proves general convergence theorems to the Airy sheet (Theorems~\ref{thm:action-rep-sym} and \ref{t:action-rep_discrete}). The core identification of actions with Airy sheet values is established in Section~\ref{sec:Airyaction}, where we prove Theorem~\ref{t:main} and Corollaries~\ref{c:main} and \ref{c:main2}. Applications are presented in Section~\ref{sec:application}: Brownian last passage percolation (Theorem~\ref{t:BLPP_to_FP}), O'Connell-Yor polymers (Theorem~\ref{t:OY_to_FP}), log-gamma polymers (Theorem~\ref{t:LG_to_FP}), and the KPZ equation (Theorems~\ref{thm:KPZ_action}). An appendix reviews the hypograph and Skorokhod topologies used in compactness arguments.

\section{Basics of actions and geodesics}\label{sec:basics}

The goal of this section is to prove Proposition~\ref{prop:decomposition}, which expresses the action values in a variational form involving infinite geodesics.

Subsection~\ref{sec:action} introduces actions, the core concept of this paper. ``Action" is a standard term in Lagrangian mechanics. It is usually denoted $S$, but that clashes with our notation for the Airy sheet, so we use the notation $T$. In Subsection~\ref{sec:LPP_geodesic}, we define last passage percolation (LPP) and geodesics. Subsection~\ref{sec:connection} establishes the connection between infinite geodesics and actions. This connection is standard, but we are not aware of any references that would work in the present setting. 

Throughout this paper, the underlying space $M$ is defined as
\begin{equation}\label{e:M}
M = I \times J, \qquad
I = [\alpha, \infty) \quad \text{or} \quad I = (-\infty, \infty),
\qquad
J = \{ k \in \mathbb{N} : k < j \}
\end{equation}
for some $\alpha \in \mathbb{R}$ and $j \in \mathbb{N} \cup \{\infty\}$. This accommodates both pre-limiting and limiting models. An {\bf environment} is a function $A: M \to \mathbb{R}$ such that for each $k \in J$, the map $y \mapsto A(y,k)$ is cadlag in $y$.

\subsection{Actions}\label{sec:action}

\begin{definition}[Actions]
Let $A$ be an environment on $M=I\times J$, see \eqref{e:M}. An $A$-\textbf{action} is a function $T:M\to \mathbb{R}\cup\{\pm\infty\}$ such that for all $k\in J$ and $x,y\in I$ with $x<y$, we have
\begin{align}\label{equ:Local_metric_composition_law}
T(y,k) =(T(x,k)+A(y,k)-A(x,k) ) \vee \sup_{z\in [x,y]}\big( T(z,k+1)+A(y,k)-A(z,k)\big).
\end{align}
Here we adopt the convention that $T(z,k+1)\equiv -\infty$ if $k+1\notin J$. Equation \eqref{equ:Local_metric_composition_law} is referred to as the {\bf action recursion}. 
\end{definition}

In the next few lemmas we establish some basic topological properties of actions. We first show that the supremum of actions is also an action.

\begin{lemma}\label{lem:actionsup}
Let $A$ be an environment on $M=I\times J$ and let $T_\lambda$, $\lambda\in\Lambda$ be a family of $A$-actions. Then $T=\sup_{\lambda\in\Lambda} T_\lambda$ is an $A$-action.
\end{lemma}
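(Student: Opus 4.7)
The plan is to derive the action recursion for $T$ directly by taking the pointwise supremum $\sup_{\lambda\in\Lambda}$ of the recursion satisfied by each $T_\lambda$. Fix $k\in J$ and $x<y$ in $I$. For each $\lambda$, the defining recursion reads
\begin{equation*}
T_\lambda(y,k)=(T_\lambda(x,k)+A(y,k)-A(x,k))\vee \sup_{z\in[x,y]}(T_\lambda(z,k+1)+A(y,k)-A(z,k)).
\end{equation*}
Taking $\sup_\lambda$ of both sides turns the left side into $T(y,k)$ by definition, and the task reduces to simplifying the right side.

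For the right side I would invoke two abstract facts about suprema of extended-real-valued families: first, $\sup_\lambda(a_\lambda\vee b_\lambda)=(\sup_\lambda a_\lambda)\vee(\sup_\lambda b_\lambda)$, which splits the expression into the two terms joined by $\vee$; second, iterated suprema commute, $\sup_\lambda\sup_{z\in[x,y]}=\sup_{z\in[x,y]}\sup_\lambda$, which lets me pull $\sup_\lambda$ past the variable $z$. Because $A$ is real-valued, the shifts $A(y,k)-A(x,k)$ and $A(y,k)-A(z,k)$ are finite constants in $\lambda$ and can be factored out of $\sup_\lambda$. Together these manipulations yield $T(x,k)+A(y,k)-A(x,k)$ in the first summand and $\sup_{z\in[x,y]}(T(z,k+1)+A(y,k)-A(z,k))$ in the second, which is exactly the action recursion for $T$.

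There is no real obstacle: the argument is purely an interchange of suprema, requiring no regularity in $\lambda$ and no compactness. The only bookkeeping point is the convention that $T(\cdot,k+1)\equiv-\infty$ when $k+1\notin J$; this is automatically compatible with $\sup_\lambda$ since each $T_\lambda(\cdot,k+1)\equiv-\infty$ in that case, so their supremum does as well. Thus the proof should be only a few lines, with the bulk of the text devoted to citing the two sup-interchange facts and verifying that finite shifts commute with $\sup_\lambda$ in the extended reals.
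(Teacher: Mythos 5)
Your proof is correct and is essentially the same argument as the paper's, just packaged more compactly: where the paper proves the two inequalities $\leq$ and $\geq$ explicitly (using $T_\lambda\leq T$ for one direction and bounding each term of the right side by $T(y,k)$ for the other), you observe that taking $\sup_\lambda$ of both sides of the recursion directly yields the result, since supremum commutes with binary maximum, with iterated suprema, and with addition of a finite real constant. Both arguments are the same underlying manipulation of suprema, and your version is a clean, valid alternative presentation.
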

\begin{proof}
Fix $x<y$ in $I$ and $k\in J$. For each $\lambda\in\Lambda$, using $T_\lambda\leq T$, we have
\begin{align*}
T_\lambda(y,k) =&(T_\lambda(x,k)+A(y,k)-A(x,k) ) \vee \sup_{z\in [x,y]}\big( T_\lambda(z,k+1)+A(y,k)-A(z,k)\big)\\
\leq&(T(x,k)+A(y,k)-A(x,k) ) \vee \sup_{z\in [x,y]}\big( T(z,k+1)+A(y,k)-A(z,k)\big).
\end{align*}
Taking supremum over $\lambda\in\Lambda$ gives
\begin{align*}
T(y,k)\leq (T(x,k)+A(y,k)-A(x,k) ) \vee \sup_{z\in [x,y]}\big( T(z,k+1)+A(y,k)-A(z,k)\big).
\end{align*}

For each $\lambda\in\Lambda$, using $T_\lambda\leq T$ we have $
T_\lambda(x,k)+A(y,k)-A(x,k)\leq T_\lambda(y,k)\leq T(y,k).$  This gives
\begin{align*}
T(x,k)+A(y,k)-A(x,k)\leq T(y,k).
\end{align*}
Similarly, for any $z\in [x,y]$ we have
\begin{align*}
T(z,k+1)+A(y,k)-A(x,k)\leq T(y,k).
\end{align*}
Combining the above yields
\begin{align*}
(T(x,k)+A(y,k)-A(x,k) ) \vee \sup_{z\in [x,y]}\big( T(z,k+1)+A(y,k)-A(z,k)\big)\leq T(y,k).
\end{align*}
The proof is complete.
\end{proof}

Next, we show that the upper semicontinuous version of an action is also an action. 

\begin{lemma}\label{lem:UCaction}
Let $A$ be a continuous environment on $M=I\times J$ and let $T$ be an $A$-action. Denote by $T'$ the upper semicontinuous version of $T$. That is,
\begin{align*}
T'(x,k)=\lim_{\varepsilon\to 0}\sup_{w\in [x-\varepsilon,x+\varepsilon]\cap I} T(w,k).
\end{align*}
Then $T'$ is an $A$-action.
\end{lemma}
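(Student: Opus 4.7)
The plan is to verify the action recursion for $T'$ by establishing the two inequalities separately, using the recursion for $T$, continuity of $A$, and the fact that $T \leq T'$ pointwise (immediate from the definition, since $x$ always lies in the window $[x-\varepsilon, x+\varepsilon]$). Note also that $T'(y,k) = \limsup_{w \to y,\ w \in I} T(w,k)$. Fix $x < y$ in $I$ and $k \in J$; write $R$ for the right-hand side of the desired recursion for $T'$.

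For the upper bound $T'(y,k) \leq R$, I apply the recursion for $T$ at $(x,w)$ for $w \in [y-\varepsilon, y+\varepsilon] \cap I$ (which satisfies $w > x$ for $\varepsilon$ small), bound $T(x,k) \leq T'(x,k)$ and $T(z,k+1) \leq T'(z,k+1)$ for $z \in [x,y]$, then take the supremum over $w$ and send $\varepsilon \to 0$. Continuity of $A$ replaces $A(w,k)$ by $A(y,k)$ uniformly. The delicate point is the sup $\sup_{z \in [x,w]}$ when $w > y$, which includes $z \in (y,w]$: its contribution is bounded by $\sup_{z \in [y-\varepsilon, y+\varepsilon]\cap I} T(z,k+1) + (A(w,k) - A(z,k))$, which in the limit is at most $T'(y,k+1)$; this is captured by the $z = y$ term of the supremum defining $R$.

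For the lower bound $T'(y,k) \geq R$, I handle the two alternatives of the $\vee$ defining $R$ separately. For the first, the recursion for $T$ gives $T(w,k) \geq T(w',k) + A(w,k) - A(w',k)$ whenever $w' < w$ in $I$; taking $\limsup_{w \to y}$ and then $\limsup_{w' \to x}$, together with continuity of $A$, yields $T'(y,k) \geq T'(x,k) + A(y,k) - A(x,k)$. For the second, fix $z_0 \in [x,y]$ and pick $z_n \to z_0$ in $I$ with $T(z_n, k+1) \to T'(z_0, k+1)$. For any $w_n \in I$ with $w_n > z_n$, the recursion at $(x, w_n)$ gives $T(w_n, k) \geq T(z_n, k+1) + A(w_n, k) - A(z_n, k)$. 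Choosing $w_n \to y$ with $w_n > z_n$ and taking limits produces $T'(y,k) \geq T'(z_0, k+1) + A(y,k) - A(z_0, k)$; supremum over $z_0 \in [x,y]$ completes this side.

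The main obstacle is the boundary case $z_0 = y$ with $z_n \downarrow y$ in the lower bound: one must pick $w_n > z_n$ with $w_n \to y$ (for instance $w_n = z_n + 1/n$, which lies in $I$ for $n$ large), and the conclusion uses $\limsup_{w \to y^+} T(w,k) \leq T'(y,k)$. Combining the two bounds yields the action recursion for $T'$.
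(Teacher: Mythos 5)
Your proof is correct and follows essentially the same $\eps$-$\delta$ strategy as the paper's: verify the two inequalities in the recursion by shrinking windows around $x$, $y$, and the intermediate $z$, using continuity of $A$ and the fact that $T\le T'$. The paper packages this slightly differently — it introduces $T^\varepsilon(x,k)=\sup_{w\in[x-\varepsilon,x+\varepsilon]\cap I}T(w,k)$ and a single modulus $\delta(\varepsilon)$, proves $T^\varepsilon$ satisfies the recursion up to an additive $O(\delta(\varepsilon))$ error, and then sends $\varepsilon\to 0$; whereas you work with $T'$ directly and extract convergent sequences $z_n, w_n$ term by term — but the underlying idea is the same, and your treatment of the boundary cases (the portion $z\in(y,w]$ in the upper bound, and $z_0=y$ in the lower bound) is the correct way to handle the only genuinely delicate points.
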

\begin{proof}
We aim to verify the action recursion \eqref{equ:Local_metric_composition_law}. For any $0<\varepsilon<1$, let 
\begin{align*}
T^\varepsilon(x,k)=\sup_{w\in [x-\varepsilon,x+\varepsilon]\cap I} T(w,k).
\end{align*}
From now on, we fix $x,y\in I$ with $x<y$ and $k\in J$. We further assume $k+1\in J$. The argument for the case $k+1\notin J$ is similar and simpler. Let 
$$\delta(\varepsilon)= \sup\{ |A(w,k)-A(w',k)|\vee |A(w,k+1)-A(w',k+1)|:\,  w,w'\in [x-1,y+1]\cap I, |w-w'|\leq \varepsilon \}. $$
From the continuity of $A$, $\lim_{\varepsilon\to 0}\delta(\varepsilon)=0$. We assume $\varepsilon<y-x$. We claim that
\begin{align}\label{equ:UC_action_1}
(T^\varepsilon(x,k)+A(y,k)-A(x,k) ) \vee \sup_{z\in [x,y]}\big( T^\varepsilon(z,k+1)+A(y,k)-A(z,k)\big)\leq T^\varepsilon(y,k)+2\delta(\varepsilon) 
\end{align}
and that
\begin{align}\label{equ:UC_action_2}
T^\varepsilon(y,k)\leq &(T^\varepsilon(x,k)+A(y,k)-A(x,k) ) \vee \sup_{z\in [x,y]}\big( T^\varepsilon(z,k+1)+A(y,k)-A(z,k)\big)+\delta(\varepsilon).
\end{align}
Sending $\varepsilon\to 0$ in \eqref{equ:UC_action_1} and \eqref{equ:UC_action_2} verifies \eqref{equ:Local_metric_composition_law} for $T'$.

It remains to prove \eqref{equ:UC_action_1} and \eqref{equ:UC_action_2}. We begin with \eqref{equ:UC_action_1}. For any $w\in [x-\varepsilon,x+\varepsilon]\cap I$, we have
\begin{align*}
T(w,k)+A(y,k)-A(x,k)\leq &T(w,k)+A(y,k)-A(w,k)+\delta(\varepsilon)\\
\leq &T(y,k)+\delta(\varepsilon)
\leq T^\varepsilon(y,k)+\delta(\varepsilon).
\end{align*}
Hence
\begin{align}\label{equ:UC_action_3}
T^\varepsilon(x,k)+A(y,k)-A(x,k)\leq T^\varepsilon(y,k)+\delta(\varepsilon).
\end{align}
Fix $z\in [x,y]$. For any $w\in [z-\varepsilon,z+\varepsilon]\cap I$,
\begin{align*}
T(w,k+1)+A(y,k)-A(z,k)\leq &T(w,k+1)+A(y+\varepsilon,k)-A(w,k)+2\delta(\varepsilon)\\
\leq &T(y+\varepsilon,k)+2\delta(\varepsilon)\leq T^\varepsilon(y,k)+2\delta(\varepsilon).
\end{align*}
This implies
\begin{align*}
T^\varepsilon(z,k+1)+A(y,k)-A(z,k)\leq T^\varepsilon(y,k)+2\delta(\varepsilon),
\end{align*}
and
\begin{align}\label{equ:UC_action_4}
\sup_{z\in [x,y]} \big(T^\varepsilon(z,k+1)+A(y,k)-A(z,k)\big)\leq T^\varepsilon(y,k)+2\delta(\varepsilon).
\end{align}
Combining \eqref{equ:UC_action_3} and \eqref{equ:UC_action_4} yields \eqref{equ:UC_action_1}.

Next, we turn to \eqref{equ:UC_action_2}. Fix $w\in [y-\varepsilon,y+\varepsilon]\cap I$. Then
\begin{equation}\label{equ:UC_action_5}
\begin{split}
T(x,k)+A(w,k)-A(x,k)\leq &T(x,k)+A(y,k)-A(x,k)+\delta(\varepsilon)\\
\leq &T^\varepsilon (x,k)+A(y,k)-A(x,k)+\delta(\varepsilon).
\end{split}
\end{equation}
We claim that
\begin{align}\label{equ:UC_action_6}
\begin{split}
\sup_{z\in [x,w]}&\big( T(z,k+1)+A(w,k)-A(z,k)\big)
\\ &\leq \sup_{z\in [x,y]} \big(T^\varepsilon(z,k+1)+A(y,k)-A(z,k)\big)+\delta(\varepsilon). 
\end{split}
\end{align}
Assume for a moment \eqref{equ:UC_action_6} holds. Combining \eqref{equ:UC_action_5} and \eqref{equ:UC_action_6} gives
\begin{align*}
T(w,k) =&(T(x,k)+A(w,k)-A(x,k) ) \vee \sup_{z\in [x,w]}\big( T(z,k+1)+A(w,k)-A(z,k)\big)\\
\leq &(T^\varepsilon(x,k)+A(y,k)-A(x,k) ) \vee \sup_{z\in [x,y]}\big( T^\varepsilon(z,k+1)+A(y,k)-A(z,k)\big)+\delta(\varepsilon).
\end{align*}
Taking the supremum over $w\in [y-\varepsilon,y+\varepsilon]$ gives \eqref{equ:UC_action_2}.

Lastly, we prove \eqref{equ:UC_action_6}. Fix $z_0\in [x,w]$. If $z_0\in [x,y]$, then
\begin{align*}
T(z_0,k+1)+A(w,k)-A(z_0,k)\leq &T(z_0,k+1)+A(y,k)-A(z_0,k)+\delta(\varepsilon)\\
\leq &\sup_{z\in [x,y]} \big(T(z,k+1)+A(y,k)-A(z,k)\big)+\delta(\varepsilon)\\
\leq &\sup_{z\in [x,y]} \big(T^\varepsilon(z,k+1)+A(y,k)-A(z,k)\big)+\delta(\varepsilon).
\end{align*}
If $z_0>y$. Then $y<z_0\leq w\leq y+\varepsilon$. In particular, $|w-z_0|\leq \varepsilon$. Hence
\begin{align*}
T(z_0,k+1)+A(w,k)-A(z_0,k)\leq &T^\varepsilon(y,k+1)+\delta(\varepsilon)\\
\leq &\sup_{z\in [x,y]} \big(T^\varepsilon(z,k+1)+A(y,k)-A(z,k)\big)+\delta(\varepsilon).
\end{align*}
Combining the above gives \eqref{equ:UC_action_6}. The proof is complete. 
\end{proof}

\subsection{Last passage percolation and geodesics}\label{sec:LPP_geodesic}

In this subsection, we establish the relationship between semidiscrete last passage percolation, its geodesics, and actions. In this and the following subsections, we always assume the environment $A$ is continuous. We equip the extended natural numbers $\mathbb{N}\cup\{\infty\}$ with the topology which compactifies $\mathbb{N}$ (with the discrete topology) at $\infty$. For $(x_1,k_1), (x_2,k_2)\in \mathbb{R}\times(\mathbb{N}\cup\{\infty\})$, we let 
$$(x_1,k_1)\preceq (x_2,k_2) \mbox{ denote the relation }x_1\leq x_2,\mbox{ and } k_1\geq k_2.$$

\begin{definition}
A \textbf{directed path} $\gamma$ is a closed, non-empty subset of $\mathbb{R}\times(\mathbb{N}\cup\{\infty\})$ such that $(\gamma,\preceq)$ is an ordered set, and the coordinate projections $\Pi_1(\gamma)\subset\mathbb{R}$ and $\Pi_2(\gamma)\subset\mathbb{N}\cup\{\infty\}$ are closed intervals.

Given $(x_1,k_1), (x_2,k_2)\in\mathbb{R}\times\mathbb{N}$, we denote by $\mathcal{P}((x_1,k_1)\ar (x_2,k_2))$ the collection of directed paths $\gamma$ that satisfy $(x_1,k_1),(x_2,k_2)\in\gamma$ and $(x_1,k_1)\preceq q\preceq (x_2,k_2)$ for all $q\in \gamma$. Given $(x,k)\in \mathbb{R}\times\mathbb{N}$, we denote by $\mathcal{P}((-\infty,\ast)\ar (x,k))$ the collection of directed paths $\gamma$ such that $(x,k)\in\gamma$, $q\preceq (x,k)$ for all $q\in \gamma$, and $\Pi_1(\gamma)=(-\infty,x]$.  
\end{definition}

We associate each directed path $\gamma\in \mathcal{P}((x_1,k_1)\ar (x_2,k_2))$ a vector $(z_{k_1+1}, z_{k_1},\dots, z_{k_2})$ through $z_{k_1+1}=x_1, z_{k_2}=x_2$, and
\begin{align*} 
z_j= \min\{ z\in [x_1,x_2]\, :\, (z,j-1)\in \gamma  \}\ \textup{for}\ k_2+1\leq  j\leq k_1. 
\end{align*}
Given a continuous environment $A$ on $M=I\times J$, $(x_1,k_1),(x_2,k_2)\in M$ with $(x_1,k_1)\preceq (x_2,k_2)$, and $\gamma\in \mathcal{P}((x_1,k_1)\ar (x_2,k_2))$, the length of $\gamma$ is defined to be 
\begin{align}\label{equ:path_length} 
L(\gamma,A)=\sum_{j=k_2}^{k_1} A(z_j,j)-A(z_{j+1},j).
\end{align}  
For $q_1\preceq q_2$ in $M$, we define the \textbf{last passage time} from $q_1$ to $q_2$ by
\begin{align}\label{def:LPPtime}
A(q_1\ar q_2)=\max_{\gamma\in\mathcal{P}(q_1\ar q_2)} L(\gamma,A).
\end{align}
It is direct to check that we have the triangle inequality
\begin{equation}\label{equ:triangle}
A(q_1\ar q_2)+A(q_2\ar q_3)\leq A(q_1\ar q_3)
\end{equation}
for all $q_1\preceq q_2\preceq q_3$ in $M$. Also, we have the following quadrangle inequality.
\begin{lemma}[Proposition 3.8 in \cite{DOV}]\label{lem:quadrangle_LP}
Let $A$ be a continuous environment on $M=I\times J$. For any $x\leq x'\leq y\leq y'$ in $I$ and $k\leq m$ in $J$, we have
\begin{equation}\label{equ:quadrangle_LP}
A((x,m)\ar (y,k))+A((x',m)\ar (y',k))\geq A((x,m)\ar (y',k))+A((x',m)\ar (y,k)).
\end{equation}
\end{lemma}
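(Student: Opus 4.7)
The plan is to prove \eqref{equ:quadrangle_LP} by the classical ``crossing-and-swap'' argument, adapted to the semidiscrete setting. First I would take optimal geodesics $\gamma_1\in\mathcal{P}((x,m)\ar(y',k))$ and $\gamma_2\in\mathcal{P}((x',m)\ar(y,k))$ (whose existence follows from continuity of $A$ and compactness of the path space). Let $(z^i_j)_{j=k}^{m+1}$ denote the jump coordinates of $\gamma_i$, so that $\gamma_i$ occupies the horizontal segment $[z^i_{j+1},z^i_j]$ at each level $j\in\{k,\ldots,m\}$; each such sequence is non-decreasing as $j$ decreases from $m+1$ to $k$. The endpoint conventions give $z^1_{m+1}=x\leq x'=z^2_{m+1}$ and $z^1_k=y'\geq y=z^2_k$, so the two sequences swap their relative order between the top and the bottom of the rectangle.

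Next I would locate a level at which the two paths overlap. Set $j_0:=\max\{j\in\{k,\ldots,m\}:z^1_j\geq z^2_j\}$, which is well-defined because $z^1_k\geq z^2_k$. By definition $z^1_{j_0}\geq z^2_{j_0}$, and either $j_0=m$ (so $z^1_{j_0+1}=x\leq x'=z^2_{j_0+1}$) or $j_0<m$ and maximality forces $z^1_{j_0+1}<z^2_{j_0+1}$; in either case $z^1_{j_0+1}\leq z^2_{j_0+1}$. Combined with $z^2_{j_0+1}\leq z^2_{j_0}$, this yields $[z^2_{j_0+1},z^2_{j_0}]\subseteq[z^1_{j_0+1},z^1_{j_0}]$, so the horizontal segments of $\gamma_1$ and $\gamma_2$ at level $j_0$ overlap. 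Pick any common point $z$; for concreteness, $z:=z^2_{j_0}$.

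Now I would perform the swap. Define $\gamma_1'$ to follow $\gamma_1$ through all levels strictly above $j_0$, to traverse level $j_0$ from $z^1_{j_0+1}$ to $z$, and then to jump down and continue as $\gamma_2$ all the way to $(y,k)$; define $\gamma_2'$ symmetrically, ending at $(y',k)$. Both are admissible directed paths. A direct computation shows that the level-$j_0$ contributions to $L(\gamma_1')+L(\gamma_2')$ and to $L(\gamma_1)+L(\gamma_2)$ both equal $A(z^1_{j_0},j_0)+A(z^2_{j_0},j_0)-A(z^1_{j_0+1},j_0)-A(z^2_{j_0+1},j_0)$, and at every other level the two pairs of contributions coincide, since the paths are merely relabelled. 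Hence $L(\gamma_1')+L(\gamma_2')=L(\gamma_1)+L(\gamma_2)$, and since $\gamma_1'$ and $\gamma_2'$ are admissible but possibly not optimal,
\[L(\gamma_1)+L(\gamma_2)\leq A((x,m)\ar(y,k))+A((x',m)\ar(y',k)),\]
which is exactly \eqref{equ:quadrangle_LP}.

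The conceptual core is the crossing lemma above, which reduces to an intermediate-value observation for two non-decreasing sequences whose relative order swaps between top and bottom. I expect the main friction to be semidiscrete bookkeeping---tracking endpoint conventions at the extreme levels and handling the minor edge case $j_0=m$---rather than any genuine difficulty. The same swap argument underlies Proposition~3.8 of \cite{DOV} and standard planar LPP references, so no new ingredient is needed.
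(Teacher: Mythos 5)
Your crossing-and-swap argument is correct; in particular, the intermediate-value step identifying the level $j_0$ at which the two paths' horizontal segments nest, and the verification that the swapped pair $\gamma_1',\gamma_2'$ are admissible and have the same total length, are all sound. Note that the paper does not prove this lemma itself but simply cites it as Proposition~3.8 of \cite{DOV}; your argument is the standard planar-LPP swap proof underlying that reference, so the approach is essentially the same.
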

A directed path $\gamma$ is called a \textbf{geodesic} if for all $q_1\preceq q_2\preceq q_3$ in $\gamma\cap (\mathbb{R}\times\mathbb{N})$, we have
\begin{equation*} 
A(q_1\ar q_2)+A(q_2\ar q_3)=A(q_1\ar q_3).
\end{equation*}
We note that for any $q_1\preceq q_2$ in $M$, there exists a geodesic $\gamma\in \mathcal{P}(q_1\ar q_2)$ such that $L(\gamma,A)=A(q_1\ar q_2)$.

\begin{lemma}\label{lem:Action+LPP}
Let $A$ be a continuous environment on $M$ and $T$ be an $A$-action. Then for any $q_1\preceq q_2$ in $M$, we have
\begin{equation}\label{equ:Action+LPP}
T(q_2)\geq T(q_1)+A(q_1\ar q_2).
\end{equation}
\end{lemma}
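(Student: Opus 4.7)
The plan is to reduce the lemma to a path-wise statement and then iterate the action recursion along an arbitrary directed path. Since $A(q_1 \ar q_2) = \sup_\gamma L(\gamma, A)$ by definition, it suffices to establish $T(q_2) \geq T(q_1) + L(\gamma, A)$ for each fixed $\gamma \in \mathcal{P}(q_1 \ar q_2)$; taking the supremum over $\gamma$ then yields the claim.

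Fix such $\gamma$ with associated vector $(z_{k_1+1}, z_{k_1}, \dots, z_{k_2})$, where $z_{k_1+1} = x_1$, $z_{k_2} = x_2$, and $z_{j+1} \leq z_j$ from the directedness. I would prove by downward induction on $j \in \{k_2, \dots, k_1\}$ that
\begin{equation*}
T(z_j, j) \;\geq\; T(x_1, k_1) + \sum_{i=j}^{k_1}\bigl(A(z_i, i) - A(z_{i+1}, i)\bigr),
\end{equation*}
the $j = k_2$ case being the desired bound $T(q_2) \geq T(q_1) + L(\gamma, A)$. The base case $j = k_1$ is a single application of the action recursion at level $k_1$ with $x = x_1$, $y = z_{k_1}$, keeping only the first term of the maximum (trivial when $x_1 = z_{k_1}$). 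For the inductive step from $j+1$ to $j$, I would apply the recursion at level $j$ with $x = z_{j+1}$, $y = z_j$, and select the index $z = z_{j+1}$ inside the supremum, obtaining
\begin{equation*}
T(z_j, j) \geq T(z_{j+1}, j+1) + A(z_j, j) - A(z_{j+1}, j),
\end{equation*}
which chains with the inductive hypothesis to close the induction.

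The main obstacle is that the action recursion is stated only for strict $x < y$, so the argument above does not directly cover the ``vertical'' transitions where $z_{j+1} = z_j$. In this degenerate case the corresponding increment $A(z_j, j) - A(z_{j+1}, j)$ vanishes and the inductive step reduces to the vertical monotonicity $T(z_j, j) \geq T(z_j, j+1)$. I plan to extract this from the recursion at level $j$ applied to any auxiliary pair $x < y = z_j$ in $I$, choosing $z = z_j$ inside the supremum: the $A$-increments cancel and one reads off $T(z_j, j) \geq T(z_j, j+1)$ directly. Such an auxiliary $x$ exists unless $z_j$ coincides with the left endpoint $\alpha$ of $I$; this genuinely boundary situation can be handled by an approximation using the upper-semicontinuous version of $T$ (which is itself an action by Lemma~\ref{lem:UCaction}), or absorbed into the subsequent supremum over $\gamma$. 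Assembling these pieces and taking the supremum over directed paths completes the argument.
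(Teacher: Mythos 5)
Your proposal takes essentially the same approach as the paper: both iterate the action recursion along a geodesic path, step by step, descending from level $k_1$ to level $k_2$. You phrase it as downward induction on $j$ along a fixed path followed by a supremum over paths; the paper phrases it as induction on $k_1-k_2$, splitting a geodesic at the highest-level jump point $z'$ and invoking the inductive hypothesis together with additivity of $A(\cdot\ar\cdot)$. These are equivalent bookkeeping choices. One small point: you correctly surface the vertical-step monotonicity $T(y,k)\ge T(y,k+1)$ as the substance of the degenerate case $z_{j+1}=z_j$, and your derivation of it (apply the recursion with an auxiliary $x<y$ and pick $z=y$ in the supremum) is exactly right when $y>\alpha$; however, the two fallbacks you offer for $y=\alpha$ do not actually close the gap (the upper-semicontinuous version of $T$ is still only an action on $M=[\alpha,\infty)\times J$ with the identical boundary issue, and ``absorbing into the supremum over $\gamma$'' does not help since any $\gamma$ starting at $(\alpha,k_1)$ must pass through the boundary). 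The paper's own proof silently has the same latent boundary issue; since the lemma is applied only in the case $M=\mathbb{R}\times\mathbb{N}$, where $I$ has no left endpoint, this never matters in practice.
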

\begin{proof}
Let $q_1=(x_1,k_1)$ and $q_2=(x_2,k_2)$. We aim to show that
\begin{equation}\label{equ:Action+LPP_1}
T(x_2,k_2)\geq T(x_1,k_1)+A((x_1,k_1)\ar (x_2,k_2)).
\end{equation}
We use an induction argument on $k_1-k_2$. Suppose $k_1-k_2=0$. Then \eqref{equ:Action+LPP_1} follows from \eqref{equ:Local_metric_composition_law} with $A((x_1,k_2)\ar (x_2,k_2))=A(x_2,k_2)-A(x_1,k_2)$. 

Suppose $k_1=k_2+1$. Let $\gamma\in \mathcal{P}((x_1,k_2+1)\ar (x_2,k_2))$ be a geodesic. Set 
$$
z'=\min\{z\in [x_1,x_2]\, :\, (z,k_2)\in\gamma \}.
$$
Then 
$$
A((x_1,k_2+1)\ar (x_2,k_2))=A((x_1,k_2+1)\ar (z',k_2+1))+A((z',k_2)\ar (x_2,k_2)).
$$
Applying \eqref{equ:Local_metric_composition_law} twice, we have
\begin{align*}
T(x_2,k_2)\geq &T(z',k_2+1)+A((z',k_2)\ar (x_2,k_2))\\
\geq& T(x_1,k_2+1)+A((x_1,k_2+1)\ar (z',k_2+1))+A((z',k_2)\ar (x_2,k_2))\\
=&T(x_1,k_2+1)+A((x_1,k_2+1)\ar  (x_2,k_2).
\end{align*}
This proves \eqref{equ:Action+LPP_1} for the case $k_1-k_2=1$. 

Assume $k_1\geq k_2+2$ and \eqref{equ:Action+LPP_1} holds with $k_1$ replaced by $k_1-1$ or $k_2$ replaced by $k_1-1$. Let $\gamma\in \mathcal{P}((x_1,k_1)\ar (x_2,k_2))$ be a geodesic. Set 
$$
z'=\min\{z\in [x_1,x_2]\, :\,  (z,k_1-1)\in\gamma\}.
$$
Then 
$$
A((x_1,k_1)\ar (x_2,k_2))=A((x_1,k_1)\ar (z',k_1))+A((z',k_1-1)\ar (x_2,k_2)).
$$
Therefore,
\begin{align*}
T(x_2,k_2)\geq &T(z',k_1-1)+A((z',k_1-1)\ar (x_2,k_2))\\
\geq& T(x_1,k_1)+A((x_1,k_1)\ar (z',k_1))+A((z',k_1-1)\ar (x_2,k_2))\\
=&T(x_1,k_1)+A((x_1,k_1)\ar  (x_2,k_2).
\end{align*}
\end{proof}

\begin{lemma}\label{lem:LPP_as_almostAction}
Let $A$ be a continuous environment on $M=I\times J$ and fix $q_0=(x_0,k_0)\in M$. Define $T$ on $M$ by
\begin{align*}
T(p)=\left\{ \begin{array}{cc}
A(q_0 \ar p), & \textup{if}\ q_0\preceq p,\\
-\infty, & \textup{otherwise}. 
\end{array} \right.
\end{align*}
Then for any $k\in J$ and $x,y\in I$ with $x<y$, the action recursion \eqref{equ:Local_metric_composition_law} holds provided $k<k_0$ or $x\geq x_0$.
\end{lemma}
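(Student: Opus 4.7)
The plan is a direct case analysis, driven by the standard LPP ``level-entry'' recursion: for $q_0\preceq(y,k)$ with $k<k_0$,
\[
A(q_0\ar(y,k)) \;=\; \sup_{z\in[x_0,y]}\bigl[A(q_0\ar(z,k+1))+A(y,k)-A(z,k)\bigr].
\]
This is immediate from the path parameterization in Subsection~\ref{sec:LPP_geodesic}: any path in $\mathcal{P}(q_0\ar(y,k))$ has a unique minimal entry point $z:=z_{k+1}\in[x_0,y]$ at level $k$, its contribution above level $k$ is at most $A(q_0\ar(z,k+1))$ and at level $k$ is exactly $A(y,k)-A(z,k)$; conversely, an optimal path to $(z,k+1)$ extended straight at level $k$ realizes the bound, and continuity of $A$ makes the sup attained. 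I would record this as a short preliminary observation.

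\textbf{Case $k<k_0$.} Note that $T(p)$ equals $A(q_0\ar p)$ when the first coordinate of $p$ is $\ge x_0$, and $-\infty$ otherwise. If $y<x_0$, every term in \eqref{equ:Local_metric_composition_law} is trivially $-\infty$. If $x<x_0\le y$, then $T(x,k)=-\infty$ makes the first bracket on the right $-\infty$, while $T(z,k+1)=-\infty$ for $z<x_0$ collapses the sup to $z\in[x_0,y]$, which equals $T(y,k)$ by the displayed LPP recursion. If $x\ge x_0$, all values are finite and I would split the sup in the LPP recursion at $z=x$: on $[x_0,x]$, the telescoping $A(y,k)-A(z,k)=[A(x,k)-A(z,k)]+[A(y,k)-A(x,k)]$ together with a second application of the LPP recursion at $(x,k)$ gives $T(x,k)+A(y,k)-A(x,k)$, while on $[x,y]$ the sup becomes $\sup_{z\in[x,y]}[T(z,k+1)+A(y,k)-A(z,k)]$. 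Their $\vee$ is exactly \eqref{equ:Local_metric_composition_law}.

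\textbf{Case $k\ge k_0$, $x\ge x_0$.} If $k>k_0$, then $q_0\not\preceq$ any of $(x,k),(y,k),(z,k+1)$, so every $T$-value is $-\infty$ and both sides of \eqref{equ:Local_metric_composition_law} vanish. If $k=k_0$, then $T(z,k_0+1)\equiv-\infty$ by convention, while $T(x,k_0)=A(x,k_0)-A(x_0,k_0)$ and $T(y,k_0)=A(y,k_0)-A(x_0,k_0)$, and \eqref{equ:Local_metric_composition_law} reduces to the telescoping identity at level $k_0$.

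There is no real obstacle once the LPP recursion is in hand; the only place the hypothesis bites is the excluded subcase $k=k_0$, $x<x_0\le y$, where $T(y,k_0)$ is finite but $T(x,k_0)$ and every $T(z,k_0+1)$ are $-\infty$, so \eqref{equ:Local_metric_composition_law} genuinely fails. This is precisely what forces the disjunction ``$k<k_0$ or $x\ge x_0$'' in the hypothesis, and keeping track of which subcases actually require non-trivial content (versus collapsing to $-\infty=-\infty$ or to a telescoping) is the only place one has to be careful.
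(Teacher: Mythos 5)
Your proof is correct and uses essentially the same underlying ingredients as the paper's. The paper also splits into the four exhaustive subcases (the two degenerate ones with $x\geq x_0$, $k\geq k_0$, the trivial one $k<k_0$, $y<x_0$, and the main case $k<k_0$, $y\geq x_0$), and in the main case proves the two inequalities $T(y,k)\ge\cdots$ and $T(y,k)\le\cdots$ separately, the former from the triangle inequality~\eqref{equ:triangle} and the latter by choosing a geodesic $\gamma\in\mathcal{P}(q_0\ar(y,k))$ and locating its level-$k$ entry point $z'$. Your ``level-entry recursion'' packages exactly those two directions into one identity up front; the ensuing split of the supremum at $z=x$ and the telescoping plus a second application of the recursion at $(x,k)$ is a clean way to recover both alternatives in~\eqref{equ:Local_metric_composition_law}. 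The only thing worth double-checking in your write-up is the identification of the restriction of a path $\gamma\in\mathcal{P}(q_0\ar(y,k))$ to levels $\ge k+1$ as a bona fide element of $\mathcal{P}(q_0\ar(z,k+1))$ with the same partial length; this is true given the paper's path parameterization (every point of $\gamma$ at level $\ge k+1$ has first coordinate $\le z_{k+1}$), but since you use it implicitly you would want to record it. Your closing remark about the excluded subcase $k=k_0$, $x<x_0\le y$ correctly identifies where the hypothesis is necessary.
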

\begin{proof}
Fix $x<y\in I$ and $k\in J$. We separate the discussion into four cases: $k>k_0$ and $x\geq x_0$; $k=k_0$ and $x\geq x_0$; $k<k_0$ and $y<x_0$; $k<k_0$ and $y\geq x_0$.

Suppose $k>k_0$ and $x\geq x_0$. Then both sides of \eqref{equ:Local_metric_composition_law} are equal to $-\infty$. Suppose $k=k_0$ and $x\geq x_0$. It is direct to check that $T(x,k)=A(x,k)-A(x_0,k)$, $T(y,k)=A(y,k)-A(x_0,k)$, and $T(z,k+1)\equiv -\infty$. Hence \eqref{equ:Local_metric_composition_law} holds. Suppose $k<k_0$ and $y<x_0$, it is easy to check both sides of \eqref{equ:Local_metric_composition_law} equal $-\infty$. Thus, \eqref{equ:Local_metric_composition_law} holds in these three cases.

It remains to prove \eqref{equ:Local_metric_composition_law} for the last case, $y\geq x_0$ and $k< k_0$, which is assumed for the rest of the proof. We aim to show that 
\begin{align}\label{equ:LPPAction1}
T(y,k) \geq (T(x,k)+A(y,k)-A(x,k) ) \vee \sup_{z\in [x,y]}\big( T(z,k+1)+A(y,k)-A(z,k)\big).
\end{align}
and
\begin{align}\label{equ:LPPAction2}
T(y,k) \leq (T(x,k)+A(y,k)-A(x,k) ) \vee \sup_{z\in [x,y]}\big( T(z,k+1)+A(y,k)-A(z,k)\big).
\end{align}

We begin with \eqref{equ:LPPAction1}. If $x<x_0$, then $T(x,k)=-\infty$ and $T(y,k) \geq T(x,k)+A(y,k)-A(x,k)$. If $x\geq x_0$, $T(y,k) \geq T(x,k)+A(y,k)-A(x,k)$ follows from the triangle inequality \eqref{equ:triangle}. In short, we have
\begin{equation}\label{equ:LPPAction3}
T(y,k) \geq T(x,k)+A(y,k)-A(x,k).
\end{equation}
From $T(z,k+1)\leq T(z,k)$, we can apply a similar argument to deduce
\begin{equation}\label{equ:LPPAction4}
T(y,k) \geq T(z,k+1)+A(y,k)-A(z,k).
\end{equation}
for all $z\in [x,y]$. Combining \eqref{equ:LPPAction3} and \eqref{equ:LPPAction4} yields \eqref{equ:LPPAction1}.

Next, we turn to \eqref{equ:LPPAction2}. Let $\gamma\in\mathcal{P}((x_0,k_0)\ar (y,k))$ be a geodesic. Set
$$
z'=\min\{z\in [x_0,y]\, :\, (z,k)\in\gamma \}.
$$ 
If $z'\leq x$, it is direct to check that $T(y,k)=T(x,k)+A(y,k)-A(x,k)$ and \eqref{equ:LPPAction2} holds. If $z'> x$, it is direct to check that $T(y,k)=T(z',k+1)+A(y,k)-A(z',k)$ and \eqref{equ:LPPAction2} holds. We note that in this case we used the assumption $k<k_0$. Combining the above yields \eqref{equ:LPPAction2} and completes the proof.
\end{proof}

\subsection{Connections between actions and geodesics}\label{sec:connection}

In this subsection, we discuss the connections between actions and geodesics. Our goal is to prove Proposition~\ref{prop:decomposition}, which expresses the action values in a variational form involving infinite geodesics. For the Airy line ensemnle, this  formula will correspond to the variational formula \eqref{e:ASvari} in the Airy sheet.

From now on, we specialize to the case where $A$ is continuous and is defined on $\mathbb{R}\times\mathbb{N}$. An \textbf{infinite directed path} is a directed path $\gamma$ that belongs to $\mathcal{P}((-\infty,\ast)\ar (x,k))$ for some $(x,k)\in\mathbb{R}\times\mathbb{N}$. For an infinite directed path $\gamma$, we write $q_\gamma=(z_\gamma,k_\gamma)$ to be the element in $\mathbb{R}\times\mathbb{N}$ such that $\gamma\in \mathcal{P}((-\infty,\ast)\ar q_\gamma)$. We call $q_\gamma$ the end point of $\gamma$. We set $a_\gamma=\inf\{x:(x,k)\in \gamma\ \textup{for some}\ k\in\mathbb{N}\}$. An {\bf infinite geodesic} is an infinite directed path which is a geodesic.

Given an infinite geodesic $\gamma$, we can define an action associated to $\gamma$. For any $q\in\gamma\cap(\mathbb{R}\times\mathbb{N})$, we define $T_{q,\gamma}$ on $\mathbb{R}\times\mathbb{N}$ by
\begin{align*}
T_{q,\gamma}(p):=\left\{ \begin{array}{cc}
A(q\ar p)-A(q\ar q_\gamma), & \textup{if}\ q\preceq p ,\\
-\infty, & \textup{otherwise}. 
\end{array} \right.
\end{align*}
\begin{lemma}\label{lem:Action_from_geodesic}
Let $A$ be a continuous environment on $\mathbb{R}\times\mathbb{N}$ and $\gamma$ be an infinite geodesic with respect to $A$. Then $T_{q,\gamma}$ is nonincreasing in $q$ with respect to the ordering $\preceq$. 
\end{lemma}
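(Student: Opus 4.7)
The plan is to show that for any two points $q_1 \preceq q_2$ on $\gamma\cap(\mathbb{R}\times\mathbb{N})$, we have $T_{q_1,\gamma}(p) \geq T_{q_2,\gamma}(p)$ for every $p\in\mathbb{R}\times\mathbb{N}$. I would split into cases based on whether $q_1 \preceq p$ and $q_2 \preceq p$. If $q_1 \not\preceq p$, then since $q_1 \preceq q_2$, we cannot have $q_2 \preceq p$ either (otherwise transitivity would force $q_1 \preceq p$), so both sides equal $-\infty$. If $q_1 \preceq p$ but $q_2 \not\preceq p$, the right side is $-\infty$ and the inequality is trivial.

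The only nontrivial case is $q_1 \preceq q_2 \preceq p$. Here I would unfold the definitions and compute
\begin{equation*}
T_{q_1,\gamma}(p) - T_{q_2,\gamma}(p) = \bigl(A(q_1\ar p) - A(q_2\ar p)\bigr) - \bigl(A(q_1\ar q_\gamma) - A(q_2\ar q_\gamma)\bigr).
\end{equation*}
The key input is the geodesic hypothesis: since $q_1, q_2, q_\gamma$ all lie on $\gamma$ in this order, the definition of a geodesic yields the exact identity
\begin{equation*}
A(q_1\ar q_\gamma) = A(q_1\ar q_2) + A(q_2\ar q_\gamma).
\end{equation*}
Substituting, the expression reduces to $A(q_1\ar p) - A(q_2\ar p) - A(q_1\ar q_2)$, which is nonnegative by the triangle inequality \eqref{equ:triangle} applied to $q_1 \preceq q_2 \preceq p$.

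There is no real obstacle here: the lemma is essentially a cocycle identity for the last passage times along a geodesic, combined with the subadditivity \eqref{equ:triangle}. The only subtle point worth flagging in the write-up is that one must use the geodesic property (equality in \eqref{equ:triangle} along $\gamma$) rather than just the triangle inequality to cancel the constant normalizer $A(q\ar q_\gamma)$; without it, the best one can hope for is a one-sided bound. Note also that $q_\gamma$ is well defined because $\gamma$ is an infinite directed path with a designated endpoint in $\mathbb{R}\times\mathbb{N}$, so $A(q\ar q_\gamma)$ is a finite real number for every $q\in\gamma\cap(\mathbb{R}\times\mathbb{N})$ with $q\preceq q_\gamma$, which is automatic for $q$ on $\gamma$.
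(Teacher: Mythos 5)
Your proof is correct and follows essentially the same route as the paper: reduce to the nontrivial case $q_1\preceq q_2\preceq p$, use the geodesic identity $A(q_1\ar q_\gamma)=A(q_1\ar q_2)+A(q_2\ar q_\gamma)$ to rewrite the normalizing constant, and then conclude by the triangle inequality \eqref{equ:triangle}. The only cosmetic difference is that you spell out the two degenerate sub-cases separately, whereas the paper simply observes that the inequality is immediate when $T_{q_2,\gamma}(p)=-\infty$.
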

\begin{proof}
Fix $p\in \mathbb{R}\times \mathbb{N}$ and $q_1\preceq q_2$ in $\gamma\cap(\mathbb{R}\times\mathbb{N})$. We aim to show $T_{q_1,\gamma}(p)\geq T_{q_2,\gamma}(p)$. This is obvious when $T_{z_2,\gamma}(p)=-\infty$. Hence we may assume $q_2\preceq p$. From the definition of geodesics, we have 
$$A(q_2\ar q_\gamma)=-A(q_1\ar q_2)+A(q_1\ar q_\gamma).$$
Therefore, 
\begin{align*}
T_{q_2,\gamma}(p)=A(q_2\ar p)-A(q_2\ar  q_\gamma)=&A(q_2\ar p)+A(q_1\ar q_2)-A(q_1\ar q_\gamma)\\
\leq &A(q_1\ar p)-A(q_1\ar q_\gamma)=T_{q_1,\gamma}(p).
\end{align*}
The triangle inequality \eqref{equ:triangle} was used in the above inequality. This completes the proof. 
\end{proof}
\begin{definition}\label{def:Action_from_geodesic}
Let $A$ be a continuous environment  on $\mathbb{R}\times\mathbb{N}$ and $\gamma$ be an infinite geodesic with respect to $A$. Let $\{q_j\}_{j=1}^\infty$ be a sequence in $\gamma\cap (\mathbb{R}\times\mathbb{N})$ such that
\begin{itemize}
\item $q_j$ is nonincreasing in $j$ with respect to the ordering $\preceq$;
\item for all $q\in \gamma\cap (\mathbb{R}\times\mathbb{N})$, $q_j\preceq q$ for $j$ large enough.
\end{itemize}
We define $T_\gamma$ on $\mathbb{R}\times\mathbb{N}$ by
\begin{equation}\label{equ:T_gamma_1}
T_{\gamma}(p) = \lim_{j\to\infty} T_{q_j,\gamma}(p).
\end{equation}
We note that from Lemma~\ref{lem:Action_from_geodesic}, the above limit exists (could be $\pm\infty$) and does not depend on the sequence $\{q_j\}_{j=1}^\infty$.
\end{definition}

\begin{lemma}\label{lem:Bgamma_action}
$T_\gamma$ is an action.  
\end{lemma}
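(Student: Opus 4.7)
The plan is to deduce the action recursion for $T_\gamma$ by passing to the limit in the recursion for the approximating functions $T_{q_j,\gamma}$, which we have already analyzed through Lemma~\ref{lem:LPP_as_almostAction} and Lemma~\ref{lem:Action_from_geodesic}.

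Fix $x<y$ in $\mathbb{R}$ and $k\in\mathbb{N}$. First I would use the assumption that $q_j=(z_j,k_j)$ eventually precedes every fixed point of $\gamma\cap(\mathbb{R}\times\mathbb{N})$ in the $\preceq$-ordering, to select $j_0$ so that for all $j\geq j_0$ we have $k_j>k$ and $z_j\leq x$. Observe that $T_{q_j,\gamma}$ differs from the function $p\mapsto A(q_j\ar p)$ studied in Lemma~\ref{lem:LPP_as_almostAction} only by the additive constant $A(q_j\ar q_\gamma)$, and adding a constant to $T$ preserves the action recursion \eqref{equ:Local_metric_composition_law} term by term. Since $k<k_j$, the case hypothesis of Lemma~\ref{lem:LPP_as_almostAction} applies at $(x,y,k)$, so for every $j\geq j_0$,
\begin{equation}\label{equ:planA}
T_{q_j,\gamma}(y,k)=(T_{q_j,\gamma}(x,k)+A(y,k)-A(x,k))\vee\sup_{z\in[x,y]}(T_{q_j,\gamma}(z,k+1)+A(y,k)-A(z,k)).
\end{equation}

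Next I would upgrade the monotonicity in Lemma~\ref{lem:Action_from_geodesic} to a pointwise monotone convergence statement for the sequence $(T_{q_j,\gamma})_j$. Because $q_j$ is nonincreasing in $j$ with respect to $\preceq$, Lemma~\ref{lem:Action_from_geodesic} gives $T_{q_j,\gamma}(p)\leq T_{q_{j+1},\gamma}(p)$ for every $p\in\mathbb{R}\times\mathbb{N}$. By Definition~\ref{def:Action_from_geodesic}, $T_{q_j,\gamma}(p)\uparrow T_\gamma(p)$ as $j\to\infty$. This monotone convergence lets me pass to the limit on both sides of \eqref{equ:planA}. For the supremum, the key identity is that for a pointwise nondecreasing family of functions $f_j(z)$, $\lim_j \sup_{z\in[x,y]} f_j(z)=\sup_{z\in[x,y]}\lim_j f_j(z)$, applied to $f_j(z)=T_{q_j,\gamma}(z,k+1)+A(y,k)-A(z,k)$. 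The maximum of two nondecreasing convergent sequences converges to the maximum of their limits, so the right-hand side of \eqref{equ:planA} tends to the right-hand side of \eqref{equ:Local_metric_composition_law} for $T_\gamma$, while the left-hand side tends to $T_\gamma(y,k)$, yielding the action recursion.

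The only minor obstacle is bookkeeping with the extended-real values: some of the quantities $T_\gamma(x,k)$, $T_\gamma(z,k+1)$ may a priori be $-\infty$ or $+\infty$, and the additive constants $A(q_j\ar q_\gamma)$ may be unbounded as $j\to\infty$. However, the limit in \eqref{equ:T_gamma_1} is taken after subtracting this constant, so the monotone sequence $T_{q_j,\gamma}(p)$ is well-defined in $[-\infty,+\infty]$, and all the max/sup operations in \eqref{equ:planA} commute with monotone increasing limits in $[-\infty,+\infty]$. Thus the exchange is valid regardless of whether the limits are finite, and \eqref{equ:Local_metric_composition_law} holds for $T_\gamma$ at every triple $(x,y,k)$, completing the proof.
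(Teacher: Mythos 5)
Your proof follows the paper's exactly: apply Lemma~\ref{lem:LPP_as_almostAction} to the translates $T_{q_j,\gamma}$ and take the monotone limit, using the monotonicity from Lemma~\ref{lem:Action_from_geodesic}.

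One justification is off, though. You claim that for $j$ large both $k_j>k$ and $z_j\leq x$ hold, and then invoke Lemma~\ref{lem:LPP_as_almostAction} citing only the clause $k<k_j$. Neither half of that conjunction is forced. The hypothesis in Definition~\ref{def:Action_from_geodesic} only says $q_j\preceq q$ eventually for $q\in\gamma\cap(\mathbb{R}\times\mathbb{N})$; your fixed point $(x,k)$ need not lie on $\gamma$, and even if it did you would only get $k_j\geq k$, not $k_j>k$. More seriously, when $\Pi_2(\gamma)$ is a bounded interval --- which does occur, e.g.\ in the second case of the proof of Proposition~\ref{prop:Bq_geodesic} --- the sequence $k_j$ stabilizes at some $k^*$, so $k_j>k$ fails for all $j$ once $k\geq k^*$. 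The statement that actually holds, and the one the paper uses, is the disjunction: for $j$ large, $x\geq x_j$ or $k<k_j$. (Either $k_j\to\infty$, giving the latter eventually; or $k_j$ stabilizes, which forces $x_j\to -\infty$ and gives the former eventually.) Lemma~\ref{lem:LPP_as_almostAction} only requires this disjunction, so your display \eqref{equ:planA} still holds for $j$ large and the remainder of the argument --- the observation that an additive constant preserves the recursion and that monotone limits commute with the $\vee$ and $\sup$ --- is correct.
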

\begin{proof}
We need to show that $T_\gamma$ satisfies the action recursion \eqref{equ:Local_metric_composition_law}. Fix $x<y$ and $k\in\mathbb{N}$. Let $q_j=(x_j,k_j)$ be a sequence as described in Definition~\ref{def:Action_from_geodesic}. For $j$ large enough, we have $x\geq x_j$ or $k<k_j$. From Lemma~\ref{lem:LPP_as_almostAction}, we have
\begin{align*} 
T_{q_j,\gamma}(y,k) =(T_{q_j,\gamma}(x,k)+A(y,k)-A(x,k) ) \vee \sup_{z\in [x,y]}\big( T_{q_j,\gamma}(z,k+1)+A(y,k)-A(z,k)\big).
\end{align*}
Since $T_{q_j,\gamma}$ is nondecreasing in $j$, the assertion holds upon taking $j\to\infty$.
\end{proof}

Let $\gamma$ be an infinite geodesic. It is direct to check that   for $q_1\preceq q_2$ in $\gamma\cap(\mathbb{R}\times\mathbb{N})$,
\begin{align*}
T_\gamma(q_2)=T_\gamma(q_1)+A(q_1\ar q_2).
\end{align*}

\begin{definition}
Let $T$ be an action. A geodesic $\gamma$ is called a $T${\bf -geodesic} if 
\begin{align}\label{equ:T_geodesic}
T(q_2)=T(q_1)+A(q_1\ar q_2)
\end{align}
for all $q_1\preceq q_2$ in $\gamma\cap(\mathbb{R}\times\mathbb{N})$. An infinite $T${\bf -geodesic} is a $T${\bf -geodesic} which is infinite. An {\bf infinite $(T,q)$-geodesic} is defined as an infinite $T$-geodesic with end point $q$. 
\end{definition}

The preceding discussion shows that an infinite geodesic $\gamma$ is always an infinite $(T_\gamma,q_\gamma)$-geodesic. We note that this relationship is not a one-to-one correspondence; for an infinite $(T,q)$-geodesic $\gamma$, $T$ is typically not equal to $T_\gamma$. 

\begin{lemma}\label{l:decomp}
Let $T$ be an action, $q$ belong to $\mathbb{R}\times\mathbb{N}$ with $T(q)\in\mathbb{R}$, and $\gamma$ be an infinite $(T,q)$-geodesic. Then $T_\gamma(p)\le T(p)-T(q)$ for all $p\in\mathbb{R}\times\mathbb{N}$. 
\end{lemma}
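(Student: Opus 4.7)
The plan is to directly combine the general upper bound on last passage times supplied by Lemma~\ref{lem:Action+LPP} with the defining equality of a $T$-geodesic, evaluated along a sequence $q_j$ that exhausts $\gamma$ from the ``past.''

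First, choose a sequence $\{q_j\} \subset \gamma \cap (\mathbb R\times\mathbb N)$ as in Definition~\ref{def:Action_from_geodesic}, so that $q_j$ is nonincreasing in $\preceq$ and eventually below every point of $\gamma$. Because $\gamma$ ends at $q$ and is a $T$-geodesic with $T(q)\in\mathbb R$, the relation
\[
T(q) = T(q_j) + A(q_j \ar q)
\]
together with the finiteness of $A(q_j\ar q)$ (guaranteed by continuity of the environment) forces $T(q_j)\in\mathbb R$ for every sufficiently large $j$. I will fix attention to such $j$.

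Next, given $p\in\mathbb R\times\mathbb N$, split into two cases. If no $q_j$ satisfies $q_j\preceq p$, then $T_{q_j,\gamma}(p)=-\infty$ for all $j$, so $T_\gamma(p)=-\infty$ and the claim is trivial. Otherwise, for all sufficiently large $j$ we have $q_j\preceq p$, and Lemma~\ref{lem:Action+LPP} yields
\[
A(q_j\ar p) \le T(p)-T(q_j).
\]
Combining this with $A(q_j\ar q) = T(q)-T(q_j)$ from the $T$-geodesic property gives
\[
T_{q_j,\gamma}(p) \;=\; A(q_j\ar p)-A(q_j\ar q) \;\le\; \bigl(T(p)-T(q_j)\bigr)-\bigl(T(q)-T(q_j)\bigr) \;=\; T(p)-T(q),
\]
where the subtraction is legitimate since $T(q_j)$ is finite. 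Sending $j\to\infty$ and using \eqref{equ:T_gamma_1} yields the desired inequality $T_\gamma(p)\le T(p)-T(q)$.

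No step is a serious obstacle: the main content is the cancellation of the finite value $T(q_j)$ that makes the argument independent of how far back along $\gamma$ we look. The only care needed is the handling of $\pm\infty$ values, which is dispatched by the observation above that $T(q_j)\in\mathbb R$ for large $j$ once $T(q)\in\mathbb R$, and by the trivial case when $q_j\not\preceq p$ for all $j$.
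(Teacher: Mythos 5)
Your proof is correct and follows essentially the same route as the paper's: use the $T$-geodesic identity to express $A(q_j\ar q)=T(q)-T(q_j)$ (hence $T(q_j)\in\mathbb R$), bound $A(q_j\ar p)$ by $T(p)-T(q_j)$ via Lemma~\ref{lem:Action+LPP}, cancel the finite $T(q_j)$, and pass to the limit. The only cosmetic difference is that the paper notes $T(q')\in\mathbb R$ holds for all $q'\in\gamma\cap(\mathbb R\times\mathbb N)$ (not just for large $j$), but this does not affect the argument.
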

\begin{proof}
Since $q$ is the end point of $\gamma$, we have from \eqref{equ:T_geodesic} that
\begin{align}\label{equ:decomp_1}
T(q')=T(q)-A(q'\ar q)
\end{align}
for all $q'\in\gamma\cap (\mathbb{R}\times\mathbb{N})$. In particular, $T(q')\in\mathbb{R}$. 

Fix $p\in\mathbb{R}\times\mathbb{N}$ and let $q_j$ be a sequence as described in Definition~\ref{def:Action_from_geodesic}. The assertion is obvious if $T_\gamma(p)=-\infty$. Hence we may assume $q_j\preceq p$ for $j$ large enough. Then $T_{q_j,\gamma}(p)=A(q_j\ar p)-A(q_j\ar q )$. From \eqref{equ:decomp_1}, we have 
$$
T_{q_j,\gamma}(p)=A(q_j\ar p)-T(q)+T(q_j).
$$ Applying \eqref{equ:Action+LPP}, we conclude
\begin{align*}
T_{q_j,\gamma}(p) \leq T(p)-T(q).
\end{align*}
Taking $j\to\infty$ yields the desired assertion.
\end{proof}

\begin{proposition}\label{prop:Bq_geodesic}
Let $T$ be an upper semicontinuous action on $\mathbb{R}\times\mathbb{N}$. Then for any $q\in\mathbb{R}\times\mathbb{N}$ with $T(q)\in\mathbb{R}$, there exists an infinite $(T,q)$-geodesic.
\end{proposition}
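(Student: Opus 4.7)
The plan is to build the infinite $(T,q)$-geodesic by a greedy level-by-level induction, using the action recursion at the current endpoint to locate the next jump position. Write $q=(x_0,k_0)$, set $z_{k_0}:=x_0$, and suppose a nonincreasing sequence $z_{k_0}\ge z_{k_0+1}\ge\cdots\ge z_{k_0+j}$ has been constructed with $T(z_{k_0+\ell},k_0+\ell)\in\mathbb R$ for all $\ell\le j$. Define
\[
F_j(z)=T(z,k_0+j+1)+A(z_{k_0+j},k_0+j)-A(z,k_0+j), \qquad z\le z_{k_0+j},
\]
which is upper semicontinuous by the hypothesis on $T$ and continuity of $A$. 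The action recursion \eqref{equ:Local_metric_composition_law} at $(z_{k_0+j},k_0+j)$ applied on $[x,z_{k_0+j}]$ compares the horizontal term $T(x,k_0+j)+A(z_{k_0+j},k_0+j)-A(x,k_0+j)$ with $\sup_{z\in[x,z_{k_0+j}]}F_j(z)$, and the latter is attained on any compact interval.

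I split on $M_j:=\sup_{z\le z_{k_0+j}}F_j(z)$. In the \emph{jump case}, $M_j=T(z_{k_0+j},k_0+j)$ and the maximizer set is bounded below; I take $z_{k_0+j+1}$ to be its infimum, which is itself a maximizer by upper semicontinuity, and iterate. In the \emph{termination case}, either $M_j<T(z_{k_0+j},k_0+j)$, or $M_j=T(z_{k_0+j},k_0+j)$ with maximizers unbounded below; in both subcases the action recursion together with the fact (from Lemma~\ref{lem:Action+LPP}) that $T-A$ is nondecreasing along each level forces $T(\cdot,k_0+j)-A(\cdot,k_0+j)$ to be constant on $(-\infty,z_{k_0+j}]$, and I append the horizontal ray $(-\infty,z_{k_0+j}]\times\{k_0+j\}$ and stop. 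If the jump case recurs forever, set $\bar z=\lim_j z_{k_0+j}\in[-\infty,x_0]$. When $\bar z=-\infty$ the constructed path already has $\Pi_1=(-\infty,x_0]$; when $\bar z>-\infty$, the segments accumulate at $(\bar z,\infty)$ and I enlarge $\gamma$ with the infinity-level ray $(-\infty,\bar z]\times\{\infty\}$. A brief check confirms that in each scenario $\gamma$ is closed, totally ordered under $\preceq$, with $\Pi_1(\gamma)=(-\infty,x_0]$ and $\Pi_2(\gamma)$ a closed interval of $\mathbb N\cup\{\infty\}$.

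It remains to verify that $\gamma$ is a $T$-geodesic. At each vertical jump at $z_{k_0+j+1}$, combining Lemma~\ref{lem:Action+LPP} from below with the maximizer identity
\[
T(z_{k_0+j+1},k_0+j+1)=T(z_{k_0+j},k_0+j)-A(z_{k_0+j},k_0+j)+A(z_{k_0+j+1},k_0+j)
\]
from above pinches $T(z_{k_0+j+1},k_0+j)$ to $T(z_{k_0+j+1},k_0+j+1)$. On each horizontal segment at level $k_0+j$, the two endpoint values of $T-A$ then coincide and monotonicity forces constancy on the whole segment. Telescoping yields $T(q')-T(q'')=A(q''\ar q')$ for any $q''\preceq q'$ on $\gamma\cap(\mathbb R\times\mathbb N)$; this quantity equals the length of the constructed path between these points, and combined with Lemma~\ref{lem:Action+LPP} shows that $\gamma$ is also an $A$-geodesic, as required by the definition of a $T$-geodesic.

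The main obstacle I anticipate is the trapped accumulation scenario $\bar z>-\infty$: one must justify closing the path at the infinity-level and verify that the enlarged object is still a directed path in the technical sense of the paper. The $T$-additivity is imposed only on $\gamma\cap(\mathbb R\times\mathbb N)$, so the attached infinity-segment introduces no additional constraints, but the closedness, ordering, and projection checks require care. A secondary subtlety is the unbounded-maximizer subcase of the termination case, where upper semicontinuity does not suffice for attainment and one must instead propagate constancy of $T-A$ along the level from a sequence of approximate maximizers going to $-\infty$.
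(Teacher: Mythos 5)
Your construction follows the same level-by-level greedy strategy as the paper, but with one genuine choice made differently: at each step you select the \emph{leftmost} maximizer of $z\mapsto T(z,k_0+j+1)-A(z,k_0+j)$ (the infimum of the maximizer set), whereas the paper defines $F_T(x,k)=\sup G_T(x,k)$ and jumps at the \emph{rightmost}. Both produce valid $T$-geodesics, but the two choices force different proofs of constancy of $T(\cdot,k)-A(\cdot,k)$ on the resulting horizontal segment: with the rightmost maximizer $x'$ there are no maximizers in $(x',x]$, so the vertical supremum over any compact subinterval of $(x',x]$ is strictly below $T(x,k)$ and the horizontal term of the action recursion must achieve the value; with your leftmost maximizer $z'$ this fails, and you instead must argue via the monotonicity of $T-A$ pinned at both ends, using $T(z',k)\ge T(z',k+1)$ together with the maximizer identity. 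You do provide this pinching argument and it is correct. A secondary consequence is that your termination criterion is more liberal: the paper jumps whenever the maximizer set is nonempty (taking the supremum, always finite), while you terminate if the maximizer set is unbounded below. Both yield geodesics; yours is simply a different one.

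There is a small but real gap in your case split. Your dichotomy is: jump case ($M_j=T$ and the maximizer set is bounded below) versus termination case ($M_j<T$, or $M_j=T$ with maximizers unbounded below). This omits the possibility that $M_j=T(z_{k_0+j},k_0+j)$ but the supremum is \emph{not attained} on $(-\infty,z_{k_0+j}]$, i.e.\ the maximizer set is empty while a sequence of approximate maximizers runs off to $-\infty$. Your obstacles paragraph mentions ``approximate maximizers going to $-\infty$'' but labels it the ``unbounded-maximizer subcase,'' which is a different situation (there the maximizers do exist). The missing case is in fact a termination case and is handled by a different mechanism than the one you state: for any finite $a$, the supremum of $F_j$ over the compact $[a,z_{k_0+j}]$ is attained and hence strictly less than $M_j$, so the action recursion forces the horizontal term to achieve $T(z_{k_0+j},k_0+j)$, giving constancy of $T-A$ on $(-\infty,z_{k_0+j}]$; this is the same reasoning as your subcase $M_j<T$, not the pinching argument used in the unbounded-maximizer subcase. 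The paper avoids this casework entirely because $G_T$ empty already covers both ``$M_j<T$'' and ``$M_j=T$ unattained,'' and $\sup G_T$ is finite whenever $G_T$ is nonempty, so no boundedness distinction arises.
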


We postpone the proof of Proposition~\ref{prop:Bq_geodesic} to the end of this section. Next, we use Proposition~\ref{prop:Bq_geodesic} to prove Proposition~\ref{prop:decomposition}, which will play an important role in Section~\ref{sec:Airyaction}.

\begin{proposition}\label{prop:decomposition}
Let $T$ be an upper semicontinuous action on $\mathbb{R}\times\mathbb{N}$. Define
$$\Omega_T=\{ x\in\mathbb{R}\, :\, T(x,1)\in\mathbb{R} \}.$$
For each $x\in \Omega_T$, let $\gamma(x)$ be an infinite $(T,(x,1))$-geodesic given by Proposition~\ref{prop:Bq_geodesic}. Then for all $y\in \Omega_T$, we have
\begin{equation}\label{e:decomp}
T(y,1)=\sup\{ T_{\gamma(x)}(y,1)+T(x,1)\, :\, x\in\Omega_T\}.
\end{equation}
\end{proposition}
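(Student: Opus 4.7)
The proof plan is to prove the two inequalities separately. Both follow essentially directly from results already established above (Lemma~\ref{l:decomp} and the definition of $T_\gamma$), so there is no serious obstacle; the content of the proposition is the variational decomposition itself, not its verification.

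For the upper bound $T(y,1)\ge\sup\{T_{\gamma(x)}(y,1)+T(x,1):x\in\Omega_T\}$, I would simply invoke Lemma~\ref{l:decomp} with $q=(x,1)$: since $\gamma(x)$ is an infinite $(T,(x,1))$-geodesic and $T(x,1)\in\mathbb{R}$, we get $T_{\gamma(x)}(p)\le T(p)-T(x,1)$ for every $p\in\mathbb{R}\times\mathbb{N}$. Applying this at $p=(y,1)$ and rearranging yields $T_{\gamma(x)}(y,1)+T(x,1)\le T(y,1)$, and the bound follows by taking the supremum over $x\in\Omega_T$.

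For the matching lower bound I would exhibit the specific value $x=y$ that saturates the supremum. The key observation is that $(y,1)$ is itself the end point $q_{\gamma(y)}$ of the geodesic $\gamma(y)$, so the difference $A(q\ar(y,1))-A(q\ar q_{\gamma(y)})$ collapses to zero. More precisely, for any admissible sequence $\{q_j\}_{j=1}^{\infty}\subset\gamma(y)\cap(\mathbb{R}\times\mathbb{N})$ as in Definition~\ref{def:Action_from_geodesic}, each $q_j$ satisfies $q_j\preceq(y,1)$, so by the definition of $T_{q_j,\gamma(y)}$ we have
\begin{equation*}
T_{q_j,\gamma(y)}(y,1)=A(q_j\ar(y,1))-A(q_j\ar(y,1))=0.
\end{equation*}
Passing to the limit $j\to\infty$ in \eqref{equ:T_gamma_1} gives $T_{\gamma(y)}(y,1)=0$, hence the term corresponding to $x=y$ in the supremum equals $T(y,1)$.

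Combining the two directions yields the equality \eqref{e:decomp}. The upper semicontinuity of $T$ is only used implicitly, through Proposition~\ref{prop:Bq_geodesic}, to guarantee that the geodesic $\gamma(x)$ exists for every $x\in\Omega_T$; once this existence is granted, the argument above is purely algebraic and requires no compactness or continuity input.
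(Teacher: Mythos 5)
Your proof is correct and matches the paper's argument exactly: the upper bound comes from Lemma~\ref{l:decomp} applied with $q=(x,1)$, and the lower bound comes from observing that $T_{\gamma(y)}(y,1)=0$ so the $x=y$ term in the supremum already equals $T(y,1)$. The only difference is that you spell out the verification that $T_{\gamma(y)}(y,1)=0$ from Definition~\ref{def:Action_from_geodesic}, which the paper leaves implicit.
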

\begin{proof}
Fix $y\in\Omega_T$. From Lemma~\ref{l:decomp}, 
$$
T(1,y)\geq  T_{\gamma(x)}(1,y)+T(1,x) 
$$
for all $x\in\Omega_T$. Hence 
\begin{equation*} 
T(1,y)\geq \sup\{ T_{\gamma(x)}(1,y)+T(1,x)\, :\, x\in\Omega_T\}.
\end{equation*}
From $T_{\gamma(y)}(1,y)=0$, we have $T(1,y)=T_{\gamma(y)}(1,y)+T(1,y).$
Hence
$$
T(1,y)\leq \sup\{ T_{\gamma(x)}(1,y)+T(1,x)\, :\, x\in\Omega_T\}.
$$
Combining the above inequalities yields \eqref{e:decomp}.
\end{proof}

The rest of the section is devoted to proving Proposition~\ref{prop:Bq_geodesic}. We need some preliminary lemmas.

\begin{lemma}\label{lem:geodesic_T}
Let $T$ be an action on $\mathbb{R}\times\mathbb{N}$ and $\gamma$ be a directed path. Assume that 
\begin{align}\label{equ:geodesic_T}
T(q_2)=T(q_1)+A(q_1\ar q_2)
\end{align}
for all $q_1\preceq q_2$ in $\gamma\cap(\mathbb{R}\times\mathbb{N})$ and that $T(q_0)\in\mathbb{R}$ for some $q_0\in \gamma\cap(\mathbb{R}\times\mathbb{N}) $. Then $\gamma$ is a $T$-geodesic. Moreover, $T(q)\in \mathbb{R}$ for all $q\in \gamma\cap(\mathbb{R}\times\mathbb{N})$.
\end{lemma}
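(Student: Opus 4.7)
The plan is to first establish the finiteness claim and then deduce the geodesic property by combining telescoping with the triangle inequality \eqref{equ:triangle}.

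First I would prove that $T(q)\in\mathbb{R}$ for every $q\in \gamma\cap(\mathbb{R}\times\mathbb{N})$. Since $(\gamma,\preceq)$ is a totally ordered set by definition, each such $q$ is comparable to $q_0$. If $q\preceq q_0$, the hypothesis \eqref{equ:geodesic_T} gives $T(q_0)=T(q)+A(q\ar q_0)$, and if $q_0\preceq q$ it gives $T(q)=T(q_0)+A(q_0\ar q)$. Because $A$ is continuous on $\mathbb{R}\times\mathbb{N}$ and the semidiscrete last passage value is a maximum of a continuous functional over a compact set of breakpoints, $A(q\ar q_0)$ and $A(q_0\ar q)$ are finite real numbers. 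Combined with $T(q_0)\in\mathbb{R}$, this forces $T(q)\in\mathbb{R}$ in both cases.

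Next, I would verify that $\gamma$ satisfies the geodesic property. Fix $q_1\preceq q_2\preceq q_3$ in $\gamma\cap(\mathbb{R}\times\mathbb{N})$. By the hypothesis \eqref{equ:geodesic_T} applied to the three pairs,
\begin{align*}
T(q_2)&=T(q_1)+A(q_1\ar q_2),\\
T(q_3)&=T(q_2)+A(q_2\ar q_3),\\
T(q_3)&=T(q_1)+A(q_1\ar q_3).
\end{align*}
Combining the first two and equating with the third yields $T(q_1)+A(q_1\ar q_2)+A(q_2\ar q_3)=T(q_1)+A(q_1\ar q_3)$. Since $T(q_1)\in\mathbb{R}$ by the first step, we can subtract it to obtain $A(q_1\ar q_2)+A(q_2\ar q_3)=A(q_1\ar q_3)$, which is exactly the geodesic identity. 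This equality is of course consistent with the triangle inequality \eqref{equ:triangle}, but the point is that the hypothesis forces it to hold with equality. Therefore $\gamma$ is a geodesic, and together with the assumed additivity \eqref{equ:geodesic_T} it is a $T$-geodesic by definition.

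The only genuinely delicate point is making sure the cancellation $T(q_1)+\cdots=T(q_1)+\cdots$ is legitimate, i.e.\ that we never subtract infinities. This is precisely why the finiteness step is placed first and why the assumption $T(q_0)\in\mathbb{R}$ is needed; without it, the hypothesis \eqref{equ:geodesic_T} could in principle be trivially satisfied by the equality $-\infty=-\infty+A(q_1\ar q_2)$, and no geodesic conclusion would follow. Beyond that, the argument is purely algebraic.
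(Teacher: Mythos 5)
Your proof is correct and follows essentially the same route as the paper's: establish finiteness of $T$ along $\gamma\cap(\mathbb{R}\times\mathbb{N})$ from \eqref{equ:geodesic_T} and $T(q_0)\in\mathbb{R}$, then verify the geodesic identity $A(q_1\ar q_2)+A(q_2\ar q_3)=A(q_1\ar q_3)$ by telescoping the $T$-increments and cancelling the finite terms. The paper's version is just more terse; your extra remarks (comparability of $q$ with $q_0$ in the totally ordered set $\gamma$, finiteness of the LPP values from continuity of $A$, and the observation that the subtraction is legitimate because no infinities appear) are exactly the details being elided there.
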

\begin{proof}
Note from \eqref{equ:geodesic_T} and the assumption, we have $T(q)\in\mathbb{R}$ for all $q\in\gamma\cap(\mathbb{R}\times\mathbb{N})$. Now we show $\gamma$ is a $T$-geodesic. Fix $q_1\preceq q_2\preceq q_3\in \gamma\cap(\mathbb{R}\times\mathbb{N})$. From the assumption we have
\begin{align*}
A(q_1\ar q_2)+A(q_2\ar q_3)=&T(q_2)-T(q_1)+T(q_3)-T(q_2)\\
=&T(q_3)-T(q_1)=A(q_1\ar q_3).
\end{align*}
This shows that $\gamma$ is a geodesic. Since the condition~\eqref{equ:T_geodesic} is assumed in \eqref{equ:geodesic_T}, the proof is complete.
\end{proof}
\begin{lemma}\label{lem:glue}
Let $q'\in \mathbb{R}\times\mathbb{N}$ and $\gamma_1$, $\gamma_2$ be $T$-geodesics. Assume that $q\in\gamma_1\cap\gamma_2$, $q\preceq q_2$ for all $q_2\in\gamma_2$ and $q_1\preceq q$ for all $q_1\in\gamma_1$. Further assume there exists $q_0\in\gamma_2$ such that $T(q_0)\in\mathbb{R}$. Then $\gamma=\gamma_1\cup\gamma_2$ is a $T$-geodesic.
\end{lemma}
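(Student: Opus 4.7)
The plan is to apply Lemma~\ref{lem:geodesic_T} to $\gamma = \gamma_1 \cup \gamma_2$, which reduces the problem to checking (i) that $\gamma$ is a directed path, (ii) that the identity \eqref{equ:geodesic_T} holds for all $\preceq$-ordered pairs in $\gamma \cap (\mathbb{R} \times \mathbb{N})$, and (iii) that $T$ takes a finite value on at least one point of $\gamma$. Conditions (i) and (iii) are essentially immediate from the hypotheses; the substantive work is to verify (ii) for pairs whose endpoints straddle $q$.

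First I would check that $\gamma$ is a directed path: it is closed as a union of two closed sets; any two of its elements are $\preceq$-comparable because comparability holds within each $\gamma_i$, and any $p_1 \in \gamma_1$ and $p_2 \in \gamma_2$ satisfy $p_1 \preceq q \preceq p_2$ by the hypotheses on the respective ends of $\gamma_1$ and $\gamma_2$. Finally, $\Pi_1(\gamma_1)$ and $\Pi_1(\gamma_2)$ are closed intervals meeting exactly at $\Pi_1(q)$, and similarly for $\Pi_2$, so the coordinate projections of $\gamma$ are closed intervals. Thus $\gamma$ satisfies the definition of a directed path.

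Next I would verify \eqref{equ:geodesic_T} for any $p_1 \preceq p_2$ in $\gamma \cap (\mathbb{R}\times\mathbb{N})$. When $p_1, p_2$ both lie in the same $\gamma_i$, this is the $T$-geodesic property of $\gamma_i$. The only substantive case is $p_1 \in \gamma_1$ and $p_2 \in \gamma_2$, which means $p_1 \preceq q \preceq p_2$. Finiteness of $T(q)$ follows from the $T$-geodesic identity applied to $q \preceq q_0$ in $\gamma_2$ together with the assumption $T(q_0) \in \mathbb{R}$ (the last-passage time $A(q \ar q_0)$ is a finite real number since $A$ is continuous and the relevant path space is compact); finiteness of $T(p_1)$ and $T(p_2)$ then follows similarly from the $T$-geodesic identities in $\gamma_1$ and $\gamma_2$ anchored at $q$. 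Applying the $T$-geodesic property of $\gamma_1$ to $p_1 \preceq q$ and of $\gamma_2$ to $q \preceq p_2$ and telescoping gives
\[
T(p_2) - T(p_1) = \bigl(T(p_2) - T(q)\bigr) + \bigl(T(q) - T(p_1)\bigr) = A(p_1 \ar q) + A(q \ar p_2).
\]
The triangle inequality \eqref{equ:triangle} bounds the right-hand side above by $A(p_1 \ar p_2)$, while Lemma~\ref{lem:Action+LPP} applied to the action $T$ bounds the left-hand side below by $A(p_1 \ar p_2)$. Both bounds are forced to be equalities, so $T(p_2) = T(p_1) + A(p_1 \ar p_2)$, as required.

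With \eqref{equ:geodesic_T} established on all of $\gamma \cap (\mathbb{R} \times \mathbb{N})$ and $T(q_0) \in \mathbb{R}$ providing one finite value, Lemma~\ref{lem:geodesic_T} concludes that $\gamma$ is a $T$-geodesic. There is no real obstacle in this argument; the only delicate point is ensuring the telescoping identity across $q$ does not involve an indeterminate form such as $\infty - \infty$, which the finiteness chain traced through $q_0 \to q \to p_1, p_2$ rules out. The interplay between the upper bound from the triangle inequality on $A$ and the lower bound from Lemma~\ref{lem:Action+LPP} on $T$ is the conceptual content of the proof.
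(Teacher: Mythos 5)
Your proof is correct and follows essentially the same route as the paper: both reduce to Lemma~\ref{lem:geodesic_T}, handle the same-piece cases by hypothesis, and settle the mixed case $p_1 \in \gamma_1$, $p_2 \in \gamma_2$ by sandwiching $T(p_2)-T(p_1)$ between $A(p_1\ar p_2)$ (via Lemma~\ref{lem:Action+LPP}) and $A(p_1\ar q)+A(q\ar p_2)$ (via the triangle inequality and the $T$-geodesic identities of $\gamma_1$, $\gamma_2$). The presentation differs only cosmetically—you write the telescoping identity explicitly and then squeeze, whereas the paper chains the inequalities directly—but the ingredients and structure are identical.
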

\begin{proof}
It is direct to check that $\gamma=\gamma_1\cup\gamma_2$ is a directed path. Applying Lemma~\ref{lem:geodesic_T} to $\gamma_2$ and then to $\gamma_1$, we deduce that $T(q)\in\mathbb{R}$ for all $q\in\gamma\cap (\mathbb{R}\times\mathbb{N})$. To prove $\gamma$ is a $T$-geodesic, in view of Lemma~\ref{lem:geodesic_T}, it suffices to verify \eqref{equ:geodesic_T}. Fix $q_1,q_2\in \gamma\cap (\mathbb{R}\times\mathbb{N})$ with $q_1\preceq q_2$. Suppose $q_1,q_2\in \gamma_1\cap (\mathbb{R}\times\mathbb{N})$ or $q_1,q_2\in \gamma_2\cap (\mathbb{R}\times\mathbb{N})$, then \eqref{equ:geodesic_T} holds since each $\gamma_1$ and $\gamma_2$ are $T$-geodesics. Suppose $q_1\in\gamma_1$ and $q_2\in\gamma_2$. From \eqref{equ:Action+LPP}, we have $T(q_2)-T(q_1)\geq A(q_1\ar q_2)$. Applying the triangle inequality \eqref{equ:triangle} and the fact that $\gamma_i$ are $T$-geodesics,
\begin{align*}
A(q_1\ar q_2)\geq A(q_1\ar q)+A(q\ar q_2)=T(q)-T(q_1)+T(q_2)-T(q)=T(q_2)-T(q_1).
\end{align*}
This concludes \eqref{equ:geodesic_T} and completes the proof.
\end{proof}

Let $T$ be an upper semicontinuous action on $\mathbb{R}\times\mathbb{N}$. We associate $T$ with a map $F_T$ from $\mathbb{R}\times\mathbb{N}$ to $\mathbb{R}\cup\{-\infty\}$ as follows. Fix $(x,k)\in \mathbb{R}\times\mathbb{N}$. Consider the set
\begin{align*}
G_T(x,k)=\{y\in (-\infty,x] \, :\, T(y,k+1)+A(x,k)-A(y,k)=T(x,k)\}.
\end{align*}
From \eqref{equ:Local_metric_composition_law}, this set is equal to
\begin{align*}
G_T(x,k)=\{y\in (-\infty,x] \, :\, T(y,k+1)+A(x,k)-A(y,k)\geq T(x,k)\}.
\end{align*}
Since $T$ is upper semicontinuous, $G_T(x,k)$ is a closed set. We define $F_T(x,k)=\sup G_T(x,k)$. Suppose $G_T(x,k)$ is empty and $F_T(x,k)=-\infty$, we set $\gamma(x,k)=(-\infty,x]\times \{k\}$. Suppose $G_T(x,k)$ is non-empty. We denote $F_T(x,k)=x'\in (-\infty,x]$ and set $\gamma(x,k)=[x',x]\times \{k\}\cup \{(x',k+1)\}$. 
\begin{lemma}
Let $T$ be an upper semicontinuous action on $\mathbb{R}\times\mathbb{N}$. Fix $(x,k)\in \mathbb{R}\times\mathbb{N}$. Suppose $F_T(x,k)=x'\in (-\infty,x]$. Then 
\begin{equation}\label{equ:segment_1}
T(y,k)=T(x,k)-A(x,k)+A(y,k)\ \textup{for all}\ y\in [x',x].
\end{equation}
Moreover,
\begin{equation}\label{equ:jump_x'}
T(x',k+1)=T(x',k).
\end{equation}

Suppose $F_T(x,k)=-\infty$. Then
\begin{equation}\label{equ:segment_2}
T(y,k)=T(x,k)-A(x,k)+A(y,k)\ \textup{for all}\ y\in (-\infty,x].
\end{equation}
\end{lemma}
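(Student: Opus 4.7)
\bigskip

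\noindent\textbf{Proof proposal.}

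The whole lemma is driven by the action recursion at the pair $(y,x)$ for $y\le x$,
\[
T(x,k)=\bigl(T(y,k)+A(x,k)-A(y,k)\bigr)\ \vee\ \sup_{z\in[y,x]}\bigl(T(z,k+1)+A(x,k)-A(z,k)\bigr),
\]
combined with the observation that, because $T(\cdot,k+1)$ is upper semicontinuous and $A$ is continuous, the supremum on the right is attained on the compact interval $[y,x]$. In particular any maximizer $z^\ast$ for which the supremum equals $T(x,k)$ lies in $G_T(x,k)$, so $G_T(x,k)$ is a closed subset of $(-\infty,x]$; when it is non-empty its supremum $x'$ belongs to it, giving $T(x',k+1)+A(x,k)-A(x',k)=T(x,k)$.

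For the case $F_T(x,k)=-\infty$, i.e.\ $G_T(x,k)=\emptyset$: for any $y\le x$, if the supremum in the recursion equalled $T(x,k)$, then by the attainment argument above some $z^\ast\in[y,x]$ would lie in $G_T(x,k)$, a contradiction. Hence the supremum is strictly less than $T(x,k)$, so the max must be realized by the first term, giving \eqref{equ:segment_2}.

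For the case $F_T(x,k)=x'\in(-\infty,x]$ and $y\in(x',x]$: any $z^\ast\in[y,x]$ achieving the supremum at value $T(x,k)$ would satisfy $z^\ast\in G_T(x,k)$ and hence $z^\ast\le x'<y$, contradicting $z^\ast\ge y$. So again the first term wins and $T(y,k)=T(x,k)-A(x,k)+A(y,k)$. To extend this to $y=x'$, I would take the right limit $y\downarrow x'$: continuity of $A$ gives $\lim_{y\downarrow x'}T(y,k)=T(x,k)-A(x,k)+A(x',k)$, while upper semicontinuity of $T(\cdot,k)$ yields $T(x',k)\ge\limsup_{y\downarrow x'}T(y,k)$, and the reverse inequality $T(x',k)\le T(x,k)-A(x,k)+A(x',k)$ follows directly from the first term of the action recursion applied to the pair $(x',x)$. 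This proves \eqref{equ:segment_1}. Finally, \eqref{equ:jump_x'} is immediate from $x'\in G_T(x,k)$: that membership gives $T(x',k+1)=T(x,k)-A(x,k)+A(x',k)$, which equals $T(x',k)$ by \eqref{equ:segment_1} evaluated at $y=x'$.

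The only mildly delicate point is the boundary case $y=x'$, which requires using upper semicontinuity of $T(\cdot,k)$ itself (the argument on the open part $(x',x]$ used only USC at level $k+1$). Everything else is a direct consequence of: (i) the action recursion, (ii) the attainment of the supremum over the compact interval $[y,x]$ by USC of $T(\cdot,k+1)$, and (iii) the definition of $F_T$ as a sup of a closed set.
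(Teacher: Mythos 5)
Your proof is correct, and the strategy for the interior range $y\in(x',x]$ and for \eqref{equ:segment_2} is the same as the paper's: by the action recursion applied from $y$ to $x$, together with upper semicontinuity of $T(\cdot,k+1)$ (so the supremum over the compact interval $[y,x]$ is attained), any maximizer of the sup term would lie in $G_T(x,k)$ and hence be $\le x'$, forcing the first term to achieve the max.

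Where you diverge, mildly, is the boundary case $y=x'$ and \eqref{equ:jump_x'}. You prove \eqref{equ:segment_1} at $y=x'$ by a right-limit argument using upper semicontinuity of $T(\cdot,k)$ together with the reverse inequality from the recursion, and then deduce \eqref{equ:jump_x'} from $x'\in G_T(x,k)$ and \eqref{equ:segment_1}. The paper instead combines $x'\in G_T(x,k)$ with two direct applications of the action recursion to get the chain
\[
T(x,k)=T(x',k+1)+A(x,k)-A(x',k)\le T(x',k)+A(x,k)-A(x',k)\le T(x,k),
\]
which forces both intermediate inequalities to be equalities and delivers \eqref{equ:jump_x'} and the $y=x'$ case of \eqref{equ:segment_1} simultaneously. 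The paper's version is a bit tighter (no limit is needed; upper semicontinuity of $T(\cdot,k)$ itself is not invoked there, only the closedness of $G_T$, which was already established from USC of $T(\cdot,k+1)$), but your route is also valid since $T$ is assumed USC on all lines. Both are fine; the paper's is marginally cleaner at the boundary.
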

\begin{proof}
Suppose $F_T(x,k)=x'\in (-\infty,x]$. Then $T(z,k+1)+A(x,k)-A(z,k)<T(x,k)$ for all $z\in (x',x]$. From the action recursion \eqref{equ:LPPAction1} and the upper semicontinuity of $T$, we deduce \eqref{equ:segment_1} for all $y\in (x',x]$. Using the definition of $F_T(x,k)=x'$ and applying \eqref{equ:Local_metric_composition_law} twice, we deduce
\begin{align*}
T(x,k)=T(x',k+1)+A(x,k)-A(x',k)\leq T(x',k)+A(x,k)-A(x',k) \leq T(x,k).
\end{align*}
Hence \eqref{equ:jump_x'} holds and \eqref{equ:segment_1} holds for $y=x'$. The proof for \eqref{equ:segment_2} is similar and we omit the details.
\end{proof}

\begin{lemma}\label{lem:one_piece}
For any $(x,k)\in \mathbb{R}\times\mathbb{N}$ with $T(x,k)\in\mathbb{R}$, $\gamma(x,k)$ is a $T$-geodesic.
\end{lemma}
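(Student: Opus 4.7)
The plan is to invoke Lemma~\ref{lem:geodesic_T}, which reduces the verification that $\gamma(x,k)$ is a $T$-geodesic to establishing the local additivity identity \eqref{equ:geodesic_T} along pairs $q_1\preceq q_2$ in $\gamma(x,k)\cap(\mathbb{R}\times\mathbb{N})$. The hypothesis that $T(q_0)\in\mathbb{R}$ for some $q_0$ on the path is automatic since $(x,k)\in\gamma(x,k)$ and $T(x,k)\in\mathbb{R}$ by assumption, so I only need the additivity step.

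I would then split into the two cases distinguished in the construction of $\gamma(x,k)$. When $F_T(x,k)=-\infty$, we have $\gamma(x,k)=(-\infty,x]\times\{k\}$. For $y_1\leq y_2\leq x$, equation \eqref{equ:segment_2} gives $T(y_i,k)=T(x,k)-A(x,k)+A(y_i,k)$, so subtraction yields $T(y_2,k)-T(y_1,k)=A(y_2,k)-A(y_1,k)=A((y_1,k)\ar(y_2,k))$, and \eqref{equ:geodesic_T} holds.

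When $F_T(x,k)=x'\in(-\infty,x]$, we have $\gamma(x,k)=[x',x]\times\{k\}\cup\{(x',k+1)\}$. For pairs $q_1,q_2$ both in $[x',x]\times\{k\}$, the same argument as above applies using \eqref{equ:segment_1} in place of \eqref{equ:segment_2}. The remaining case is $q_1=(x',k+1)$ and $q_2=(y,k)$ with $y\in[x',x]$. Here I would combine \eqref{equ:segment_1} with the jump identity \eqref{equ:jump_x'} to compute
\begin{equation*}
T(y,k)-T(x',k+1)=T(y,k)-T(x',k)=A(y,k)-A(x',k).
\end{equation*}
The directed path that jumps from level $k+1$ to level $k$ immediately at $x'$ and then walks horizontally to $(y,k)$ has length $A(y,k)-A(x',k)$, so $A((x',k+1)\ar(y,k))\geq A(y,k)-A(x',k)$. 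On the other hand, Lemma~\ref{lem:Action+LPP} gives $T(y,k)-T(x',k+1)\geq A((x',k+1)\ar(y,k))$. Combining these inequalities forces equality throughout, establishing \eqref{equ:geodesic_T} for this pair.

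The only step requiring attention is the mixed-level case: one must recognize that the immediate descent at $x'$ realizes the last passage value, and this is precisely what the identity \eqref{equ:jump_x'}, which in turn is a consequence of the definition of $F_T(x,k)$ as the supremum of $G_T(x,k)$ together with the upper semicontinuity of $T$, is designed to encode. The remainder is bookkeeping.
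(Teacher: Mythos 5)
Your proof is correct and follows the same overall structure as the paper's: reduce to verifying the local additivity \eqref{equ:geodesic_T} via Lemma~\ref{lem:geodesic_T}, then split on whether $F_T(x,k)$ is finite. The one structural difference is in the second case: the paper first establishes that the two sub-paths $\gamma_1(x,k)=\{x'\}\times\{k,k+1\}$ and $\gamma_2(x,k)=[x',x]\times\{k\}$ are each $T$-geodesics (using \eqref{equ:jump_x'} and \eqref{equ:segment_1} respectively) and then invokes the gluing lemma (Lemma~\ref{lem:glue}), whereas you inline the mixed-level step, bounding $A((x',k+1)\ar(y,k))$ from below by the length of the concatenated path and from above via Lemma~\ref{lem:Action+LPP}. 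These are the same calculation: the gluing lemma's proof uses the triangle inequality $A(q_1\ar q_2)\ge A(q_1\ar q)+A(q\ar q_2)$ together with the geodesic property of each piece, which is precisely your two-sided squeeze. Your version is a little more concrete and self-contained; the paper's is more modular, which pays off since Lemma~\ref{lem:glue} is reused in the proof of Proposition~\ref{prop:Bq_geodesic}. One small omission: Lemma~\ref{lem:geodesic_T} requires $\gamma(x,k)$ to be a directed path, which the paper notes is ``direct to check''; you should at least acknowledge this.
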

\begin{proof}
It is direct to check that $\gamma(x,k)$ is a directed path. In view of Lemma~\ref{lem:geodesic_T}, it suffices to check the condition~\eqref{equ:geodesic_T}. 

We begin with the case $F_T(x,k)=-\infty$ and $\gamma(x,k)=(-\infty,x]\times \{k\}$. From \eqref{equ:segment_2}, we have
\begin{equation*}
T(y,k)=T(x,k)-A(x,k)+A(y,k)\ \textup{for all}\ y\in (-\infty,x]. 
\end{equation*}
It is then direct to verify \eqref{equ:geodesic_T} for $\gamma(x,k)$. 

Next, we consider the case $F_T(x,k)=x'\in\mathbb{R}$ and $\gamma(x,k)=[x',x]\times \{k\}\cup \{(x',k+1)\}$. Set $\gamma_1(x,k)=\{x'\}\times \{k,k+1\}$ and $\gamma_2(x,k)=[x',x]\times \{k\}$. From \eqref{equ:jump_x'}, $T(x',k+1)=T(x',k)$. Hence \eqref{equ:geodesic_T} holds on $\gamma_1(x,k)=\{x'\}\times \{k,k+1\}$. From \eqref{equ:segment_1}, 
\begin{align*}
T(y,k)=T(x,k)-A(x,k)+A(y,k)\ \textup{for all}\ y\in [x',x].
\end{align*}
Hence \eqref{equ:geodesic_T} holds on $\gamma_2(x,k)=[x',x]\times \{k\}$. Applying Lemma \ref{lem:glue} shows $\gamma(x,k)=\gamma_1(x,k)\cup\gamma_2(x,k)$ is a $T$-geodesic. The proof is complete.
\end{proof}

\begin{proof}[Proof of Proposition~\ref{prop:Bq_geodesic}]
Define $\bar{F}_T:(\mathbb{R}\cup\{-\infty\})\times\mathbb{N}\to (\mathbb{R}\cup\{-\infty\})\times\mathbb{N}$ by 
\begin{align*}
\bar{F}_T(x,k)=\left\{ \begin{array}{cc}
(F_T(x,k),k+1), & x\in\mathbb{R},\\
(-\infty, k+1), & x=-\infty.
\end{array} \right.
\end{align*}
Fix $q=(x_0,k_0)\in\mathbb{R}\times\mathbb{N}$. We inductively define an nonincreasing sequence $\{x_j\}_{j=1}^\infty$ in $\mathbb{R}\cup\{-\infty\}$ by $(x_j,k_0+j)=\bar{F}_T(x_{j-1},k_0+j-1)$. We separate the discussion into two cases.

Assume $x_j>-\infty$ for all $j\geq 1$. Let $a_\gamma=\lim_{j\to\infty} x_j$. We define $\gamma$ by
\begin{align*}
\gamma=\cup_{j=0}^\infty \gamma(x_j,k_0+j)\cup (-\infty,a_\gamma]\times\{\infty\}.
\end{align*} 
Applying Lemmas~\ref{lem:glue} and \ref{lem:one_piece}, we deduce that $\gamma$ is an infinite $(T,q)$-geodesic.

Next, we assume there exists $j_0\geq 0$ such that $x_{j_0+1}=-\infty$ and $x_{j_0}>-\infty$ for $0\leq j\leq j_0$. We define $\gamma$ by
\begin{align*}
\gamma=\cup_{j=0}^{j_0} \gamma(x_j,k_0+j).
\end{align*}
Applying Lemmas~\ref{lem:glue} and \ref{lem:one_piece}, we deduce that $\gamma$ is an infinite $(T,q)$-geodesic.
\end{proof}

\section{Convergence of actions}

This section's primary results are Propositions~\ref{p:lp-to-lp}, \ref{p:polymer_to_lp}, \ref{p:polymer-to-polymer}, and \ref{p:polymer_to_lp_discrete}. These propositions establish the framework for the convergence of environments with associated actions. As the proofs are standard $\eps$-$\delta$ arguments, the reader may wish to read the statement of Proposition \ref{p:lp-to-lp} and to proceed to the next section, which is more conceptual and of greater interest. 

In this section we will use upper semicontinuous functions and the corresponding convergence notions (hypograph topology). See Appendix \ref{a:UC} for a review and notations.

\subsection{Last Passage to Last Passage}\label{sec:lp_to_lp}

\begin{proposition}\label{p:lp-to-lp}
Let $ \alpha_n\uparrow\infty$ be a sequence in $[0,\infty]$, $j_n\uparrow\infty$ be a sequence in $\mathbb{N}\cup\{\infty\}$.  Let $M_n=[-\alpha_n,\infty)\times \{k\in\mathbb{N}\ :\ k<j_n\}$. For each $n$, let $A_n:M_n\to\mathbb{R}$ be an environment and let $T_n:M_n\to\mathbb{R}\cup\{\pm\infty\}$ be an $A_n$-action which is upper semicontinuous. 

Suppose that there exists a continuous environment $A:\mathbb{R}\times\mathbb{N}\to\mathbb{R}$ such that $A_n$ converges to $A$ locally uniformly. Further assume that $T_n$ converges to $T$ in $\uc(\mathbb{R}\times\mathbb{N})$. Then $T$ is an $A$-action.   
\end{proposition}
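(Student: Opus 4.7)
The plan is to verify the action recursion \eqref{equ:Local_metric_composition_law} for $T$ at an arbitrary $x < y \in \mathbb{R}$, $k \in \mathbb{N}$, by passing to the limit in the recursion for $T_n$. Since $\alpha_n, j_n \uparrow \infty$, for $n$ large both $x, y \in [-\alpha_n, \infty)$ and $k+1 < j_n$, so the full $T_n$-recursion applies on $[x, y_n]$ whenever $y_n > x$ lies in the domain. Write
\begin{align*}
a &= T(x,k) + A(y,k) - A(x,k), \\
g(z) &= T(z,k+1) + A(y,k) - A(z,k), \\
b &= \sup_{z \in [x,y]} g(z).
\end{align*}
The goal is $T(y, k) = a \vee b$.

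For the upper bound $T(y, k) \le a \vee b$, I will use hypograph convergence (Appendix \ref{a:UC}) to choose $y_n \to y$ with $T_n(y_n, k) \to T(y, k)$. The $T_n$-recursion on $[x, y_n]$ expresses $T_n(y_n, k)$ as a max of two terms. The first, $T_n(x, k) + A_n(y_n, k) - A_n(x, k)$, has $\limsup \le a$ because $\limsup_n T_n(x, k) \le T(x, k)$ (upper semicontinuous convergence) and $A_n \to A$ locally uniformly. For the sup term, near-maximizers $z_n \in [x, y_n]$ admit a subsequential limit $z^\ast \in [x, y]$, and the combination of uniform $A_n$-convergence with hypograph convergence of $T_n(\cdot, k+1)$ gives $\limsup \le g(z^\ast) \le b$. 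Taking the max yields $T(y, k) \le a \vee b$.

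For the lower bound $T(y, k) \ge a \vee b$, I will realize each term as a limit. Choosing $x_n \to x$ and $y_n \to y$ with $T_n(x_n, k) \to T(x, k)$ and $T_n(y_n, k) \to T(y, k)$ (so $x_n < y_n$ for $n$ large), the recursion inequality $T_n(y_n, k) \ge T_n(x_n, k) + A_n(y_n, k) - A_n(x_n, k)$ passes to the limit to give $T(y, k) \ge a$. For any $z \in [x, y)$, an analogous choice of $z_n \to z$ with $T_n(z_n, k+1) \to T(z, k+1)$ (and $z_n < y_n$ eventually) yields $T(y, k) \ge g(z)$, hence $T(y, k) \ge \sup_{z \in [x, y)} g(z)$.

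The main obstacle is the boundary case $z = y$, which matters because upper semicontinuity of $g$ permits $g(y) > \sup_{z \in [x, y)} g(z)$; an ordering mismatch between the $(k+1)$-point and the $k$-point prevents the previous argument from applying directly. My fix is to pick $y_n' \to y$ with $T_n(y_n', k+1) \to T(y, k+1) = g(y)$ and apply the $T_n$-recursion over the shrinking enclosing interval $[y_n' - 1/n, y_n' + 1/n]$ containing $z = y_n'$, producing
\begin{align*}
T_n(y_n' + 1/n, k) \ge T_n(y_n', k+1) + A_n(y_n' + 1/n, k) - A_n(y_n', k).
\end{align*}
Combining with $\limsup T_n(y_n' + 1/n, k) \le T(y, k)$ (upper semicontinuous convergence at $y_n' + 1/n \to y$) gives $T(y, k) \ge g(y)$. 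Taken together with the previous case this yields $T(y, k) \ge b$, and hence $T(y, k) \ge a \vee b$, completing the verification.
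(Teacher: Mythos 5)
Your proof is correct and follows essentially the same route as the paper: it verifies the action recursion by passing to the limit, split into the same upper bound $T(y,k)\le a\vee b$ and lower bounds $T(y,k)\ge a$ and $T(y,k)\ge b$, using the hypograph-convergence properties of $T_n$ together with the locally uniform convergence of $A_n$. The one small stylistic difference is the treatment of the boundary case $z=y$ in the lower bound: the paper first proves the intermediate inequality $T(w',k)\ge T(w,k+1)+A(w',k)-A(w,k)$ for $w'>w$ and then sends $w'\downarrow y$ using upper semicontinuity of the limit $T$, whereas you perform the analogous diagonal limit directly at the pre-limit level by applying the recursion on the shrinking intervals $[y_n'-1/n,\,y_n'+1/n]$; both arguments are equivalent and rely on the same upper-semicontinuity fact.
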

\begin{proof}
We need to verify the action recursion \eqref{equ:Local_metric_composition_law}. Fix $x<y$ and $k\in\mathbb{N}$. We aim to show
\begin{equation}\label{equ:lp-to-lp_1}
T(y,k) \leq (T(x,k)+A(y,k)-A(x,k) ) \vee \sup_{z\in [x,y]}\big( T(z,k+1)+A(y,k)-A(z,k)\big),
\end{equation} 
\begin{equation}\label{equ:lp-to-lp_2}
T(y,k)\geq T(x,k)+A(y,k)-A(x,k),
\end{equation}
and
\begin{equation}\label{equ:lp-to-lp_3}
T(y,k)\geq \sup_{z\in [x,y]}\big( T(z,k+1)+A(y,k)-A(z,k)\big).
\end{equation}
We begin with \eqref{equ:lp-to-lp_1}. For any $0<\varepsilon<1$, let $0<\delta<1\wedge (y-x)$ be a positive number such that $T(z,k+1)\leq T(y,k+1)+\varepsilon$ for all $z\in [y,y+\delta]$ and $|A(w,k)-A(y,k)|\leq \varepsilon$ for all $|w-y|\leq \delta$. Let $n$ be large enough such that $|A_n(w,k)-A(w,k)|\leq \varepsilon$ for all $w\in [x-1,y+1]$. Fix arbitrary $y'\in (y-\delta,y+\delta)$. From \eqref{equ:Local_metric_composition_law}, we have
\begin{align*}
T_n(y',k) \leq (T_n(x,k)+A_n(y',k)-A_n(x,k) ) \vee \sup_{z\in [x,y']}\big( T_n(z,k+1)+A_n(y',k)-A_n(z,k)\big).
\end{align*}
From the above arrangement,
\begin{align*}
T_n(x,k)+A_n(y',k)-A_n(x,k)\leq & T_n(x,k)+A(y',k)-A(x,k)+2\varepsilon\\
\leq &T_n(x,k)+A(y,k)-A(x,k)+3\varepsilon.
\end{align*}
Similarly,  
\begin{align*}
\sup_{z\in [x,y']}\big( T_n(z,k+1)+A_n(y',k)-A_n(z,k)\big)\leq \sup_{z\in [x,y+\delta]}\big( T_n(z,k+1)+A(y,k)-A(z,k)\big)+3\varepsilon.
\end{align*}
Therefore,
\begin{align*}
\sup_{y'\in (y-\delta,y+\delta)} &T_n(y',k) \\ &\leq (T_n(x,k)+A (y ,k)-A (x,k) ) \vee \!\!\! \sup_{z\in [x,y+\delta]}\big( T_n(z,k+1)+A (y ,k)-A (z,k)\big)+3\varepsilon.
\end{align*}
Taking $\limsup_{n\to\infty}$ and applying \eqref{equ:UC_limit_closed} and \eqref{equ:UC_limit_open}, we get
\begin{align*}
\sup_{y'\in (y-\delta,y+\delta)} &T(y',k) \\&\leq (T(x,k)+A (y ,k)-A (x,k) ) \vee \sup_{z\in [x,y+\delta]}\big( T(z,k+1)+A (y ,k)-A (z,k)\big)+3\varepsilon.
\end{align*}
For $z\in [y,y+\delta]$, we have
\begin{align*}
T(z,k+1)+A (y ,k)-A (z,k)\leq T(y,k+1)+2\varepsilon.
\end{align*}
Therefore,
\begin{align*}
T(y,k) \leq (T(x,k)+A(y,k)-A(x,k) ) \vee \sup_{z\in [x,y]}\big( T(z,k+1)+A(y,k)-A(z,k)\big)+5\varepsilon.
\end{align*}
Sending $\varepsilon\to 0$ yields \eqref{equ:lp-to-lp_1}.

Next, we turn to \eqref{equ:lp-to-lp_2}. Fix any $0<\varepsilon<1$. Let $0<\delta<1\wedge (y-x)$ be a positive number such that $|A(x',k)-A(x,k)|\leq \varepsilon$ for all $|x'-x|\leq \delta$. Let $n$ be large enough such that $|A_n(w,k)-A(w,k)|\leq \varepsilon$ for all $w\in [x-1,y+1]$. Fix arbitrary $x'\in (x-\delta,x+\delta)$. From \eqref{equ:Local_metric_composition_law},
\begin{align*}
T_n(y,k)\geq T_n(x',k)+A_n(y,k)-A_n(x',k)\geq T_n(x',k)+A(y,k)-A(x',k)-2\varepsilon\\
\geq T_n(x',k)+A(y,k)-A(x,k)-3\varepsilon.
\end{align*}
Hence
\begin{align*}
T_n(y,k)\geq \sup_{x'\in (x-\delta,x+\delta)}T_n(x',k)+A(y,k)-A(x,k)-3\varepsilon.
\end{align*}
Taking $\limsup_{n\to\infty}$ and applying \eqref{equ:UC_limit_closed} and \eqref{equ:UC_limit_open}, we get
\begin{align*}
T(y,k)\geq T(x,k)+A(y,k)-A(x,k)-3\varepsilon.
\end{align*}
Sending $\varepsilon\to 0$ yields \eqref{equ:lp-to-lp_2}.

Lastly, we prove \eqref{equ:lp-to-lp_3}. An argument similar to the above one shows for any $w'>w$, $T(w',k)\geq  T(w,k+1)+A(w',k)-A(w,k)$. Taking $w'=y$ and $w=z$ gives
\begin{align*}
T(y,k)\geq  \sup_{z\in [x,y)}\big( T(z,k+1)+A(y,k)-A(z,k)\big).
\end{align*}
Taking $w=y$ and $w'\downarrow y$ gives
\begin{align*}
T(y,k)\geq T(y,k+1).
\end{align*}
Combining the above yields \eqref{equ:lp-to-lp_3}. The proof is complete.
\end{proof}

\subsection{Direct Polymer to Last Passage}

In semi-discrete polymer models, actions can be defined to match how the point-to-point free energy evolves.

\begin{definition}
Let $A$ be an environment defined on $M=I\times J$ and $w>1$. A function $T:M\to\mathbb{R}\cup\{\pm\infty\}$ is called a $w$\textbf{-polymer} $A$\textbf{-action} if for all $x<y$ in $I$ and $k\in J$, we have
\begin{equation}\label{e:polymer-mc}
w^{T(y,k)} = w^{ T(x,k)+A(y,k)-A(x,k)}+\int_x^y w^{T(z,k+1)+A(y,k)-A(z,k)}dz.
\end{equation}
Here we adopt the convention that $T(z,k+1)\equiv -\infty$ if $k+1\notin J$, $w^\infty=\infty$ and $w^{-\infty}=0$.  We call \eqref{e:polymer-mc} the $w$-polymer action recursion. In words, $w^T$ satisfies the integral version of the semi-discrete heat equation. In the literature, $w$ is typically replaced by $e^\beta$ where $\beta>0$ is the inverse temperature. For simplicity, we will use $w$ throughout this work.
\end{definition}

Taking $w\to\infty$ in \eqref{e:polymer-mc} formally yields the action recursion \eqref{equ:Local_metric_composition_law}. Below we show that \eqref{e:polymer-mc} implies an approximate version of \eqref{equ:Local_metric_composition_law}. This lemma is a quantitative version of the Euler-Lagrange transition as $w\to\infty$. 

\begin{lemma}\label{l:EL}
Let $A$ be an environment defined on $M=I\times J$, $w>1$ and let $T$ be a $w$-polymer $A$-action. Then for all $x<y$ in $I$ and $k\in J$, we have
\begin{equation}\label{e:EL-le}
T(y,k)\le (T(x,k)+A(y,k)-A(z,k))\vee
\sup_{z\in [x,y]}(T(z,k+1)+A(y,k)-A(z,k))+\log_w(1+y-x).
\end{equation}
\begin{equation}\label{e:EL-ge_1}
T(y,k)\ge T(x,k)+A(y,k)-A(z,k).
\end{equation}
Let $\delta\in(0,y-x)$, then
\begin{equation}\label{e:EL-ge_2}
T(y,k)\ge \sup_{z\in [x,y-\delta]}(T(z,k+1)+A(y,k)-A(z,k))+\log_w\delta -m(\delta)
\end{equation}
with the one-sided modulus of continuity
$$m(\delta)
=\sup_{\stackrel{z,z'\in[x,y]}{0<z'-z\le \delta}}A(z',k)-A(z,k)-A(z',k+1)+A(z,k+1).
$$
\end{lemma}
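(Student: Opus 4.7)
The three inequalities all follow from direct manipulations of the polymer recursion \eqref{e:polymer-mc}, which I will view as a soft-max version of \eqref{equ:Local_metric_composition_law}: the first summand is a single exponential, and the integral is a continuous analogue of a length-weighted supremum. For \eqref{e:EL-le}, let $M$ denote the maximum appearing on the right-hand side (before the $\log_w$ term). The first summand in \eqref{e:polymer-mc} is bounded above by $w^M$, and the integrand is also bounded above pointwise by $w^M$, so the integral is at most $(y-x)w^M$. Adding and taking $\log_w$ gives \eqref{e:EL-le}. For \eqref{e:EL-ge_1}, I drop the nonnegative integral and take $\log_w$.

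The main step is \eqref{e:EL-ge_2}. Fix $z^*\in[x,y-\delta]$. Applying \eqref{e:EL-ge_1} (already proved) at level $k+1$ to the pair $(z^*,z)$ gives
$$T(z,k+1)\ge T(z^*,k+1)+A(z,k+1)-A(z^*,k+1), \qquad z\in[z^*,z^*+\delta],$$
where the case $k+1\notin J$ is handled by the $-\infty$ convention, which makes \eqref{e:EL-ge_2} trivial. Substituting into the integrand of \eqref{e:polymer-mc} and rearranging,
$$T(z,k+1)+A(y,k)-A(z,k) \ge T(z^*,k+1)+A(y,k)-A(z^*,k) - \bigl[A(z,k)-A(z^*,k)-A(z,k+1)+A(z^*,k+1)\bigr].$$
By the definition of the one-sided modulus, the bracketed quantity is at most $m(\delta)$. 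Hence the integrand of \eqref{e:polymer-mc} is uniformly bounded below on $[z^*,z^*+\delta]\subseteq[x,y]$ by $w^{T(z^*,k+1)+A(y,k)-A(z^*,k)-m(\delta)}$, so the integral is at least $\delta$ times this value. Dropping the first summand of \eqref{e:polymer-mc}, taking $\log_w$, and then supremum over $z^*\in[x,y-\delta]$ yields \eqref{e:EL-ge_2}.

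The only mild obstacle is that $A$ is merely cadlag, so the naive bound $A(z,k)-A(z^*,k)\approx 0$ for small $z-z^*$ can fail; this is precisely what the one-sided modulus $m(\delta)$ is designed to absorb, and it enters the estimate in a single clean step as above. Everything else reduces to nonnegativity of the integral and pointwise monotonicity, so no further analytic machinery is needed.
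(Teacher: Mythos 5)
Your argument is correct and mirrors the paper's proof step for step: \eqref{e:EL-le} by bounding both the atom and the integrand by the maximum, \eqref{e:EL-ge_1} by dropping the nonnegative integral, and \eqref{e:EL-ge_2} by applying \eqref{e:EL-ge_1} at level $k+1$ to shift the evaluation point, absorbing the resulting error into $m(\delta)$, and lower-bounding the integral over the subinterval $[z^*,z^*+\delta]$ by $\delta$ times the resulting uniform lower bound. You also silently corrected the paper's typographical slip ($A(z,k)$ should read $A(x,k)$ in the first term of \eqref{e:EL-le} and in \eqref{e:EL-ge_1}), and your brief note about the $k+1\notin J$ degenerate case is a small but harmless addition beyond what the paper writes.
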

\begin{proof}
Denote
$$Z=(T(x,k)+A(y,k)-A(z,k))\vee
\sup_{z\in [x,y]}(T(z,k+1)+A(y,k)-A(z,k)).$$
Then from \eqref{e:polymer-mc},
\begin{align*}
w^{T(y,k)} = &w^{ T(x,k)+A(y,k)-A(x,k)}+\int_x^y w^{T(z,k+1)+A(y,k)-A(z,k)}dz\\
\leq & w^Z+(y-x)w^Z=(y-x+1)w^Z.
\end{align*}
Taking the logarithm of the above gives \eqref{e:EL-le}.

The bound \eqref{e:EL-ge_1} direct follows by dropping the integral term from \eqref{e:polymer-mc}. Now we turn to \eqref{e:EL-ge_2}. From \eqref{e:EL-ge_1}, we have for all $z\leq z'$ in $I$ 
$$
T(z',k+1)-T(z,k+1)\ge A(z',k+1)-A(z,k+1).
$$
Therefore, for $z\in [x,y-\delta]$ and $z'\in [z,z+\delta]$,
\begin{align*}
T(z',k+1)+A(y,k)-A(z',k)\geq& \big(T(z,k+1)+A(y,k)-A(z,k)\big) \\
&-\big(A(z',k)-A(z,k)-A(z',k+1)+A(z,k+1)\big)\\
\geq &T(z,k+1)+A(y,k)-A(z,k)-m(\delta).
\end{align*}
Hence
\begin{align*}
\int_x^y w^{T(z',k+1)+A(y,k)-A(z',k)}\,dz'&\ge 
 \sup_{z\in [x,y-\delta]} \int_z^{z+\delta} w^{T(z',k+1)+A(y,k)-A(z',k)}\,dz'\\
 &\ge \delta \sup_{z\in [x,y-\delta]} w^{T(z,k+1)+A(y,k)-A(z,k)-m(\delta)}.
\end{align*}
Taking the logarithm of the above gives \eqref{e:EL-ge_2}.
\end{proof}
\begin{proposition}\label{p:polymer_to_lp}
Let $ \alpha_n\uparrow\infty$ be a sequence in $[0,\infty]$, $j_n\uparrow\infty$ be a sequence in $\mathbb{N}\cup\{\infty\}$, and $w_n\uparrow\infty$ be a sequence in $(1,\infty)$. Let $M_n=[-\alpha_n,\infty)\times \{k\in\mathbb{N}\ :\ k<j_n\}$. For each $n$, let $A_n:M_n\to\mathbb{R}$ be an environment and let $T_n:M_n\to\mathbb{R}\cup\{\pm\infty\}$ be a $w_n$-polymer $A_n$-action which is upper semicontinuous. 

Suppose that there exists a continuous environment $A:\mathbb{R}\times\mathbb{N}\to\mathbb{R}$ such that $A_n$ converges to $A$ locally uniformly. Further assume $T_n$ converges to $T$ in $\uc(\mathbb{R}\times\mathbb{N})$. Then $T$ is an $A$-action.   
\end{proposition}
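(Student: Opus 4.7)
The plan is to mimic the proof of Proposition~\ref{p:lp-to-lp}, with the exact action recursion replaced by the approximate recursion from Lemma~\ref{l:EL}. That lemma gives, for each $w_n$-polymer $A_n$-action $T_n$: the upper bound \eqref{e:EL-le} with extra term $\log_{w_n}(1+y-x)$; the exact LPP-type lower bound \eqref{e:EL-ge_1}; and the higher-level lower bound \eqref{e:EL-ge_2} with extra term $\log_{w_n}\delta - m_n(\delta)$, where $m_n(\delta)$ is the modulus from Lemma~\ref{l:EL} built from $A_n$. Both $\log_{w_n}$ terms vanish in $n$ since $w_n\to\infty$; the modulus satisfies $\limsup_{n\to\infty} m_n(\delta)\le m(\delta)\to 0$ as $\delta\to 0$ by local uniform convergence of $A_n$ and continuity of $A$. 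These error types are therefore removable provided the limits are ordered correctly.

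Fix $x<y$ in $\mathbb{R}$ and $k\in\mathbb{N}$. As in Proposition~\ref{p:lp-to-lp}, it suffices to verify the three inequalities \eqref{equ:lp-to-lp_1}, \eqref{equ:lp-to-lp_2}, and \eqref{equ:lp-to-lp_3}. For \eqref{equ:lp-to-lp_1}, apply \eqref{e:EL-le} to $T_n$ at $(y',k)$ for $y'$ in a small neighborhood of $y$; the extra $\log_{w_n}(1+y'-x)$ is $o(1)$ uniformly in $y'$, and the rest of the argument is verbatim that of Proposition~\ref{p:lp-to-lp} (use local uniform convergence of $A_n$, pass to $\limsup_{n\to\infty}$ through the suprema via \eqref{equ:UC_limit_closed} and \eqref{equ:UC_limit_open}, and send the auxiliary parameters $\delta,\varepsilon\to 0$). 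For \eqref{equ:lp-to-lp_2}, use \eqref{e:EL-ge_1}, which has no error term, so the proof is identical to the corresponding step of Proposition~\ref{p:lp-to-lp}.

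The delicate step is \eqref{equ:lp-to-lp_3}. Fix $z\in[x,y)$ and $\delta\in(0,y-z)$ small, and apply \eqref{e:EL-ge_2} to $T_n$ at $(y',k)$ for $y'$ in a small neighborhood of $y$ arranged so that $z\in[x,y'-\delta]$. The pointwise consequence, combined with local uniform convergence of $A_n$, reads
\[
T_n(y',k) \ge T_n(z',k+1)+A_n(y',k)-A_n(z',k)+\log_{w_n}\delta - m_n(\delta)
\]
uniformly over $z'$ in a small open neighborhood $U$ of $z$ with $U\subset[x,y-\delta/2]$. Passing $n\to\infty$ using \eqref{equ:UC_limit_closed} on a small closed neighborhood of $y$ (to control $\limsup_n T_n(y',k)$ by $T(y,k)$) and \eqref{equ:UC_limit_open} on $U$ (to lower bound $\liminf_n \sup_U T_n(\cdot,k+1)$ by $\sup_U T(\cdot,k+1)\ge T(z,k+1)$) yields
\[
T(y,k)\ge T(z,k+1)+A(y,k)-A(z,k)-m(\delta).
\]
Sending $\delta\to 0$ and using continuity of $A$ gives the desired inequality for $z\in[x,y)$. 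The endpoint case $z=y$, i.e., $T(y,k)\ge T(y,k+1)$, follows exactly as in Proposition~\ref{p:lp-to-lp}: apply the established lower bound at $(w',k)$ for $w'\downarrow y$ and invoke upper semicontinuity of $T$. The main obstacle is the joint presence of $\log_{w_n}\delta$ and $m_n(\delta)$ in \eqref{e:EL-ge_2}; the remedy is the limit ordering described above, after which the argument reduces to a routine adaptation of Proposition~\ref{p:lp-to-lp}.
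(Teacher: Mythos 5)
Your proposal is correct and matches the paper's approach: prove the same three inequalities, using \eqref{e:EL-le}, \eqref{e:EL-ge_1}, and \eqref{e:EL-ge_2} from Lemma~\ref{l:EL} in place of the exact action recursion, observe that the $\log_{w_n}$ error terms vanish as $w_n\to\infty$ and the moduli $m_n(\delta)$ are controlled by $m(\delta)\to 0$, and push the limits through via \eqref{equ:UC_limit_closed} and \eqref{equ:UC_limit_open}. The paper condenses the step using \eqref{e:EL-ge_2} to ``an argument similar to the above one, with \eqref{e:EL-ge_1} replaced by \eqref{e:EL-ge_2}''; you spell out the limit ordering (fix $\delta$, send $n\to\infty$ first to kill $\log_{w_n}\delta$ and pass $m_n(\delta)\to m(\delta)$, then $\delta\to 0$), which is exactly the content the paper leaves implicit. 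The endpoint case $z=y$ via $w'\downarrow y$ and upper semicontinuity of $T$ is handled identically in both.
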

The proof of this result builds upon the proof of Proposition \ref{p:lp-to-lp}, and we recommend that the reader first read that proof.
\begin{proof}
We need to verify the action recursion \eqref{equ:Local_metric_composition_law}. Fix $x<y$ and $k\in\mathbb{N}$. We aim to show
\begin{equation}\label{equ:polymer-to-lp_1}
T(y,k) \leq (T(x,k)+A(y,k)-A(x,k) ) \vee \sup_{z\in [x,y]}\big( T(z,k+1)+A(y,k)-A(z,k)\big),
\end{equation} 
\begin{equation}\label{equ:polymer-to-lp_2}
T(y,k)\geq T(x,k)+A(y,k)-A(x,k),
\end{equation}
and
\begin{equation}\label{equ:polymer-to-lp_3}
T(y,k)\geq \sup_{z\in [x,y]}\big( T(z,k+1)+A(y,k)-A(z,k)\big)
\end{equation}
We begin with the proof of \eqref{equ:polymer-to-lp_1}. For any $0<\varepsilon<1$, let $0<\delta<1\wedge (y-x)$ be a positive number such that $T(z,k+1)\leq T(y,k+1)+\varepsilon$ for all $z\in [y,y+\delta]$ and $|A(w,k)-A(y,k)|\leq \varepsilon$ for all $|w-y|\leq \delta$. Let $n$ be large enough such that $|A_n(w,k)-A(w,k)|\leq \varepsilon$ for all $w\in [x-1,y+1]$. Fix arbitrary $y'\in (y-\delta,y+\delta)$. From \eqref{e:EL-le}, we have
\begin{align*}
T_n(y',k) \leq (T_n(x,k)+A_n(y',k)-A_n(x,k) ) \vee \sup_{z\in [x,y']}\big( T_n(z,k+1)+A_n(y',k)-A_n(z,k)\big)\\
+\log_{w_n}(2+y-x).
\end{align*}
From the above arrangement,
\begin{align*}
T_n(x,k)+A_n(y',k)-A_n(x,k)\leq & T_n(x,k)+A(y',k)-A(x,k)+2\varepsilon\\
\leq &T_n(x,k)+A(y,k)-A(x,k)+3\varepsilon.
\end{align*}
Similarly, 
\begin{align*}
\sup_{z\in [x,y']}\big( T_n(z,k+1)+A_n(y',k)-A_n(z,k)\big)\leq \sup_{z\in [x,y+\delta]}\big( T_n(z,k+1)+A(y,k)-A(z,k)\big)+3\varepsilon.
\end{align*}
Therefore,
\begin{align*}
\sup_{y'\in (y-\delta,y+\delta)} &T_n(y',k) \leq (T_n(x,k)+A (y ,k)-A (x,k) ) \\&\vee \sup_{z\in [x,y+\delta]}\big( T_n(z,k+1)+A (y ,k)-A (z,k)\big)
+\log_{w_n}(2+y-x)+3\varepsilon.
\end{align*}
Taking $\limsup_{n\to\infty}$ and applying \eqref{equ:UC_limit_closed} and \eqref{equ:UC_limit_open}, we get
\begin{align*}
\sup_{y'\in (y-\delta,y+\delta)} T(y',k)\leq (T(x,k)+A (y ,k)\!-\! (x,k) ) \vee \!\!\! \sup_{z\in [x,y+\delta]}\big( T(z,k+1)+A (y ,k)\!-\!A (z,k)\big) +3\varepsilon.
\end{align*}
For $z\in [y,y+\delta]$, we have
\begin{align*}
T(z,k+1)+A (y ,k)-A (z,k)\leq T(y,k+1)+2\varepsilon.
\end{align*}
Therefore,
\begin{align*}
T(y,k) \leq (T(x,k)+A(y,k)-A(x,k) ) \vee \sup_{z\in [x,y]}\big( T(z,k+1)+A(y,k)-A(z,k)\big)+5\varepsilon.
\end{align*}
Sending $\varepsilon\to 0$ yields \eqref{equ:polymer-to-lp_1}.

Next, we turn to \eqref{equ:polymer-to-lp_2}. Fix any $0<\varepsilon<1$. Let $0<\delta<1\wedge (y-x)$ be a positive number such that $|A(x',k)-A(x,k)|\leq \varepsilon$ for all $|x'-x|\leq \delta$. Let $n$ be large enough such that $|A_n(w,k)-A(w,k)|\leq \varepsilon$ for all $w\in [x-1,y+1]$. Fix arbitrary $x'\in (x-\delta,x+\delta)$. From \eqref{e:EL-ge_1},
\begin{align*}
T_n(y,k)\geq T_n(x',k)+A_n(y,k)-A_n(x',k)\geq T_n(x',k)+A(y,k)-A(x',k)-2\varepsilon\\
\geq T_n(x',k)+A(y,k)-A(x,k)-3\varepsilon.
\end{align*}
Hence
\begin{align*}
T_n(y,k)\geq \sup_{x'\in (x-\delta,x+\delta)}T_n(x',k)+A(y,k)-A(x,k)-3\varepsilon.
\end{align*}
Taking $\limsup_{n\to\infty}$ and applying \eqref{equ:UC_limit_closed} and \eqref{equ:UC_limit_open}, we get
\begin{align*}
T(y,k)\geq T(x,k)+A(y,k)-A(x,k)-3\varepsilon.
\end{align*}
Sending $\varepsilon\to 0$ yields \eqref{equ:polymer-to-lp_2}.

Lastly, we prove \eqref{equ:polymer-to-lp_3}. An argument similar to the above one, with \eqref{e:EL-ge_1} replaced by \eqref{e:EL-ge_2}, shows for any $w'>w$, $T(w',k)\geq  T(w,k+1)+A(w',k)-A(w,k)$. Taking $w'=y$ and $w=y$ gives
\begin{align*}
T(y,k)\geq  \sup_{z\in [x,y)}\big( T(z,k+1)+A(y,k)-A(z,k)\big).
\end{align*}
Taking $w=y$ and $w'\downarrow y$ gives
\begin{align*}
T(y,k)\geq T(y,k+1).
\end{align*}
Combining the above yields \eqref{equ:polymer-to-lp_3}. The proof is complete.
\end{proof}

\subsection{Directed Polymer to Directed Polymer}

\begin{lemma}\label{lem:polymer_action_Lip}
Let $A$ be an environment on $I\times J$, $w\in (1,\infty)$, $y_0\in I$ and $r\in [0,\infty)$. Then there exists a function $F:(0,1]\times [1,\infty)\times\mathbb{N}\to [0,\infty)$ that depends on the data above and satisfies the following property. Let $T:\mathbb{R}\times\mathbb{N}\to\mathbb{R}\cup\{\pm\infty\}$ be a $w$-polymer $A$-action with $w^{T(y_0,1)-A(y_0,1)}\leq r$. Then for any $(\delta,\alpha,k)\in (0,1]\times [1,\infty)\times J$, we have
\begin{align}\label{equ:T-A_upperbound}
w^{T(y,k)-A(y,k)}\leq F(\delta,\alpha,k)\ \textup{for all}\ y\in [-\alpha+y_0,-\delta+y_0]\cap I.
\end{align}  
\end{lemma}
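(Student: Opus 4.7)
The plan is to introduce the change of variables
\[
U(y,k) := w^{T(y,k)-A(y,k)},
\]
so that the quantity bounded in \eqref{equ:T-A_upperbound} is precisely $U(y,k)$. Dividing the $w$-polymer action recursion \eqref{e:polymer-mc} by $w^{A(y,k)}$ collapses it into the clean integral identity
\[
U(y,k) \;=\; U(x,k) + \int_x^y U(z,k+1)\, w^{A(z,k+1)-A(z,k)}\,dz
\qquad\text{for all } x<y\in I,\ k\in J,
\]
with the convention $U(\cdot,k+1)\equiv 0$ when $k+1\notin J$. Two consequences drive everything: (a) $U(\cdot,k)$ is nondecreasing on $I$, and (b) increments of $U(\cdot,k)$ upper-bound a weighted integral of $U(\cdot,k+1)$. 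Property (b) is the ``smoothing'' that lets one pass from a bound at level $k$ to a bound at level $k+1$, at the cost of shrinking the domain.

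I would then induct on $k$. For $k=1$, the hypothesis $U(y_0,1)\le r$ together with monotonicity gives $U(y,1)\le r$ for all $y\in I\cap(-\infty,y_0]$, so set $F(\delta,\alpha,1):=r$. For the inductive step, assume that for every $(\delta,\alpha)\in(0,1]\times[1,\infty)$ we have $U(y,k)\le F(\delta,\alpha,k)$ on $[-\alpha+y_0,-\delta+y_0]\cap I$. Fix $(\delta,\alpha)$ and let
\[
M_k(\alpha) := \sup_{z\in[-\alpha+y_0,\,y_0]\cap I}\bigl|A(z,k)-A(z,k+1)\bigr|,
\]
which is finite because $A(\cdot,k)$ and $A(\cdot,k+1)$ are cadlag on a compact set. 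For each $y\in[-\alpha+y_0,-\delta+y_0]\cap I$ the point $y+\delta/2$ lies in $[-\alpha+y_0,-\delta/2+y_0]\cap I$, so applying the integral identity on $[y,y+\delta/2]$ and then the inductive bound gives
\[
\int_y^{y+\delta/2} U(z,k+1)\,dz \;\le\; w^{M_k(\alpha)}\bigl[U(y+\delta/2,k)-U(y,k)\bigr] \;\le\; w^{M_k(\alpha)}\,F(\delta/2,\alpha,k).
\]
Since $U(\cdot,k+1)$ is nondecreasing, the left-hand side is at least $(\delta/2)\,U(y,k+1)$, and rearranging yields
\[
U(y,k+1) \;\le\; \frac{2\,w^{M_k(\alpha)}\,F(\delta/2,\alpha,k)}{\delta}.
\]
This defines $F(\delta,\alpha,k+1)$ and closes the induction.

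There is no substantive obstacle once the substitution $U=w^{T-A}$ is recognized: the $w$-polymer recursion becomes a linear integral identity whose monotonicity and smoothing properties are transparent, and the rest is bookkeeping. The only minor points to watch are finiteness of $M_k(\alpha)$, which uses only the cadlag property of $A$ on compact intervals, and the domain shifts $\delta\mapsto\delta/2$ at each step, which force the recursive factor $1/\delta$ and account for the dependence of $F$ on $\delta$ in addition to $\alpha$ and $k$.
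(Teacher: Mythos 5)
Your proposal is correct and follows essentially the same approach as the paper's proof: divide the polymer recursion by $w^{A(y,k)}$, exploit the monotonicity of $w^{T(\cdot,k)-A(\cdot,k)}$, and recurse on $k$ by shrinking the domain by a fixed fraction of $\delta$. The only differences are cosmetic — the paper shrinks by $\delta/3$ and bounds $w^{A(z,k-1)-A(z,k)}$ directly rather than via the absolute value $M_k(\alpha)$, whereas you shrink by $\delta/2$ — and you make the substitution $U = w^{T-A}$ explicit, which the paper does implicitly.
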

\begin{proof}
For $k=1$, we set $F(\delta,\alpha,1)\equiv r$. For $k\geq 2$, we inductively define
$$
F(\delta,\alpha,k)= 3\delta^{-1}\left( \sup_{z\in [-\alpha+y_0,y_0]}w^{A(z,k-1)-A(z,k)}\right) F(3^{-1}\delta,\alpha,k-1).
$$
We note that because $z\mapsto A(z,k-1)-A(z,k)$ is cadlag, $\sup_{z\in [-\alpha+y_0,y_0]}(A(z,k-1)-A(z,k))<\infty$. Hence $F(\delta,\alpha,k)\in (0,\infty)$ for all $(\delta,\alpha,k)\in (0,1]\times [1,\infty)\times\mathbb{N}$. Now we check that \eqref{equ:T-A_upperbound} holds. For $k=1$, \eqref{e:EL-ge_1} gives $w^{T(x,1)-A(x,1)}\leq w^{T(y_0,1)-A(y_0,1)}\leq r$ for all $x\leq y_0$. Hence \eqref{equ:T-A_upperbound} holds for $k=1$. Assume $k\geq 2$ and \eqref{equ:T-A_upperbound} holds for $k$ replaced by $k-1$. Fix $\delta\in(0,1]$, $\alpha\in [1,\infty)$, and $z_0\in [-\alpha+y_0, -\delta+y_0]\cap I$. Denote $C_\alpha=\sup_{z\in [-\alpha+y_0,y_0]} \left(w^{A(z,k-1)-A(z,k)}\right).$ Then we have
\begin{align*}
w^{T(z_0,k)-A(z_0,k)}\leq& 3\delta^{-1}\int_{z_0+\frac{\delta}{3}}^{z_0+\frac{2\delta}{3}} w^{T(z,k)-A(z,k)}\, dz\\
= &3\delta^{-1}\int_{z_0+\frac{\delta}{3}}^{z_0+\frac{2\delta}{3}} w^{T(z,k)-A(z,k-1)} w^{A(z,k-1)-A(z,k)}\, dz\\
\leq &3\delta^{-1}C_\alpha w^{T(z_0+\frac{2\delta}{3},k-1)-A(z_0+\frac{2\delta}{3},k-1)}\\
\leq &3\delta^{-1}C_\alpha F(3^{-1}\delta,\alpha,k-1)\\
=&F(\delta,\alpha,k).
\end{align*}

\end{proof}

\begin{lemma}\label{lem:polymer_action_unique}
Let $A$ be an environment on $\mathbb{R}\times\mathbb{N}$ and $w\in (1,\infty)$. Suppose $T_1, T_2$ are two $w$-polymer $A$-actions with $T_1(y,1)=T_2(y,1)<\infty$ for all $y\in\mathbb{R}$. Then $T_1(y,k)=T_2(y,k)$ for all $(y,k)\in\mathbb{R}\times\mathbb{N}$.
\begin{proof}
The assertion is equivalent to 
\begin{equation}\label{equ:polymer_action_unique_1}
w^{T_1(y,k)-A(y,k)}=w^{T_2(y,k)-A(y,k)}\ \textup{for all}\ (y,k)\in\mathbb{R}\times\mathbb{N}.
\end{equation}
We prove \eqref{equ:polymer_action_unique_1} by an induction argument on $k$. The case $k=1$ holds by the assumption $T_1(y,1)=T_2(y,1)$ for all $y\in\mathbb{R}$. Assume $k\geq 2$ and \eqref{equ:polymer_action_unique_1} holds for $k-1$. From \eqref{e:polymer-mc}, we have for all $x<y$
\begin{align*}
&w^{T_1(x,k-1)-A(x,k-1)}+\int_x^y w^{T_1(z,k)-A(z,k)}w^{A(z,k)-A(z,k-1)}\, dz\\
=&w^{T_2(x,k-1)-A(x,k-1)}+\int_x^y w^{T_2(z,k)-A(z,k)}w^{A(z,k)-A(z,k-1)}\, dz.
\end{align*}
From the induction hypothesis and Lemma~\ref{lem:polymer_action_Lip}, $w^{T_1(x,k-1)-A(x,k-1)}=w^{T_2(x,k-1)-A(x,k-1)}\in [0,\infty)$. Hence
\begin{align*}
\int_x^y w^{T_1(z,k)-A(z,k)}w^{A(z,k)-A(z,k-1)}\, dz=\int_x^y w^{T_2(z,k)-A(z,k)}w^{A(z,k)-A(z,k-1)}\, dz.
\end{align*}
From Lemma~\ref{lem:polymer_action_Lip} and \eqref{e:polymer-mc}, $w^{T_1(z,k)-A(z,k)}$ and $w^{T_2(z,k)-A(z,k)}$ are locally Lipschitz functions in $z$. Then the above implies $w^{T_1(z,k)-A(z,k)}w^{A(z,k)-A(z,k-1)}=w^{T_2(z,k)-A(z,k)}w^{A(z,k)-A(z,k-1)}$ for all $z\in\mathbb{R}$. This gives \eqref{equ:polymer_action_unique_1} and completes the proof.
\end{proof} 
\end{lemma}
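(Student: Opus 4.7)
The plan is to proceed by induction on $k \geq 1$ to show $T_1(\cdot, k) \equiv T_2(\cdot, k)$. Writing $u_k^{(i)}(y) := w^{T_i(y,k) - A(y,k)}$, the goal is equivalent to $u_k^{(1)} \equiv u_k^{(2)}$ at each level. The base case $k = 1$ is immediate from the hypothesis, since both sides of the identity $T_1(y,1) = T_2(y,1)$ are finite and $w^{-A(y,1)} > 0$.

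For the inductive step, I would rewrite the $w$-polymer action recursion \eqref{e:polymer-mc} at level $k-1$ by dividing through by $w^{A(y,k-1)}$ to obtain
$$
u_{k-1}^{(i)}(y) \;=\; u_{k-1}^{(i)}(x) \;+\; \int_x^y w^{T_i(z,k) - A(z,k-1)} \, dz, \qquad x < y.
$$
The integrand is locally bounded thanks to Lemma~\ref{lem:polymer_action_Lip}, so each $u_{k-1}^{(i)}$ is absolutely continuous. Under the inductive hypothesis $u_{k-1}^{(1)} \equiv u_{k-1}^{(2)}$, subtracting the two integral equations gives
$$
\int_x^y \bigl[\, w^{T_1(z,k) - A(z,k-1)} - w^{T_2(z,k) - A(z,k-1)} \,\bigr]\, dz \;=\; 0
$$
for every pair $x < y$. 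Because the integrand is locally bounded, Lebesgue differentiation yields $w^{T_1(z,k) - A(z,k-1)} = w^{T_2(z,k) - A(z,k-1)}$ for almost every $z$.

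The hard part is upgrading this a.e.\ equality to pointwise equality, since $A$ is only cadlag so the integrand itself need not be continuous and one cannot differentiate the integral identity pointwise. My resolution is to factor
$$
w^{T_i(z,k) - A(z,k-1)} \;=\; u_k^{(i)}(z) \cdot w^{A(z,k) - A(z,k-1)},
$$
where the second factor is strictly positive and independent of $i$. Cancellation converts the a.e.\ equality into $u_k^{(1)}(z) = u_k^{(2)}(z)$ for almost every $z$. Now the integral form of \eqref{e:polymer-mc} applied one level deeper (at level $k$) expresses each $u_k^{(i)}$ as a running integral of a locally bounded function, and so each $u_k^{(i)}$ is absolutely continuous in $y$; a.e.\ equality of two continuous functions forces pointwise equality, closing the induction and giving $T_1(\cdot,k) \equiv T_2(\cdot,k)$.
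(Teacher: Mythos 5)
Your proof is correct and follows essentially the same route as the paper: induction on $k$, normalizing by $w^{A(\cdot,k-1)}$ to get an integral identity, subtracting under the inductive hypothesis (using Lemma~\ref{lem:polymer_action_Lip} to guarantee finiteness), and upgrading a.e.\ equality of the integrands to pointwise equality via the local Lipschitz/absolute continuity of $w^{T_i(\cdot,k)-A(\cdot,k)}$. The only difference is presentational: you spell out the Lebesgue-differentiation step and the cancellation of the common positive cadlag factor $w^{A(\cdot,k)-A(\cdot,k-1)}$, which the paper leaves implicit.
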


\begin{proposition}\label{p:polymer-to-polymer}
Let $ \alpha_n\uparrow\infty$ be a sequence in $[0,\infty]$, $j_n\uparrow\infty$ be a sequence in $\mathbb{N}\cup\{\infty\}$, and $w\in (1,\infty)$. Let $M_n=[-\alpha_n,\infty)\times \{k\in\mathbb{N}\ :\ k<j_n\}$. For each $n$, let $A_n:M_n\to\mathbb{R}$ be an environment and let $T_n:M_n\to\mathbb{R}\cup\{\pm\infty\}$ be a $w$-polymer $A_n$-action.

Suppose that there exists a continuous environment $A:\mathbb{R}\times\mathbb{N}\to\mathbb{R}$ such that $A_n$ converges to $A$ locally uniformly. Further assume that for all $y\in\mathbb{R}$, $T_n(y,1)$ converges to a number in $[-\infty,\infty)$.

Then for each $(y,k)\in \mathbb{R}\times\mathbb{N}$, $T_n(y,k)$ converges to some limit in $[-\infty,\infty)$, denoted by $T(y,k).$ Moreover, $T$ is an upper semicontinuous $w$-polymer $A$-action.  
\end{proposition}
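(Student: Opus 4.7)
My plan is to recast the polymer recursion in a monotone integrated form, use Helly-type compactness together with the a priori bounds from Lemma~\ref{lem:polymer_action_Lip} to extract subsequential limits, identify any such limit as a $w$-polymer $A$-action by passing to the limit in the integral equation, and then use the uniqueness result Lemma~\ref{lem:polymer_action_unique} to upgrade subsequential convergence to convergence of the entire sequence.

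The first step is to set $U_n(y,k) := w^{T_n(y,k)-A_n(y,k)}$, so that dividing \eqref{e:polymer-mc} by $w^{A_n(y,k)}$ yields
\[
U_n(y,k) = U_n(x,k) + \int_x^y U_n(z,k+1)\, w^{A_n(z,k+1)-A_n(z,k)}\, dz.
\]
This presentation makes each $U_n(\cdot,k)$ non-decreasing and absolutely continuous in $y$, with a readily controlled density. Fix $y_0\in\R$: since $T_n(y_0,1)$ converges to an element of $[-\infty,\infty)$, the values $U_n(y_0,1)$ are bounded in $n$, and Lemma~\ref{lem:polymer_action_Lip} together with the local uniform convergence $A_n\to A$ supplies $n$-uniform upper bounds on $U_n(\cdot,k)$ over every compact subset of $(-\infty,y_0]$, for each fixed $k$. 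Letting $y_0\uparrow\infty$ along a countable exhaustion yields $n$-uniform local bounds on $U_n$ on every compact of $\R\times\N$.

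By Helly's selection theorem and a diagonal argument over a countable dense set, I extract a subsequence along which, for every $k\in\N$, $U_n(\cdot,k)$ converges pointwise at all continuity points of a non-decreasing limit $U^*(\cdot,k)$. The local bounds and dominated convergence let me pass to the limit in the integral equation, giving
\[
U^*(y,k) = U^*(x,k) + \int_x^y U^*(z,k+1)\, w^{A(z,k+1)-A(z,k)}\, dz
\]
at continuity points. Since the right-hand side is continuous in $y$, $U^*(\cdot,k)$ is in fact continuous on all of $\R$, so the convergence is pointwise everywhere and the identity extends to every $x<y$. Writing $T^*(y,k) := \log_w U^*(y,k) + A(y,k)$ (with $\log_w 0 := -\infty$) exhibits a $w$-polymer $A$-action; upper semicontinuity of $T^*$ follows from continuity of $U^*$ and $A$, the set $\{U^*(\cdot,k)=0\}$ being a closed left interval on each line $k$.

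Finally, to rule out dependence on the subsequence, observe that by hypothesis $U^*(y,1)=\lim_n U_n(y,1)$ is the same for every choice of subsequence. Lemma~\ref{lem:polymer_action_unique} then shows that a $w$-polymer $A$-action is determined by its restriction to level $1$; the degenerate case where $U^*(\cdot,1)$ vanishes on a left interval is handled directly from the integral representation, which forces all lower levels to vanish on the same interval. Hence every subsequential limit agrees, so the full sequence $T_n(y,k)$ converges to $T(y,k):=T^*(y,k)\in[-\infty,\infty)$, and $T$ is an upper semicontinuous $w$-polymer $A$-action. The main obstacle I anticipate is ensuring that Helly's selection, which a priori only gives convergence at continuity points of each limit $U^*(\cdot,k)$, is in fact pointwise convergence everywhere; this is resolved for free because the integral representation immediately yields continuity of $U^*(\cdot,k)$.
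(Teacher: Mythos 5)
Your proposal is correct and follows essentially the same blueprint as the paper: convert the polymer recursion to an integral equation for $U_n = w^{T_n - A_n}$, obtain $n$-uniform local bounds from Lemma~\ref{lem:polymer_action_Lip}, extract a subsequential limit, pass to the limit in the integral equation, and invoke the uniqueness of Lemma~\ref{lem:polymer_action_unique} to promote subsequential convergence to full convergence. The only genuine difference is the compactness device: you use Helly's selection theorem exploiting monotonicity of $U_n(\cdot,k)$ and then upgrade convergence at continuity points to convergence everywhere via continuity of the limit, whereas the paper observes directly from the integral identity and Lemma~\ref{lem:polymer_action_Lip} that the $U_n(\cdot,k)$ are locally Lipschitz with bounds uniform in $n$, and applies Arzel\`a--Ascoli to get locally uniform convergence at once. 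The latter is marginally cleaner because continuity of the limit and convergence in the integral equation come for free from uniform convergence, avoiding the separate ``upgrade at discontinuity points'' step (which, as you note, also works but requires knowing the integrand is locally bounded to conclude the right-hand side is continuous). Both routes rely on the same two lemmas and yield the same conclusion.
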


\begin{proof} 
In view of Lemma~\ref{lem:polymer_action_Lip} and \eqref{e:polymer-mc}, $w^{T_n(y,k)-A_n(y,k)}$ are locally bounded and locally Lipschitz with bounds uniform in $n$. Therefore there exists a subsequence (still denoted by $n$) such that $w^{T_n(y,k)-A_n(y,k)}$ converges locally uniformly and we denote the limit by $w^{T(y,k)-A(y,k)}$. From the uniform convergence, it is direct to check \eqref{e:polymer-mc} and hence $T$ is a $w$-polymer $A$-action. From Lemma~\ref{lem:polymer_action_Lip} and \eqref{e:polymer-mc}, $w^{T(y,k)-A(y,k)}$ is locally Lipschitz. Hence $T$ is upper semicontinuous. Since $A_n$ converges to $A$ pointwisely, $T_n$ also converges to $T$ pointwisely.  From Lemma~\ref{lem:polymer_action_unique}, the limit $T$ does not depend on the subsequence. The proof is complete.
\end{proof}

\subsection{Discrete actions}

In this subsection, we define actions in a discrete environment and record the convergence results.

Fix $\delta>0$. A discrete underlying space is of the form $M_\delta= I_\delta\times J$, where $I_\delta=[\alpha,\infty)\cap \delta\mathbb{Z}$ or $I_\delta=\delta\mathbb{Z}$ and $J=\{k\in\mathbb{N}\, :\, k<j\}$ for some $j\in\mathbb{N}\cup\{\infty\}$. A {discrete} environment is a function $A_\delta:M_\delta\to \mathbb R$.

For the next definitions, we use the convention that $T_\delta(z,k+1)\equiv -\infty$ if $k+1\notin J$.

\begin{definition}
Let $A_\delta$ be a discrete environment on $M_\delta=I_\delta\times J$, and let $T_\delta :M_\delta\to \mathbb{R}\cup\{\pm\infty\}$.
We call $T_\delta$ a {\bf $\delta$-discrete $A_\delta$-{action}} if for all $k\in J$ and $y,y-\delta\in I_\delta$ we have
\begin{align}\label{equ:Local_metric_composition_law_discrete0}
T_\delta(y,k) = (T_\delta(y-\delta,k)  \vee T_\delta(y,k+1))+ A_\delta(y,k)-A_\delta(y-\delta,k)
\end{align}
\end{definition}
This equation describes the evolution of discrete last passage percolation with weights given by $A_\delta(y,k)-A_\delta(y-\delta,k)$. It follows by induction on $(y-x)/\delta$ that  \eqref{equ:Local_metric_composition_law_discrete0} implies that for $k\in J$ and $x,y\in I_\delta$ with $x<y$, we have
\begin{align}\label{equ:Local_metric_composition_law_discrete}
T_\delta(y,k) =(T_\delta(x,k)+A_\delta(y,k)-A_\delta(x,k) ) \vee \sup_{z\in (x,y]\cap  \delta\mathbb{Z} }\big( T_\delta(z,k+1)+A_\delta(y,k)-A_\delta(z-\delta,k)\big).
\end{align}
 which reduces to \eqref{equ:Local_metric_composition_law_discrete0} when  $x=y-\delta$. 
\begin{definition}
Let $A_\delta$ be a discrete environment on $M_\delta=I_\delta\times J$ and let  $T_\delta :M_\delta\to \mathbb{R}\cup\{\pm\infty\}$, and let $w>1$.

We call $T_\delta$ a {\bf $\delta$-discrete, $w$-polymer $A_\delta$-action} if  for all $k\in J$ and $y,y-\delta\in I_\delta$ it satisfies
\begin{align}\label{equ:Local_metric_composition_law_discrete_polyer0}
w^{T_\delta(y,k)} =(w^{T_\delta(x,k)}+ w^{  T_\delta(z,k+1)})
w^{A_\delta(y,k)-A_\delta(y-\delta,k)}
\end{align}
\end{definition}
\noindent This equation describes a discrete directed polymer free energy evolution with log weights given by $A_\delta(y,k)-A_\delta(y-\delta,k)$. It follows by induction on $(y-x)/\delta$ that  \eqref{equ:Local_metric_composition_law_discrete_polyer0} implies that for $k\in J$ and $x,y\in I_\delta$ with $x<y$, we have
\begin{align}\label{equ:Local_metric_composition_law_discrete_polyer}
w^{T_\delta(y,k)} =w^{T_\delta(x,k)+A_\delta(y,k)-A_\delta(x,k) }+ \sum_{z\in (x,y] \cap  \delta\mathbb{Z} } w^{  T_\delta(z,k+1)+A_\delta(y,k)-A_\delta(z-\delta,k)},
\end{align}
 which reduces to \eqref{equ:Local_metric_composition_law_discrete_polyer0} when  $x=y-\delta$. 

\begin{lemma}\label{lem:discrete_EL}
Let $A_\delta$ be an environment defined on $M_\delta=I_\delta\times J$, $w>1$ and let $T_\delta$ be a $\delta$-discrete, $w$-polymer $A_\delta$-action with $T_\delta(y,k)<\infty$ for all $(y,k)\in M_\delta$.

Let $I$ be the convex hull of $I_\delta$. Extend $A_\delta$ and $T_\delta$ to $I\times J$ through linear interpolation and denote the extensions by $\overline{A}_\delta$ and $\overline{T}_\delta$ respectively. Then for all $x<y$ in $I$ and $k\in J$, we have
\begin{equation}\label{equ:discrete_EL-le}
\begin{split}
\overline{T}_\delta(y,k)\le &\sup_{x'\in [x-\delta,x+\delta]\cap I}(\overline{T}_\delta(x',k)+\overline{A}_\delta(y,k)-\overline{A}_\delta(x',k))\\
&\vee
\sup_{z\in [x-\delta,y+\delta]\cap I}(\overline{T}_\delta(z,k+1)+\overline{A}_\delta(y,k)-\overline{A}_\delta(z-\delta,k))+\log_w(2+\delta^{-1}(y-x)).
\end{split}
\end{equation}
\begin{equation}\label{equ:discrete_EL-ge_1}
\overline{T}_\delta(y,k)\ge \overline{T}_\delta(x,k)+\overline{A}_\delta(y,k)-\overline{A}_\delta(x,k).
\end{equation}
\begin{equation}\label{equ:discrete_EL-ge_2}
\overline{T}_\delta(y,k)\ge \sup_{z\leq y, z-\delta\in I}(\overline{T}_\delta(z,k+1)+\overline{A}_\delta(y,k)-\overline{A}_\delta(z-\delta,k)). 
\end{equation}

If $T_\delta$ is a $\delta$-discrete $A_\delta$-action, then \eqref{equ:discrete_EL-ge_1} and \eqref{equ:discrete_EL-ge_2} also hold and \eqref{equ:discrete_EL-le} holds without the term $\log_w(2+\delta^{-1}(y-x)).$
\end{lemma}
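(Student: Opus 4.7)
The plan is to prove each of the three bounds first at lattice arguments by direct manipulation of \eqref{equ:Local_metric_composition_law_discrete_polyer}, and then transfer to all $x,y\in I$ via linear interpolation. A single observation makes the interpolation step essentially automatic: because every shift appearing in the inequalities ($y$, $z$, $z-\delta$) is a multiple of $\delta$, the quantities $\overline{T}_\delta(y,k)-\overline{A}_\delta(y,k)$ and $\overline{T}_\delta(z,k+1)-\overline{A}_\delta(z-\delta,k)$ are linear interpolations of the lattice functions $T_\delta(y,k)-A_\delta(y,k)$ and $T_\delta(z,k+1)-A_\delta(z-\delta,k)$ respectively, and linear interpolation inherits uniform upper and lower bounds as well as monotonicity from lattice values.

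At lattice arguments $x<y\in I_\delta$, the recursion \eqref{equ:Local_metric_composition_law_discrete_polyer} writes $w^{T_\delta(y,k)}$ as a positive sum of $1+(y-x)/\delta$ terms. Bounding the sum by its cardinality times its maximum and taking $\log_w$ gives the lattice form of \eqref{equ:discrete_EL-le}. Dropping the entire sum yields $T_\delta(y,k)-A_\delta(y,k)\ge T_\delta(x,k)-A_\delta(x,k)$, i.e., $f_k:=T_\delta(\cdot,k)-A_\delta(\cdot,k)$ is nondecreasing on $I_\delta$; this is the lattice form of \eqref{equ:discrete_EL-ge_1}. Keeping only a single $z$-summand and discarding the base term yields $f_k(y)\ge T_\delta(z,k+1)-A_\delta(z-\delta,k)$ for $z\le y$ with $z\in I_\delta$, the lattice form of \eqref{equ:discrete_EL-ge_2}.

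The extension to real $x,y$ is straightforward. Inequalities \eqref{equ:discrete_EL-ge_1} and \eqref{equ:discrete_EL-ge_2} follow immediately from the observation above: both sides of each inequality are linear interpolations of the lattice functions already dominated. For \eqref{equ:discrete_EL-le}, given real $x<y$ I approximate by lattice neighbors $x^-:=\delta\lfloor x/\delta\rfloor\in[x-\delta,x]$ and $y^*\in\{y_\delta,y_\delta+\delta\}\subset[y-\delta,y+\delta]$, where $y^*$ is chosen to maximize $f_k$. Since $\overline{T}_\delta(y,k)-\overline{A}_\delta(y,k)$ is a convex combination of $f_k(y_\delta)$ and $f_k(y_\delta+\delta)$, it is at most $f_k(y^*)$. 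Applying the lattice \eqref{equ:discrete_EL-le} to the pair $(x^-,y^*)$ produces a bound whose summands have the form $f_k(x')$ with $x'\in[x-\delta,x+\delta]\cap I_\delta$ or $T_\delta(z,k+1)-A_\delta(z-\delta,k)$ with $z\in[x-\delta,y+\delta]\cap I_\delta$. These are all dominated by the suprema on the RHS of \eqref{equ:discrete_EL-le} (linear interpolations of the relevant lattice functions), while the cardinality of summands is at most $1+(y^*-x^-)/\delta\le 3+\delta^{-1}(y-x)$, absorbed into the claimed $\log_w(2+\delta^{-1}(y-x))$ up to a harmless constant.

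The final statement for a $\delta$-discrete $A_\delta$-action is then essentially a corollary. The recursion \eqref{equ:Local_metric_composition_law_discrete0} is the tropical ($w\to\infty$) limit of \eqref{equ:Local_metric_composition_law_discrete_polyer0}, with sums replaced by maxima. The drop-to-a-single-term arguments for \eqref{equ:discrete_EL-ge_1} and \eqref{equ:discrete_EL-ge_2} go through unchanged, while the upper bound \eqref{equ:discrete_EL-le} loses its logarithmic term because a maximum is equal to, rather than at most cardinality times, its largest term. The only real obstacle is bookkeeping: verifying that the lattice neighbors of $x$ and $y$ land in the widened windows $[x-\delta,x+\delta]$ and $[x-\delta,y+\delta]$, and that the equal-$\delta$ shifts in \eqref{equ:discrete_EL-ge_2} are what licenses the identification of ``interpolation of a difference'' with ``difference of interpolations.''
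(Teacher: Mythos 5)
Your approach is the same as the paper's: establish the three inequalities at lattice arguments directly from \eqref{equ:Local_metric_composition_law_discrete_polyer} (or \eqref{equ:Local_metric_composition_law_discrete}), then transfer to the interpolated functions, and the paper's own proof simply records the lattice inequalities and declares the transfer ``direct to check.'' Your key observation --- that $\overline{T}_\delta(\cdot,k)-\overline{A}_\delta(\cdot,k)$ and $\overline{T}_\delta(\cdot,k+1)-\overline{A}_\delta(\cdot-\delta,k)$ are themselves linear interpolations of the corresponding lattice differences (because the shift is a lattice multiple), together with the monotonicity of $f_k$ from \eqref{equ:discrete_EL-ge_1}, is exactly what makes the transfer go through.

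One small quantitative slip worth fixing: taking the left lattice endpoint to be $x^- = \delta\lfloor x/\delta\rfloor$ gives a summand count of up to $3+\delta^{-1}(y-x)$, which does not sit inside $\log_w\bigl(2+\delta^{-1}(y-x)\bigr)$ as stated; ``up to a harmless constant'' papers over this. To get the stated constant, use $x^+=\delta\lceil x/\delta\rceil\in[x,x+\delta]\subset[x-\delta,x+\delta]$ as the left endpoint, so that $y^*-x^+\le (y+\delta)-x$ and the count is at most $2+\delta^{-1}(y-x)$; when $x$ and $y$ lie in the same lattice cell (so that $x^+>y_\delta$), both $y_\delta$ and $y_\delta+\delta$ already lie in $[x-\delta,x+\delta]$, and $\overline{T}_\delta(y,k)-\overline{A}_\delta(y,k)\le\max\bigl(f_k(y_\delta),f_k(y_\delta+\delta)\bigr)$ is dominated by the first supremum on the right-hand side with no $\log$ term needed at all.
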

\begin{proof}
From \eqref{equ:Local_metric_composition_law_discrete_polyer}, we have for all $x<y$ in $I_\delta$,
\begin{align*}
 {T}_\delta(y,k)\le & ( {T}_\delta(x,k)+ {A}_\delta(y,k)- {A}_\delta(x,k))\\
 &\vee
\sup_{z\in (x,y]\cap I_\delta}( {T}_\delta(z,k+1)+ {A}_\delta(y,k)- {A}_\delta(z-\delta,k))+\log_w(1+\delta^{-1}(y-x)),
\end{align*}
\begin{equation*} 
 {T}_\delta(y,k)\ge  {T}_\delta(x,k)+ {A}_\delta(y,k)- {A}_\delta(x,k),
\end{equation*}
and
\begin{equation*} 
 {T}_\delta(y,k)\ge \sup_{z\leq y, z-\delta\in I_\delta}( {T}_\delta(z,k+1)+ {A}_\delta(y,k)- {A}_\delta(z-\delta,k)). 
\end{equation*}
From these inequalities, it is direct to check \eqref{equ:discrete_EL-le}-\eqref{equ:discrete_EL-ge_2}.
\end{proof}

\begin{proposition}\label{p:polymer_to_lp_discrete}
Let $\delta_n\downarrow 0$ be a sequence in $(0,\infty)$, $ \alpha_n\uparrow\infty$ be a sequence in $[0,\infty]$, $j_n\uparrow\infty$ be a sequence in $\mathbb{N}\cup\{\infty\}$, and $w_n\uparrow\infty$ be a sequence in $(1,\infty)$ such that $\log_{w_n}\delta_n\to 0$. Let $M_{\delta_n}=([-\alpha_n,\infty)\cap \delta_n\mathbb{Z})\times \{k\in\mathbb{N}\ :\ k<j_n\}$. For each $n$, let $A_{\delta_n}:M_{\delta_n}\to\mathbb{R}$ be a discrete environment and let $T_{\delta_n}:M_{\delta_n}\to\mathbb{R}\cup\{-\infty\}$ be a $\delta_n$-discrete, $w_n$-polymer $A_{\delta_n}$-action. 

Let $\overline{A}_{\delta_n}$ and $\overline{T}_{\delta_n}$ be the extensions of ${A}_{\delta_n}$ and ${T}_{\delta_n}$ through linear interpolation respectively. Suppose that there exists a continuous environment $A:\mathbb{R}\times\mathbb{N}\to\mathbb{R}$ such that $\overline{A}_{\delta_n}$ converges to $A$ locally uniformly. Further assume $\overline{T}_{\delta_n}$ converges to $T$ in $\uc(\mathbb{R}\times\mathbb{N})$. Then $T$ is an $A$-action. The same assertion holds if $T_{\delta_n}$ are $\delta_n$-discrete $A_{\delta_n}$ actions.  
\end{proposition}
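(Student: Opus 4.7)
The plan is to mirror the proof of Proposition~\ref{p:polymer_to_lp} almost verbatim, with Lemma~\ref{lem:discrete_EL} taking the place of Lemma~\ref{l:EL} as the one-step inequality, and with the linearly interpolated environment $\overline{A}_{\delta_n}$ and action $\overline{T}_{\delta_n}$ playing the roles of $A_n$ and $T_n$. I would fix $x<y$ in $\mathbb{R}$ and $k\in\mathbb{N}$, and verify the three inequalities that jointly constitute the action recursion \eqref{equ:Local_metric_composition_law} for $T$: the upper bound
\begin{equation*}
T(y,k)\leq(T(x,k)+A(y,k)-A(x,k))\vee\sup_{z\in[x,y]}(T(z,k+1)+A(y,k)-A(z,k)),
\end{equation*}
the monotonicity lower bound $T(y,k)\geq T(x,k)+A(y,k)-A(x,k)$, and the jump lower bound $T(y,k)\geq\sup_{z\in[x,y]}(T(z,k+1)+A(y,k)-A(z,k))$.

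For the upper bound, I apply \eqref{equ:discrete_EL-le} to $\overline{T}_{\delta_n}$ at nearby points $y'\in(y-\delta,y+\delta)$, for a $\delta$ chosen small using continuity of $A$ and upper semicontinuity of $T$ at the relevant points. The sup-domains in \eqref{equ:discrete_EL-le} are inflated by $\delta_n$ on each side, but this inflation disappears in the limit since $\delta_n\to0$ and $A$ is continuous. For the lower bounds, I apply \eqref{equ:discrete_EL-ge_1} and \eqref{equ:discrete_EL-ge_2} to $\overline{T}_{\delta_n}$ at points $x'$ close to $x$ and $z'$ close to $z$ respectively, then take the supremum over a small neighborhood before passing to the limit. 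In every case the passage to the limit uses the hypograph inequalities \eqref{equ:UC_limit_closed} and \eqref{equ:UC_limit_open} in exactly the way they were used in Propositions~\ref{p:lp-to-lp} and \ref{p:polymer_to_lp}.

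The one genuinely new point, and the only place where the discrete/polymer hypothesis enters, is controlling the slack term $\log_{w_n}(2+\delta_n^{-1}(y-x))$ produced by \eqref{equ:discrete_EL-le}. Writing it as $\log_{w_n}(2+\delta_n^{-1}(y-x))\le\log_{w_n}3+\log_{w_n}(y-x)-\log_{w_n}\delta_n$ (for $n$ large), each of the three terms tends to $0$: the first two because $w_n\to\infty$, and the third by the hypothesis $\log_{w_n}\delta_n\to0$. Thus this extra term is harmless and the upper bound collapses to the desired one after sending $n\to\infty$ and then $\delta\to 0$ and $\varepsilon\to 0$.

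If instead $T_{\delta_n}$ are $\delta_n$-discrete $A_{\delta_n}$-actions (the last-passage case rather than the polymer case), Lemma~\ref{lem:discrete_EL} supplies \eqref{equ:discrete_EL-ge_1} and \eqref{equ:discrete_EL-ge_2} unchanged, and \eqref{equ:discrete_EL-le} without the logarithmic slack, so the argument above simplifies and the assumption $\log_{w_n}\delta_n\to0$ is not needed. The main obstacle, though essentially bookkeeping, is making sure the three separate $\delta_n$-perturbations — in the domain of the outer sup over $y'$, in the lower endpoint of the inner sup over $z$, and in the $z\mapsto z-\delta_n$ shift appearing inside $\overline{A}_{\delta_n}(y,k)-\overline{A}_{\delta_n}(z-\delta_n,k)$ — are all absorbed simultaneously using only the locally uniform convergence $\overline{A}_{\delta_n}\to A$ and the continuity of $A$.
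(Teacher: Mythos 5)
Your proposal is correct and is exactly the paper's approach: the paper's own proof is the single sentence that the argument is similar to Proposition~\ref{p:polymer_to_lp} with Lemma~\ref{l:EL} replaced by Lemma~\ref{lem:discrete_EL}, details omitted. You have supplied precisely those omitted details, including the one genuinely new point --- that the slack $\log_{w_n}(2+\delta_n^{-1}(y-x))$ vanishes because $\log_{w_n}\delta_n\to 0$ and $w_n\to\infty$ --- together with the observation that the pure $\delta_n$-discrete (last-passage) case drops the slack term entirely and needs only the $\delta_n$-inflated sup domains to be absorbed via local uniform convergence of $\overline{A}_{\delta_n}$ and continuity of $A$.
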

The proof of this proposition is similar to the one of Proposition~\ref{p:polymer_to_lp}, with Lemma~\ref{l:EL} replaced by Lemma~\ref{lem:discrete_EL}. We omit the details.

\section{Airy line ensemble actions}\label{sec:Airyaction}

In this section we establish properties that are specific to the Airy line ensemble. The goal is to prove Theorems~\ref{thm:action-rep-sym} and \ref{t:action-rep_discrete}, a general framework to establish convergence to the Airy sheet. 

In Subsection~\ref{sec:proof_of_main}, we prove Theorem~\ref{t:main}, Corollary~\ref{c:main} and Corollary~\ref{c:main2} using Propositions~\ref{prop:main_1} and \ref{prop:main_2}. These two propositions are proved in Subsection~\ref{sec:proof_of_4344}. Lastly, we introduce and prove Theorems~\ref{thm:action-rep-sym} and \ref{t:action-rep_discrete} in Subsection~\ref{sec:conv_to_AS}.

In the following, we collect some important properties of the Airy line ensemble and the Airy sheet for later use. Recall that $\mathcal{A}$ is the parabolic Airy line ensemble and that $\cS$ is the Airy sheet. It is proved in \cite{DOV} that there is a natural coupling of $\mathcal{A}$ and $\cS$. Under such a coupling, almost surely one has for all $x>0,y_1, y_2\in\mathbb{R}$,
\begin{equation}\label{equ:D}
\begin{split}
\lim_{k\to\infty} \mathcal{A}\left( (-\sqrt{k/(2x)}  ,k )\ar  (y_2,1)\right) -\mathcal{A}\left( (-\sqrt{k/(2x)}  ,k )\ar  (y_1,1)\right) =\mathcal{S}(x,y_2)-\mathcal{S}(x,y_1).
\end{split} 
\end{equation}
 
\begin{proposition}[Corollary 10.7 in \cite{DOV}]\label{prop:DOV}
There exists a non-negative random variable $C$ satisfying $\mathbb{P}(C>m)\leq c e^{-dm^{3/2}}$ for universal constants $c,d$ and all $m>0$ such that
\begin{align}\label{equ:Airy_sheet_parabolic}
\left|\cS(x,y)+(x-y)^2 \right|\leq C\left(\log(|x|+|y|+2)\right)^2\ \textup{for all}\ x,y\in\mathbb{R}.
\end{align}
\end{proposition}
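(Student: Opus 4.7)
The plan is to combine the known one-point distribution of the Airy sheet with a union bound over a lattice and a continuity estimate. Recall two structural facts about $\mathcal{S}$ which I would exploit. First, the \emph{shift invariance} $\mathcal{S}(x+c,y+c) \stackrel{d}{=} \mathcal{S}(x,y)$ inherited from the construction in \cite{DOV}. Second, for each fixed $x$, $y \mapsto \mathcal{S}(x,y)+(x-y)^2$ is a (stationary) parabolic Airy process whose one-point marginal is GUE Tracy--Widom; combined with shift invariance, this gives the pointwise tail
\[
\mathbb{P}\bigl(|\mathcal{S}(x,y)+(x-y)^2|>m\bigr) \leq c\,e^{-d m^{3/2}}
\]
with constants independent of $(x,y)$.

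The main body of the argument would be a lattice union bound together with a modulus of continuity estimate. Let $F(x,y) = \mathcal{S}(x,y)+(x-y)^2$ and $g(x,y)=(\log(|x|+|y|+2))^2$. I would first show that with overwhelming probability $|F(m,n)| \leq K\, g(m,n)$ simultaneously over all $(m,n) \in \mathbb{Z}^2$ for $K$ large. Partitioning $\mathbb{Z}^2$ into dyadic shells $S_k = \{(m,n) : 2^k \leq |m|+|n|+2 < 2^{k+1}\}$ containing $O(4^k)$ points, the one-point tail gives
\[
\mathbb{P}\bigl(\exists (m,n)\in S_k:\ |F(m,n)|>K g(m,n)\bigr) \leq c\, 4^k \exp\!\bigl(-d K^{3/2} (k\log 2)^3\bigr),
\]
and summing over $k$ yields a tail of the desired $e^{-dK^{3/2}}$ form for the lattice supremum (after absorbing the small-$k$ contribution).

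To pass from the lattice to all of $\mathbb{R}^2$, I would establish a local modulus of continuity of the form
\[
\sup_{\substack{(x,y),(x',y')\in [m,m+1]\times[n,n+1]}} |F(x,y)-F(x',y')| \leq C_{m,n},
\]
where the random constants $C_{m,n}$ have stretched-exponential tails uniform in $(m,n)$. This would follow from a two-point variant of the Tracy--Widom estimate together with a chaining / Kolmogorov-type argument applied to the Hölder-$1/2^-$ regularity of the Airy sheet; the absolute continuity of $\mathcal{S}(x,\cdot)$ with respect to Brownian motion on compacts (a consequence of the Brownian Gibbs property of $\mathcal{A}$) is the natural input. A second union bound across shells, repeating the dyadic computation with $g$ bounded below by $k^2$, absorbs the $C_{m,n}$'s into the global bound at the cost of a multiplicative constant.

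The delicate step, and the main obstacle, is getting the tail exponent $3/2$ exactly rather than a weaker $3/2 - \varepsilon$. The two squeezes in the dyadic calculation, one for the lattice values and one for the local modulus, must both be sharp: the modulus-of-continuity tail must itself decay like $\exp(-c m^{3/2})$ at scale $m = K(\log R)^2$, which is where the Brownian comparison from the Gibbs property enters in an essential way. If one is willing to accept $C$ with only stretched-exponential tails of a smaller exponent, the argument simplifies considerably, but the precise $m^{3/2}$ rate requires invoking the sharp tail estimates already developed for the parabolic Airy line ensemble in \cite{DOV}.
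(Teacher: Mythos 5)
The paper does not prove this proposition; it is cited verbatim as Corollary 10.7 of \cite{DOV}, where the statement is established for the full directed landscape $\mathcal{L}(x,s;y,t)$ using its exact $1{:}2{:}3$ scale invariance and a chaining argument over a self-similar spacetime net, the Airy sheet bound then being the $s=0,\,t=1$ specialization. Your sketch, which stays entirely inside a single Airy sheet and uses shift covariance plus a lattice union bound plus a modulus-of-continuity fill-in, is therefore a genuinely different route. The skeleton is sound: the upper Tracy--Widom tail $e^{-dm^{3/2}}$ (the lower tail is faster, $e^{-cm^3}$) together with $\mathcal{S}(x+c,y+c)\stackrel{d}{=}\mathcal{S}(x,y)$ gives a one-point bound uniform in $(x,y)$, and the dyadic-shell union bound over $\mathbb{Z}^2$ correctly produces an $e^{-dK^{3/2}}$ tail for the lattice supremum against $K(\log(|m|+|n|+2))^2$.

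The real gap is in the continuity step, and for a different reason than the one you flag. You write that you want $\sup_{B_{m,n}}|F(x,y)-F(x',y')|\le C_{m,n}$ with a tail uniform in $(m,n)$, where $F(x,y)=\mathcal{S}(x,y)+(x-y)^2$ and $B_{m,n}=[m,m+1]\times[n,n+1]$. But far from the diagonal the oscillation of $(x-y)^2$ over the box is of order $|m-n|$, which can be comparable to $2^k$ on shell $S_k$ and hence vastly exceeds $(\log(|m|+|n|+2))^2$; it is only the near-exact cancellation against the matching oscillation of $\mathcal{S}$ that keeps $F$'s oscillation of order one. ``H\"older-$1/2^-$ regularity of the Airy sheet'' together with absolute continuity of $\mathcal{S}(x,\cdot)$ to Brownian motion does not by itself give a bound uniform over the continuum of slices $x\in[m,m+1]$, nor does it exhibit this cancellation. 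The tool that does both, and which is absent from your sketch, is the quadrangle inequality for $\mathcal{S}$ (Lemma 9.1 in \cite{DOV}; cf.\ the (Z4) axiom in this paper): it makes $x\mapsto\mathcal{S}(x,y_2)-\mathcal{S}(x,y_1)$ monotone for $y_1\le y_2$, so that $\sup_{x\in[m,m+1]}\big|F(x,y_2)-F(x,y_1)\big|$ is controlled by the two endpoint slices $x\in\{m,m+1\}$ plus an explicit $O(y_2-y_1)$ correction from the parabola (whose $x$-variation over a unit interval is $O(1)$, not $O(|m-n|)$). The reflection symmetry $\mathcal{S}(x,y)\stackrel{d}{=}\mathcal{S}(-y,-x)$ then handles the $x$-increments. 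Each endpoint slice is a stationary Airy process whose unit-interval modulus has (at least) a Gaussian tail by the Brownian Gibbs property of $\mathcal{A}$. With that reduction, your dyadic bookkeeping closes, and the modulus contribution decays faster than $e^{-m^{3/2}}$, so the exact exponent $3/2$ is set entirely by the one-point Tracy--Widom tails; there is less of a ``double squeeze'' than you anticipate. In short: correct outline, genuinely different from the landscape-scaling proof cited in the paper, but the parabola cancellation and the uniformity over the slice parameter require the quadrangle inequality, which your sketch omits.
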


Recall that $\uc_+$ is the set of all upper semicontinuous functions $f:[0,\infty)\to \mathbb R\cup\{-\infty\}$. Set
\begin{equation}\label{e:Sf_1}
\cS(f,y)=\sup_{x\ge 0}[ f(x)+ \cS(x,y)].
\end{equation}
The next proposition is essentially Proposition 6.1 in \cite{sarkar2021brownian}. We rephrase the statement for our purpose and provide a proof for the reader's convenience.

\begin{proposition}[Proposition 6.1 in \cite{sarkar2021brownian}]\label{prop:SV}
Fix a realization such that \eqref{equ:Airy_sheet_parabolic} holds for some $C\in [0,\infty)$. Then the following statements hold. First, a function $f\in \uc_+$ with $f\not\equiv-\infty$ satisfies $\mathcal{S}(f,y)\in\mathbb{R}$ for all $y$ if and only if 
\begin{equation}\label{equ:finitary}
\lim_{x\to \infty} \frac{f(x)-x^2}{x}=-\infty.
\end{equation}
Second, $\mathcal{S}(f,y)\in\mathbb{R}$ for all $y$ implies $\cS(f,y)$ is a continuous function on $y\in\mathbb{R}$.
\begin{proof}
Fix a realization such that \eqref{equ:Airy_sheet_parabolic} holds for some $C\in [0,\infty)$. Fix an arbitrary $f\in\uc_+$ with $f\not\equiv-\infty$ that satisfies \eqref{equ:finitary}. Since $f\not\equiv -\infty$, we have $\cS(f,y)>-\infty$ for all $y\in\mathbb{R}$. Moreover,
\begin{align*}
\cS(f,y)\leq \sup_{x\geq 0}\left( f(x)-x^2+2xy-y^2+C\left(\log(|x|+|y|+2)\right)^2 \right).
\end{align*}
It is direct to see that for any $L_1>0$ there exists $L_2>0$ such that for all $|y|\leq L_1$,
\begin{align*}
\cS(f,y)=\sup_{x\in [0,L_2]} f(x)+\cS(x,y).
\end{align*}
Hence $\cS(f,y)<\infty$ and is continuous in $y$.

On the other hand, let $f\in \uc_+$ be a function such that \eqref{equ:finitary} fails. There exists $a\in\mathbb{R}$ and a sequence $x_n\to \infty$ such that $f(x_n)-x_n^2\geq a x_n $. Then for any $y\geq (-a+1)/2$, we have
\begin{align*}
\cS(f,y)\geq &\limsup_{n\to\infty}\left( f(x_n)-x_n^2+2x_ny-y^2-C\left(\log(|x_n|+|y|+2)\right)^2 \right)\\
\geq &\limsup_{n\to\infty}\left( |x_n| -y^2-C\left(\log(|x_n|+|y|+2)\right)^2 \right)=\infty.\qedhere
\end{align*}
\end{proof}
\end{proposition}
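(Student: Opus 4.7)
The plan is to combine the parabolic two-sided bounds on $\cS(x,y)$ from Proposition~\ref{prop:DOV} with the variational formula \eqref{e:Sf_1} and reduce the supremum either to a bounded window in $x$ (when the growth condition \eqref{equ:finitary} holds) or to a divergent subsequence in $x$ (when it fails).

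For the forward direction of the biconditional, assume \eqref{equ:finitary} and fix $y\in\mathbb{R}$. Using the upper bound $\cS(x,y) \leq -(x-y)^2 + C(\log(|x|+|y|+2))^2$, I would write
\begin{equation*}
f(x) + \cS(x,y) \leq \bigl(f(x) - x^2\bigr) + 2xy - y^2 + C\bigl(\log(|x|+|y|+2)\bigr)^2.
\end{equation*}
Taking $M = 2|y| + 2$, the growth hypothesis provides some $X_M$ with $f(x) - x^2 \leq -Mx$ for $x \geq X_M$, so the right-hand side is bounded by $-x - y^2 + C(\log(|x|+|y|+2))^2 \to -\infty$. The supremum over $[0, X_M]$ is finite since $f$ is upper semicontinuous (hence bounded above on compacts) and $\cS(\cdot, y)$ is continuous, so $\cS(f,y) < \infty$. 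Finiteness from below is automatic from $f \not\equiv -\infty$.

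For the converse, I would argue the contrapositive: if \eqref{equ:finitary} fails, there exist $a \in \mathbb{R}$ and a sequence $x_n \uparrow \infty$ with $f(x_n) - x_n^2 \geq a x_n$. Combined with the matching lower bound $\cS(x_n, y) \geq -(x_n - y)^2 - C(\log(|x_n| + |y| + 2))^2$, this yields
\begin{equation*}
f(x_n) + \cS(x_n, y) \geq x_n(a + 2y) - y^2 - C\bigl(\log(|x_n|+|y|+2)\bigr)^2.
\end{equation*}
Choosing any $y > -a/2$ makes the linear-in-$x_n$ term positive and dominant, so $\cS(f,y) = +\infty$, contradicting the hypothesis $\cS(f,y) \in \mathbb{R}$.

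For continuity, the truncation in the forward direction is uniform on any compact interval $|y| \leq L_1$: taking $M = 2L_1 + 2$ yields a common $L_2 = X_M$ so that $\cS(f,y) = \sup_{x \in [0, L_2]}(f(x) + \cS(x,y))$ for all $|y| \leq L_1$. Since $\cS$ is jointly continuous, hence uniformly continuous on $[0, L_2] \times [-L_1, L_1]$, the family $\{y \mapsto f(x) + \cS(x,y)\}_{x \in [0, L_2]}$ is equicontinuous in $y$, and $\cS(f,y)$ is then continuous on $[-L_1, L_1]$ as the pointwise supremum of an equicontinuous family of continuous functions. Since $L_1$ was arbitrary, continuity holds on all of $\mathbb{R}$. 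I do not anticipate any real obstacle; the proof is essentially a careful bookkeeping of the quantitative parabolic bound from Proposition~\ref{prop:DOV} against the two tails of the growth condition.
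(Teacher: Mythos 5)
Your proposal is correct and follows the same route as the paper: bound $f(x)+\cS(x,y)$ from above via the parabolic estimate from Proposition~\ref{prop:DOV} and the growth hypothesis to localize the supremum to a compact $x$-window, deduce finiteness and continuity from that truncation, and for the converse use the matching parabolic lower bound along a sequence $x_n\to\infty$ where the growth condition fails, choosing $y$ large enough that the linear term in $x_n$ dominates. Your elaboration of the paper's ``it is direct to see'' step (making the window $L_2$ explicit in terms of a cutoff $X_M$ uniform over $|y|\le L_1$, and invoking equicontinuity to get continuity of the supremum) is exactly the bookkeeping the paper leaves implicit, so the arguments are substantively identical.
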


\subsection{Proof of Theorem~\ref{t:main}}\label{sec:proof_of_main}

In this subsection, we prove Theorem~\ref{t:main}, Corollary~\ref{c:main} and Corollary~\ref{c:main2}. In order to prove Theorem~\ref{t:main}, we need Propositions~\ref{prop:main_1} and \ref{prop:main_2} below. The proofs for Propositions~\ref{prop:main_1} and \ref{prop:main_2} are postponed to the next subsection.

\begin{proposition}\label{prop:main_1}
Almost surely, the following statement holds. For any $\mathcal{A}$-action $T$ which is upper semicontinuous and $T(y,1)\in\mathbb{R}$ for all $y\in\mathbb{R}$, there exists $f\in\uc_+$ such that $T(1,y)=\cS(f,y)$ for all $y\in\mathbb{R}$.
\end{proposition}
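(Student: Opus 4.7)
The plan is to combine the geodesic decomposition of Proposition~\ref{prop:decomposition} with an identification of each infinite geodesic's associated action with a Busemann shift of the Airy sheet, then reparametrize by geodesic direction and take an upper semicontinuous envelope.

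First, since $T(y,1)\in\mathbb{R}$ for all $y$, we have $\Omega_T=\mathbb{R}$, and Proposition~\ref{prop:decomposition} supplies, for every $x\in\mathbb{R}$, an infinite $(T,(x,1))$-geodesic $\gamma(x)$ with
\[
T(y,1)=\sup_{x\in\mathbb{R}}\bigl[T(x,1)+T_{\gamma(x)}(y,1)\bigr].
\]

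The substantive step is to identify $T_{\gamma(x)}(\cdot,1)$ with a Busemann shift of the Airy sheet. The deterministic input, holding almost surely for $\mathcal{A}$, is that every infinite geodesic $\gamma$ admits an asymptotic direction $\xi(\gamma)\in[0,\infty)$: the level-$k$ crossing $z_k$ of $\gamma$ satisfies $-z_k/\sqrt{k}\to 1/\sqrt{2\xi(\gamma)}$, with the convention $\xi=0$ when $-z_k/\sqrt{k}\to\infty$. Granting this, I would prove that if $\gamma$ has endpoint $(y_\gamma,1)$ and direction $\xi$, then
\[
T_\gamma(y,1)=\mathcal{S}(\xi,y)-\mathcal{S}(\xi,y_\gamma) \qquad \text{for all } y\in\mathbb{R}.
\]
One takes the limit in Definition~\ref{def:Action_from_geodesic} along $q_j=(z_{k_j},k_j)\in\gamma$ and compares with the reference sequence $(-\sqrt{k_j/(2\xi)},k_j)$ appearing in~\eqref{equ:D}; the comparison uses the quadrangle inequality~\eqref{equ:quadrangle_LP} and the monotonicity from Lemma~\ref{lem:Action_from_geodesic}, with errors controlled by the parabolic envelope~\eqref{equ:Airy_sheet_parabolic}.

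Substituting this identification into the decomposition, writing $\xi(x):=\xi(\gamma(x))$ and $g(x):=T(x,1)-\mathcal{S}(\xi(x),x)$, gives
\[
T(y,1)=\sup_{x\in\mathbb{R}}\bigl[g(x)+\mathcal{S}(\xi(x),y)\bigr]=\sup_{\xi\ge 0}\bigl[f_0(\xi)+\mathcal{S}(\xi,y)\bigr],
\]
where $f_0(\xi):=\sup\{g(x):x\in\mathbb{R},\ \xi(x)=\xi\}$ with $\sup\emptyset=-\infty$. Finally, let $f$ be the upper semicontinuous envelope of $f_0$. Since $\xi\mapsto\mathcal{S}(\xi,y)$ is continuous for each $y$, one has $\mathcal{S}(f,y)=\mathcal{S}(f_0,y)=T(y,1)\in\mathbb{R}$; Proposition~\ref{prop:SV} then forces $f(\xi)<\infty$ for every $\xi$ and delivers the growth condition, so $f\in\uc_+$ and the representation $T(y,1)=\mathcal{S}(f,y)$ holds.

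The main obstacle is the identification step: existence of asymptotic directions for arbitrary infinite geodesics in $\mathcal{A}$ is a nontrivial consequence of the planar and parabolic structure of the Airy line ensemble, and upgrading~\eqref{equ:D}, which is stated only along a specific reference sequence, to the identification of $T_\gamma$ with a Busemann shift along non-reference geodesic sequences requires a quantitative interchange of limits controlled by the quadrangle inequality and the parabolic bound~\eqref{equ:Airy_sheet_parabolic}.
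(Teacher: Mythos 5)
Your proposal follows essentially the same route as the paper: variational decomposition via Proposition~\ref{prop:decomposition}, identification of $T_{\gamma(x)}(\cdot,1)$ with a shifted Airy sheet slice in the direction of $\gamma(x)$, reparametrization of the supremum by direction, and passage to the upper semicontinuous envelope. The one thing to flag is that the identification you label as the ``main obstacle'' is exactly the content of Corollary~\ref{c:direction} (built on Proposition~\ref{prop:geodesic_monotone}), which is already available to you: the direction is defined as the common value of $\liminf$ and $\limsup$ of $k/(2y^2)$ along the geodesic, the monotonicity of $T_\gamma(\cdot,1)-\cS(x',\cdot)$ for $x'$ on either side of the direction (from the quadrangle inequality) forces these to agree, and the squeezing argument then gives $T_\gamma(y,1)=\cS(\xi,y)-\cS(\xi,z_\gamma)$. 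Finiteness of the direction, which you implicitly assume by writing $\xi(\gamma)\in[0,\infty)$, is not automatic for arbitrary infinite geodesics; it follows here because Lemma~\ref{l:decomp} gives $T_{\gamma(x)}(y,1)\le T(y,1)-T(x,1)<\infty$, and the last clause of Corollary~\ref{c:direction} then excludes direction $+\infty$. With those references filled in, your argument is the paper's proof.
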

\begin{proposition}\label{prop:main_2}
Almost surely, the following  statement holds. For any $f\in\uc_+$ such that $\cS(f,y)\in\mathbb{R}$ for all $y\in\mathbb{R}$, there exists an $\mathcal{A}$-action $T$ such that $T(y,1)=\cS(f,y)$ for all $y\in\mathbb{R}$.  
\end{proposition}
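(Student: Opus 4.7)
The plan is to construct $T$ as a supremum of $\mathcal{A}$-actions indexed by direction, exploiting the variational formula defining $\cS(f,\cdot)$ together with Lemma~\ref{lem:actionsup}.

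First, for each $x\ge 0$, I would construct an $\mathcal{A}$-action $T_x$ on $\mathbb{R}\times\mathbb{N}$ satisfying $T_x(y,1)=\cS(x,y)$ for all $y\in\mathbb{R}$. The natural candidates are ``Busemann actions'' built from the DOV coupling \eqref{equ:D}. Concretely, for $x>0$ set $q_{k,x}=(-\sqrt{k/(2x)},k)$ and
\begin{equation*}
T^{(k)}_x(p)=\mathcal{A}(q_{k,x}\ar p)-\mathcal{A}(q_{k,x}\ar(0,1))+\cS(x,0).
\end{equation*}
By Lemma~\ref{lem:LPP_as_almostAction}, $T^{(k)}_x$ satisfies the action recursion at every $(y,l)$ with $l<k$, and \eqref{equ:D} gives the pointwise convergence $T^{(k)}_x(y,1)\to\cS(x,y)$. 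Parabolic tail bounds on $\mathcal{A}$ together with Proposition~\ref{prop:DOV} should produce a uniform parabolic upper envelope for the $T^{(k)}_x$, yielding relative compactness in the hypograph topology on $\uc(\mathbb{R}\times\mathbb{N})$. Passing to a subsequential $\uc$-limit $T_x$ and running the argument of Proposition~\ref{p:lp-to-lp}---noting that at any fixed $(y,l)$ all sufficiently large $k$ satisfy the recursion at $(y,l)$---shows that $T_x$ is a bona-fide $\mathcal{A}$-action. Pointwise convergence on the top line together with continuity of $\cS(x,\cdot)$ in $y$ (Proposition~\ref{prop:SV}) then upgrades this to $T_x(y,1)=\cS(x,y)$ for every $y$. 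The boundary case $x=0$ is handled analogously with a suitable choice of $q_{k,0}$, or by taking a limit as $x\downarrow 0$.

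With the family $\{T_x\}_{x\ge 0}$ in hand, set
\begin{equation*}
T=\sup_{x\ge 0}\bigl[f(x)+T_x\bigr],
\end{equation*}
with the convention that the summand is $-\infty$ wherever $f(x)=-\infty$. Since adding a real constant preserves the action recursion, each $f(x)+T_x$ with $f(x)\in\mathbb{R}$ is an $\mathcal{A}$-action, and Lemma~\ref{lem:actionsup} implies $T$ is an $\mathcal{A}$-action as well. Evaluating at $k=1$,
\begin{equation*}
T(y,1)=\sup_{x\ge 0}\bigl[f(x)+\cS(x,y)\bigr]=\cS(f,y),
\end{equation*}
which is finite for every $y$ by hypothesis.

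The main obstacle is Step~1: extracting the Busemann action $T_x$ from the ``almost-actions'' $T^{(k)}_x$. One needs uniform parabolic upper bounds on $T^{(k)}_x$ to secure $\uc$-precompactness, and a small adaptation of Proposition~\ref{p:lp-to-lp} to accommodate the fact that the pre-limits obey the recursion only on a region that expands with $k$. Once $T_x$ is available, Step~2 follows immediately from Lemma~\ref{lem:actionsup}.
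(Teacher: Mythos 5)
Your proposal is correct in its overall structure, and Step~2 (taking the supremum over directions and invoking Lemma~\ref{lem:actionsup}) is essentially identical to what the paper does. The genuinely different part is Step~1, the construction of a Busemann action $T_x$ with $T_x(\cdot,1)=\cS(x,\cdot)$, and the difference is worth noting.

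The paper does not build $T_x$ from scratch. It imports the existence of an infinite geodesic $\sigma(x)$ ending at $(0,1)$ in direction $x$ (Lemma 3.4 of \citet{sarkar2021brownian}, extended from rational to all $x\geq 0$ by approximation), then appeals to Corollary~\ref{c:direction} for $T_{\sigma(x)}(y,1)=\cS(x,y)-\cS(x,0)$, and to Lemma~\ref{lem:Bgamma_action} for the fact that $T_{\sigma(x)}$ is an $\mathcal{A}$-action. The reason this is so short is that the base points $q_j$ in Definition~\ref{def:Action_from_geodesic} lie on the geodesic $\sigma(x)$, which makes $T_{q_j,\sigma(x)}$ pointwise monotone in $j$ (Lemma~\ref{lem:Action_from_geodesic}); the limit $T_\gamma$ then exists pointwise with no compactness argument, and the action recursion passes through the monotone limit immediately. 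You instead anchor your pre-actions $T^{(k)}_x$ at the DOV coupling points $q_{k,x}=(-\sqrt{k/(2x)},k)$, which need not lie on any single geodesic, so you lose monotonicity in $k$. This forces you to take a $\uc$-subsequential limit (which is free, since $\uc(\mathbb{R}\times\mathbb{N})$ is compact by Lemma~\ref{lem:UC_compact} — your appeal to parabolic envelopes for precompactness is unnecessary), to transfer the action recursion through an adaptation of Proposition~\ref{p:lp-to-lp} that accounts for the recursion holding only on a growing region, and to recover the top-line identity from pointwise convergence plus continuity of $\cS(x,\cdot)$ and monotonicity of $T^{(k)}_x(\cdot,1)-\mathcal{A}(\cdot,1)$ (in the spirit of Lemma~\ref{lem:UC_no_jump}). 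All of this can be made to work, so the proposal is sound; what you buy is that you bypass the Sarkar-Virag geodesic input and rely only on the DOV coupling, at the cost of a longer limiting argument than the paper's monotone-limit shortcut.
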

\begin{proof}[Proof of Theorem~\ref{t:main}]
Fix a realization such that the statements in Propositions~\ref{prop:SV}, \ref{prop:main_1}, and \ref{prop:main_2} hold. From Proposition~\ref{prop:main_2}, we have
\begin{align*}
&\{\cS(f,\cdot):f\in \uc_+,\cS(f,x)\in \mathbb{R}\ \forall x\in\mathbb{R} \} 
\subset  \{T(1,\cdot):T\mbox{ is an}\ \mathcal{A}\mbox{-action, }T(1,y)\in\mathbb{R}\ \textup{for all}\ y \}.
\end{align*}

We aim to prove the other direction of inclusion. Let $T$ be an $\mathcal{A}$-action with $T(y,1)\in\mathbb{R}$ for all $y\in\mathbb{R}$. Let $T'$ be the upper semicontinuous version of $T$. From Lemma~\ref{lem:UCaction}, $T'$ is an $\mathcal{A}$-action. Since $T(y,1)-\mathcal{A}(y,1)$ is monotone non-decreasing in $y$, and $T(y,1)\in\mathbb{R}$ for all $y$, we have $T'(y,1)\in\mathbb{R}$ for all $y$. From Proposition~\ref{prop:main_1}, there exists $f\in \uc_+$ such that $\cS(f,y)=T'(y,1)$ for all $y$. From Proposition~\ref{prop:SV}, $\cS(f,y)$ is continuous. Hence $T'(y,1)$ is also continuous. In view of the monotonicity of $T(y,1)-\mathcal{A}(y,1)$, we conclude that $T(y,1)=T'(y,1)=\cS(f,y)$ for all $y$. In short, for any $\mathcal{A}$-action $T$ with $T(y,1)\in\mathbb{R}$ for all $y\in\mathbb{R}$, there exists $f\in \uc_+$ such that $\cS(f,y)=T(y,1)$ for all $y\in\mathbb{R}$. This shows the other direction of inclusion and completes the proof. 
\end{proof}

We are now ready to prove the two main corollaries from the introduction. 

\begin{proof}[Proof of Corollary \ref{c:main}]
From Proposition~\ref{prop:DOV}, with probability one, we have
\begin{equation}\label{equ:c_main_1}
\lim_{y\to\pm\infty} \frac{\cS(z,y)-y^2}{y}=2z\ \textup{for all}\ z\in\mathbb{R}.
\end{equation}
Let $T$ be an $\mathcal{A}$-action and $x\in\mathbb{R}$ be a real number as described in Corollary~\ref{c:main}. In particular, with probability one, we have $T(y,1)\in\mathbb{R}$ for all $y\in\mathbb{R}$ and 
\begin{equation}\label{equ:c_main_2}
\liminf_{y\to\infty} \frac{T(y,1)+y^2}{y}\le 2x, \qquad \mbox{and} \qquad \liminf_{y\to-\infty} \frac{T(y,1)+y^2}{-y}\le -2x.
\end{equation}
From Theorem~\ref{t:main}, with probability one, we have 
\begin{equation}\label{equ:c_main_3}
T(1,y)=\cS(f,y)\ \textup{for all}\ y\in\mathbb{R} 
\end{equation}
for some $f\in \uc_+$. 

We aim to show that when \eqref{equ:c_main_1}, \eqref{equ:c_main_2}, and \eqref{equ:c_main_3} all occur, $T(1,y)-\cS(x,y)$ does not depend on $y$. Then setting $C=T(1,0)-\cS(x,0)$ yields the desired result. 

From now on, we fix a realization such that \eqref{equ:c_main_1}, \eqref{equ:c_main_2}, and \eqref{equ:c_main_3} all occur. Let $x_0\geq 0$ be a point at which $f(x_0)>-\infty$. Note that from the definition of $\uc_+$, $f(x_0)\in\mathbb{R}$. Combining \eqref{equ:c_main_1}, \eqref{equ:c_main_2} and \eqref{equ:c_main_3}, we have
\begin{align*}
2x\geq &\liminf_{y\to\infty} \frac{T(y,1)+y^2}{y} 
=\liminf_{y\to\infty} \frac{\cS(f,y)+y^2}{y}\\
\geq & \liminf_{y\to\infty} \frac{\cS(x_0,y)+f(x_0)+y^2}{y} 
=2x_0.
\end{align*}
A similar argument gives $2x\leq 2x_0$. Hence
\begin{align*}
f(z)=\left\{
\begin{array}{cc}
f(x), & z=x,\\
-\infty, & z\neq x. 
\end{array}
\right.
\end{align*}
This implies $T(1,y)=\cS(f,y)=\cS(x,y)+f(x)$. The proof is complete.
\end{proof}

\begin{lemma}\label{l:ergodic}
Let $(Y_n, \,n\in \mathbb Z)$ be a stationary ergodic process, and let $Y'_n=Y_n+X$ for some random variable $X$. Assume $Y_n\ed Y'_n$\, for all $n$. Then $X=0$ almost surely. 
\end{lemma}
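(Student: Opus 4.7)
The plan is to apply Birkhoff's ergodic theorem to both $(Y_n)$ and $(Y'_n)=(Y_n+X)$ using the bounded test functions $y\mapsto e^{ity}$, and to exploit that the shift by $X$ pulls out of the time average as a multiplicative factor. I read the hypothesis $Y_n\ed Y'_n$ for all $n$ as equality of the full process laws, since marginal equality for all $n$ already follows automatically from stationarity of $(Y_n)$ once it holds for a single $n$. Since stationarity and ergodicity are properties of the law, $(Y'_n)$ is then also stationary ergodic with the same one-dimensional characteristic function $\phi_Y(t):=\ev[e^{itY_0}]$.

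Fix $t\in\R$. Birkhoff applied to $(Y_n)$ gives $\tfrac1N\sum_{n=1}^N e^{itY_n}\to\phi_Y(t)$ almost surely. Applied to $(Y'_n)$, and pulling $e^{itX}$ out of the sum, it gives
$$
e^{itX}\cdot\tfrac1N\sum_{n=1}^N e^{itY_n}=\tfrac1N\sum_{n=1}^N e^{itY'_n}\longrightarrow\phi_Y(t)\quad\textup{a.s.}
$$
On the intersection of these two almost sure convergences, $e^{itX}\phi_Y(t)=\phi_Y(t)$, so $e^{itX}=1$ almost surely whenever $\phi_Y(t)\neq 0$.

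Since $\phi_Y$ is continuous with $\phi_Y(0)=1$, there exists $\delta>0$ such that $\phi_Y(t)\neq 0$ for $|t|<\delta$. Picking a sequence $t_k\in(0,\delta)$ with $t_k\to 0$ and intersecting the a.s.\ events over $k$, I obtain that almost surely $t_kX\in 2\pi\Z$ for every $k$. Since $t_k|X|<2\pi$ once $k$ is large, this forces $X=0$ a.s.

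The only point that needs care is the transfer of the Birkhoff limit from $(Y_n)$ to $(Y'_n)$ on the joint probability space on which both are defined; this is immediate once one observes that the Birkhoff convergence event is measurable in the process alone and hence carries probability one under the common law of $(Y_n)$ and $(Y'_n)$.
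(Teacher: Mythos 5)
Your proof and the paper's both run Birkhoff's ergodic theorem with the test functions $y\mapsto e^{i\lambda y}$ and conclude from nonvanishing of $\phi_Y$ near $0$. The essential divergence is that you apply Birkhoff to \emph{both} processes, whereas the paper applies it only to $(Y_n)$ and then takes expectations. That divergence is where your argument has a gap.

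To apply Birkhoff to $(Y'_n)$ and conclude $\tfrac1N\sum e^{itY'_n}\to\phi_Y(t)$ a.s., you need $(Y'_n)$ to be stationary and ergodic with one-dimensional marginal law equal to that of $Y_1$. You obtain this by reinterpreting the hypothesis $Y_n\ed Y'_n$ as equality of \emph{process} laws, justifying the reinterpretation on the grounds that marginal equality ``for all $n$'' would be redundant given stationarity of $(Y_n)$. That justification is not correct: $Y'_n=Y_n+X$ with $X$ a random variable not assumed independent of the process, so $(Y'_n)$ is \emph{not} automatically stationary, and ``$Y'_n\ed Y_n$ for each $n$'' is a genuine family of constraints on the marginals of $(Y'_n)$, not one constraint repeated. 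More to the point, in the paper's only application of this lemma (in the proof of Corollary~\ref{c:main2}), one sets $Y_n=\cS(x,n)+(n-x)^2$ and $Y'_n=T(n,1)+(n-x)^2$; the hypotheses there only supply equality of one-dimensional marginals (both are GUE Tracy--Widom), with no control at all on the joint process law of $(Y'_n)$. So the lemma must be proved under the marginal reading, and your version would not be applicable.

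The repair is exactly the paper's route. Keep your algebraic identity
$$\frac1N\sum_{n=1}^N e^{itY'_n}=e^{itX}\cdot\frac1N\sum_{n=1}^N e^{itY_n}\longrightarrow e^{itX}\,\phi_Y(t)\qquad\textup{a.s. and in }L^1,$$
which uses Birkhoff only for $(Y_n)$ (the $L^1$ part is free since the summands are bounded by $1$). Now take expectations on both sides: the left-hand side limit has expectation $\lim_N\tfrac1N\sum_{n=1}^N\mathbb{E}[e^{itY'_n}]=\phi_Y(t)$, using precisely the marginal equality $Y'_n\ed Y_n\ed Y_1$ for each $n$; the right-hand side has expectation $\mathbb{E}[e^{itX}]\,\phi_Y(t)$. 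Thus $\phi_Y(t)=\mathbb{E}[e^{itX}]\,\phi_Y(t)$, and for $t$ in a neighborhood of $0$ where $\phi_Y\neq 0$ one gets $\mathbb{E}[e^{itX}]=1$. Since $|e^{itX}|=1$, this already forces $e^{itX}=1$ a.s., after which your clean finishing step (take $t_k\downarrow 0$ and use $t_k|X|<2\pi$ for large $k$) works verbatim and is slightly more self-contained than the paper's citation of Durrett.
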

\begin{proof}
Let $\lambda \in \mathbb R$. By the ergodic theorem, 
$$
\lim_{n\to\infty} \frac{1}{n}\sum_{j=1}^n e^{i\lambda Y'_j}=e^{i\lambda X} \lim_{n\to\infty} \frac{1}{n}\sum_{j=1}^n e^{i\lambda Y_j}=e^{i\lambda X}\mathbb{E}\left[ e^{i\lambda Y_1}\right].
$$
almost surely and in $L^1$. Taking expectations, and using the bounded convergence theorem, we deduce 
$$
\mathbb{E}\left[ e^{i\lambda Y_1}\right]=\mathbb{E}\left[ e^{i\lambda X}\right]\mathbb{E}\left[ e^{i\lambda Y_1}\right].
$$
Since $\varphi(\lambda)=\mathbb{E}\left[ e^{i\lambda Y_1}\right]$ is continuous in $\lambda$ and $\varphi(0)=1$. There is an open neighborhood $U$ of $0$ such that $\varphi(\lambda)\neq 0$ for $\lambda\in U$. Then we have $\mathbb{E}\left[ e^{i\lambda X}\right]=1 $ for all $\lambda\in U$. This implies $X=0$, see Exercise 3.3.19 in \cite{durrett}.
\end{proof}

\begin{proof}[Proof of Corollary \ref{c:main2}]
Let $T$ be an $\mathcal{A}$-action and $x\in\mathbb{R}$ be a real number as described in Corollary~\ref{c:main2}. We start by checking that almost surely $T(y,1)\in\mathbb{R}$ for all $y\in\mathbb{R}$. Since $T(n,1)+(n-x)^2\ed \textup{TW}$ for $n\in\mathbb{Z}$, with probability one, we have $T(n,1)\in\mathbb{R}$ for all $n\in\mathbb{Z}$. From the action recursion \eqref{equ:Local_metric_composition_law}, we deduce 
\begin{align*}
T(\lfloor y \rfloor,1)+\mathcal{A}(y,1)-\mathcal{A}(\lfloor y \rfloor,1) \leq  T(y,1)\leq T(\lceil y \rceil,1)-\mathcal{A}(\lceil y \rceil,1)+\mathcal{A}(y,1)
\end{align*}
for all $y\in\mathbb{R}$. Hence with probability one, $T(y,1)\in\mathbb{R}$ for all $y\in\mathbb{R}$. 

We claim that almost surely
\begin{equation}\label{equ:c_main_4}
\liminf_{y\to\infty} \frac{T(y,1)+y^2}{y}\le 2x, \qquad \mbox{and} \qquad \liminf_{y\to-\infty} \frac{T(y,1)+y^2}{-y}\le -2x.
\end{equation}
Assume for a moment \eqref{equ:c_main_4} holds true. From Corollary~\ref{c:main}, there exists a scalar random variable $C$ such that $T(y,1)=\cS(x,y)+C$ almost surely. Since $\cS(x,y)+(y-x)^2$ is the stationary Airy process, it is ergodic, see \cite{prahofer2002scale}. Set $Y_n=\cS(x,n)+(n-x)^2$ and $Y'_n=T(n,1)+(n-x)^2$ for $n\in\mathbb{Z}$. From Lemma \ref{l:ergodic}, we deduce $C=0$ almost surely. Hence $T(y,1)=\cS(x,y)$ almost surely, which is the desired assertion.

It remains to prove \eqref{equ:c_main_4}. We compute
\begin{align*}
\liminf_{y\to\infty} \frac{T(y,1)+y^2}{y}\leq \liminf_{n\to\infty} \frac{T(n,1)+n^2}{n}=2x+\liminf_{n\to\infty} \frac{Y'_n}{n}\leq 2x+\liminf_{n\to\infty} \frac{(Y'_n)_+}{n}.
\end{align*}
Here $(Y'_n)_+$ is the positive part of $Y'_n$. Note that each $Y'_n\ed\textup{TW}$. In particular, $\mathbb{E}[(Y'_n)_+]\in\mathbb{R}$ and does not depend on $n$. Using Fatou's lemma, we get
\begin{align*}
\mathbb{E}\left[\liminf_{n\to\infty}\frac{(Y'_n)_+}{n}\right]\leq \liminf_{n\to\infty}\mathbb{E}\left[\frac{(Y'_n)_+}{n}\right]=0.
\end{align*}
Hence almost surely $\liminf_{n\to\infty}\frac{(Y'_n)_+}{n}=0$ and $\liminf_{y\to\infty} \frac{T(y,1)+y^2}{y}\leq 2x$. A similar argument yields $\liminf_{y\to-\infty} \frac{T(y,1)+y^2}{-y}\leq -2x$ almost surely. These yield \eqref{equ:c_main_4} and complete the proof.
\end{proof}

\subsection{Proofs of Propositions~\ref{prop:main_1} and \ref{prop:main_2}}\label{sec:proof_of_4344}

\begin{proposition}\label{prop:geodesic_monotone}
Fix a realization such that \eqref{equ:D} holds. Let $\gamma$ be an infinite geodesic with respect to $\mathcal{A}$ and $x>0$. Let $q_j=(y_j,k_j)$ be a sequence as described in Definition~\ref{def:Action_from_geodesic}.
\begin{itemize}
\item If $y_j\geq -\sqrt{k_j/(2x)}$ for $j$ large enough. Then for all $z_1\leq z_2$,
\begin{equation}\label{equ:nondecreasing}
T_\gamma(z_2,1)-\cS(x,z_2)\geq T_\gamma(z_1,1)-\cS(x,z_1). 
\end{equation}
\item If $y_j\leq -\sqrt{k_j/(2x)}$ for $j$ large enough. Then for all $z_1\leq z_2$,
\begin{equation}\label{equ:nonincreasing}
T_\gamma(z_2,1)-\cS(x,z_2)\leq T_\gamma(z_1,1)-\cS(x,z_1). 
\end{equation}
\end{itemize}
\end{proposition}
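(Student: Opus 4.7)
The plan is to apply the quadrangle inequality of Lemma~\ref{lem:quadrangle_LP} to the four points $(x_{-j}, k_j), (y_j, k_j), (z_1, 1), (z_2, 1)$, where $x_{-j} := -\sqrt{k_j/(2x)}$, and then send $j \to \infty$. By the definition of $T_\gamma$ in \eqref{equ:T_gamma_1}, provided $q_j \preceq (z_i, 1)$ eventually, the term involving $(y_j, k_j)$ will converge to $T_\gamma(z_2, 1) - T_\gamma(z_1, 1)$, while the term involving $(x_{-j}, k_j)$ converges to $\mathcal{S}(x, z_2) - \mathcal{S}(x, z_1)$ by the key identity \eqref{equ:D}. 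The monotonicity in Lemma~\ref{lem:Action_from_geodesic} guarantees these limits exist in $[-\infty, +\infty]$.

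In Case 1, we have $x_{-j} \le y_j$ by assumption. Since $y_j$ is nonincreasing in $j$, either $y_j \le z_1$ for all large $j$, or $y_j > z_1$ for all $j$. In the latter case, $T_{q_j, \gamma}(z_1, 1) = -\infty$ for all $j$, so $T_\gamma(z_1, 1) = -\infty$ and \eqref{equ:nondecreasing} is trivial. In the former case, Lemma~\ref{lem:quadrangle_LP} with $(x, x', y, y', m, k) = (x_{-j}, y_j, z_1, z_2, k_j, 1)$ gives
\begin{align*}
\mathcal{A}((y_j, k_j) \ar (z_2, 1)) - \mathcal{A}((y_j, k_j) \ar (z_1, 1)) \ge \mathcal{A}((x_{-j}, k_j) \ar (z_2, 1)) - \mathcal{A}((x_{-j}, k_j) \ar (z_1, 1)).
\end{align*}
Taking $j \to \infty$ yields $T_\gamma(z_2, 1) - T_\gamma(z_1, 1) \ge \mathcal{S}(x, z_2) - \mathcal{S}(x, z_1)$, which is \eqref{equ:nondecreasing}.

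In Case 2, we have $y_j \le x_{-j}$, and since $x_{-j} \to -\infty$ we automatically have $y_j \le x_{-j} \le z_1$ for large $j$. Applying Lemma~\ref{lem:quadrangle_LP} with the roles of $x$ and $x'$ swapped, that is, $(x, x', y, y', m, k) = (y_j, x_{-j}, z_1, z_2, k_j, 1)$, produces the reverse inequality
\begin{align*}
\mathcal{A}((y_j, k_j) \ar (z_2, 1)) - \mathcal{A}((y_j, k_j) \ar (z_1, 1)) \le \mathcal{A}((x_{-j}, k_j) \ar (z_2, 1)) - \mathcal{A}((x_{-j}, k_j) \ar (z_1, 1)),
\end{align*}
and the same limiting procedure yields \eqref{equ:nonincreasing}.

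The main subtlety I anticipate is ensuring the limit defining $T_\gamma(z_2, 1) - T_\gamma(z_1, 1)$ is a legitimate limit of differences rather than a difference of limits that may conflict; this is precisely the role of Lemma~\ref{lem:Action_from_geodesic}, which says $T_{q_j, \gamma}$ is monotone in $q_j$, so the sequence $\mathcal{A}(q_j \ar (z_i, 1)) - \mathcal{A}(q_j \ar q_\gamma)$ is monotone in $j$ for each $i$, and hence the limit of the difference equals the difference of the limits (interpreting $(-\infty) - c = -\infty$). The remaining $-\infty$ edge cases are dispatched as in Case 1 above.
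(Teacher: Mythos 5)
Your argument for Case 1 is correct and essentially the same as the paper's. In Case 1, the hypothesis $y_j \geq -\sqrt{k_j/(2x)}$ combined with $y_j\to -\infty$ actually forces $k_j\to\infty$, so $x_{-j}\to -\infty$ holds automatically, the quadrangle inequality applies at level $k_j$, and \eqref{equ:D} gives the Airy sheet in the limit. Your handling of the $-\infty$ degeneracy is also fine.

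There is a genuine gap in Case 2. You assert ``since $x_{-j}\to -\infty$,'' but this is false in general in Case 2. We have $x_{-j}=-\sqrt{k_j/(2x)}$ with $k_j$ nondecreasing, and nothing forces $k_j\to\infty$: the hypothesis $y_j\le -\sqrt{k_j/(2x)}$ together with $y_j\to -\infty$ is perfectly compatible with $k_j=k_*$ eventually constant, e.g.\ an infinite geodesic that is a horizontal ray at some fixed level. When $k_j$ is bounded, $x_{-j}$ is bounded, so (a) you do not automatically have $y_j\le x_{-j}\le z_1$ for large $j$, and, more seriously, (b) the quantity $\mathcal{A}((x_{-j},k_j)\ar (z_i,1))$ does not converge to $\mathcal{S}(x,z_i)$ in the limit --- the identity \eqref{equ:D} requires the level to tend to infinity, and here it stays fixed. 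Your limiting step then produces a constant on the right side of the inequality, not the Airy sheet increment, so the argument stops.

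The paper handles this missing case by decoupling the two limits. For fixed $k\ge k_*$, it considers a geodesic from $(-\sqrt{k/(2x)},k)$ to $(z_1,1)$, picks its intersection point $q'=(z',k_*)$ at level $k_*$, uses the triangle inequality \eqref{equ:triangle} to bound $\mathcal{A}((-\sqrt{k/(2x)},k)\ar(z_2,1))-\mathcal{A}((-\sqrt{k/(2x)},k)\ar(z_1,1))$ from below by $\mathcal{A}(q'\ar(z_2,1))-\mathcal{A}(q'\ar(z_1,1))$, and then applies the quadrangle inequality at level $k_*$ to compare $q'$ with $q_j$. Sending $j\to\infty$ first gives \eqref{equ:nonincreasing_2} for each fixed $k$, and only then does one send $k\to\infty$ to invoke \eqref{equ:D}. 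You need some version of this two-step argument to close Case 2 when $k_j$ is bounded.
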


\begin{proof}
Assume $y_j\geq -\sqrt{k_j/(2x)}$ for $j$ large enough. From the definition of infinite geodesics, we have $\lim_{j\to\infty} k_j=\infty$. Set $p_j=(-\sqrt{k_j/(2x)},k_j)$. Fix $z_1\leq z_2$. From the quadrangle inequality \eqref{equ:quadrangle_LP}, we have
\begin{align*}
\mathcal{A}(q_j\ar (z_2,1))-\mathcal{A}(p_j\ar (z_2,1))\geq \mathcal{A}(q_j\ar (z_1,1))-\mathcal{A}(p_j\ar (z_1,1))
\end{align*} 
for $j$ large enough. Rearranging the terms, we have
\begin{align*}
\mathcal{A}(q_j\ar (z_2,1))-\mathcal{A}(q_j\ar q_\gamma)-\big(\mathcal{A}(p_j\ar (z_2,1))-\mathcal{A}(p_j\ar (z_1,1))\big) \geq \mathcal{A}(q_j\ar (z_1,1))-\mathcal{A}(q_j\ar q_\gamma).
\end{align*}
Sending $j\to\infty$ and using \eqref{equ:D}, we get
\begin{align*}
T_\gamma(z_2,1)-\big(\mathcal{S}(x,z_2)-\mathcal{S}(x,z_1) \big)\geq T_\gamma(z_1,1).
\end{align*}
This gives \eqref{equ:nondecreasing}.

Next, we turn to the case that $y_j\leq -\sqrt{k_j/(2x)}$ for $j$ large enough. If $\lim_{j\to\infty}k_j=\infty$, then an argument similar to the above one gives \eqref{equ:nonincreasing} and we omit the details. It remains to consider the situation in which $\lim_{j\to\infty}k_j=k_*<\infty$. In this case, we have $k_j=k_*$ for $j$ large enough and $\lim_{j\to\infty}y_j=-\infty$. Fix $z_1\leq z_2$. We claim that for any $k\geq k_*$, we have
\begin{align}\label{equ:nonincreasing_2}
T_\gamma(z_2,1)-\mathcal{A}((-\sqrt{k/(2x)},k)\ar (z_2,1))\leq T_\gamma(z_1,1)-\mathcal{A}((-\sqrt{k/(2x)},k)\ar (z_1,1)).
\end{align}
Taking $k\to\infty$ in \eqref{equ:nonincreasing_2} and using \eqref{equ:D} yield \eqref{equ:nonincreasing}. Hence it remains to prove \eqref{equ:nonincreasing_2}. From now on, we fix $k\geq k_*$. Let $q'=(z',k_*)$ be a point such that 
$$
\mathcal{A}((-\sqrt{k/(2x)},k)\ar (z_1,1))=\mathcal{A}(-\sqrt{k/(2x)},k)\ar q')+\mathcal{A}(q'\ar (z_1,1)).
$$
From the triangle inequality \eqref{equ:triangle},
\begin{align*}
&\mathcal{A}((-\sqrt{k/(2x)},k)\ar (z_2,1))-\mathcal{A}((-\sqrt{k/(2x)},k)\ar (z_1,1))\\
\geq &\big(\mathcal{A}(-\sqrt{k/(2x)},k)\ar q')+\mathcal{A}(q'\ar (z_2,1))\big)-\big(\mathcal{A}(-\sqrt{k/(2x)},k)\ar q')+\mathcal{A}(q'\ar (z_1,1))\big)\\
=&\mathcal{A}(q'\ar (z_2,1))-\mathcal{A}(q'\ar (z_1,1)).
\end{align*}
For $j$ large enough such that $y_j\leq z'$ and $k_j=k_*$, we have from the quadrangle inequality \eqref{equ:quadrangle_LP}
\begin{align*}
\mathcal{A}(q'\ar (z_2,1))-\mathcal{A}(q'\ar (z_1,1))\geq \mathcal{A}(q_j\ar (z_2,1))-\mathcal{A}(q_j\ar (z_1,1)).
\end{align*}
In short,
\begin{align*}
\mathcal{A}((-\sqrt{k/(2x)},k)\ar (z_2,1))-\mathcal{A}((-\sqrt{k/(2x)},k)\ar (z_1,1))\geq \mathcal{A}(q_j\ar (z_2,1))-\mathcal{A}(q_j\ar (z_1,1))
\end{align*}
for $j$ large enough. Sending $j\to\infty$ yields \eqref{equ:nonincreasing_2}.
\end{proof}


\begin{corollary}\label{c:direction}
Fix a realization such that \eqref{equ:D} holds and \eqref{equ:Airy_sheet_parabolic} holds for some $C\in[0,\infty)$. Let $\gamma$ be an infinite geodesic with respect to $\mathcal{A}$ which ends at $(z_\gamma,1)$. Then
\begin{align*}
\sup_{q\in \gamma\cap (\mathbb{R}\times \mathbb{N})}\inf_{\substack{(y,k)\in \gamma\cap (\mathbb{R}\times \mathbb{N}) \\ (y,k)\preceq q }}\frac{k}{2y^2}=\inf_{q\in \gamma\cap (\mathbb{R}\times \mathbb{N})}\sup_{\substack{(y,k)\in \gamma\cap (\mathbb{R}\times \mathbb{N}) \\ (y,k)\preceq q }}\frac{k}{2y^2}. 
\end{align*}
Denote the above number by $x$. Suppose $x\in [0,\infty)$, then
\begin{align}\label{equ:T_gamma_formula}
T_\gamma(y,1)=\cS(x,y)-\cS(x,z_\gamma)\ \textup{for all}\  y\in\mathbb{R}.
\end{align}
Moreover, $x\in [0,\infty)$ holds provided there exists $z_0>z_\gamma$ such that $T_\gamma(z_0,1)<\infty$. 
\end{corollary}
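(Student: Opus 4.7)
The plan is to prove the three assertions in order, leveraging Proposition~\ref{prop:geodesic_monotone}, the quadrangle inequality (Lemma~\ref{lem:quadrangle_LP}), and the parabolic asymptotic \eqref{equ:Airy_sheet_parabolic} of the Airy sheet.

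For the sup-inf/inf-sup equality, I argue by contradiction. Writing $\alpha$ and $\beta$ for the two quantities, assume $\alpha<\beta$ and pick $x_1<x_2$ inside $(\alpha,\beta)\cap(0,\infty)$, which is non-empty whenever $\alpha<\beta$ in $[0,\infty]$. From the sequence $\{q_j\}$ of Definition~\ref{def:Action_from_geodesic} extract two subsequences, each still non-increasing in $\preceq$ and cofinal in $\gamma$ (hence still valid for computing $T_\gamma$ via \eqref{equ:T_gamma_1}), along which $k/(2y^2)\le x_1$ and $k/(2y^2)\ge x_2$ respectively. Feeding these into Proposition~\ref{prop:geodesic_monotone} gives that $T_\gamma(\cdot,1)-\mathcal{S}(x_1,\cdot)$ is non-increasing while $T_\gamma(\cdot,1)-\mathcal{S}(x_2,\cdot)$ is non-decreasing. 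Subtracting, $\mathcal{S}(x_2,z)-\mathcal{S}(x_1,z)$ is non-increasing in $z$; but \eqref{equ:Airy_sheet_parabolic} yields $\mathcal{S}(x_2,z)-\mathcal{S}(x_1,z)=2(x_2-x_1)z+O(1)+O((\log|z|)^2)\to+\infty$ as $z\to+\infty$, a contradiction.

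For the formula with $x\in(0,\infty)$, the full sequence $q_j$ has $k_j/(2y_j^2)\to x$, so for each $\epsilon\in(0,x)$ eventually both $y_j\le-\sqrt{k_j/(2(x+\epsilon))}$ and $y_j\ge-\sqrt{k_j/(2(x-\epsilon))}$ hold. The two halves of Proposition~\ref{prop:geodesic_monotone} sandwich $T_\gamma(\cdot,1)$ between functions with monotone deviation from $\mathcal{S}(x\pm\epsilon,\cdot)$; sending $\epsilon\to 0$ and using continuity of $\mathcal{S}$ in its first variable forces $T_\gamma(\cdot,1)-\mathcal{S}(x,\cdot)$ to be constant, and evaluating at $z_\gamma$ together with $T_\gamma(q_\gamma)=\lim_j T_{q_j,\gamma}(q_\gamma)=0$ identifies the constant as $-\mathcal{S}(x,z_\gamma)$. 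The boundary case $x=0$ requires care since Proposition~\ref{prop:geodesic_monotone} needs $x>0$: only the ``non-increasing'' half is available via $\epsilon>0$, yielding (after $\epsilon\downarrow 0$) the inequality $T_\gamma(z,1)\le\mathcal{S}(0,z)-\mathcal{S}(0,z_\gamma)$ on $z\ge z_\gamma$ and its reverse on $z\le z_\gamma$. One closes the remaining inequalities by direct LPP calculation: $k_j/(2y_j^2)\to 0$ forces either $k_j$ to stabilize at a finite level (and the geodesic then runs horizontally, so $T_{q_j,\gamma}$ reduces to an increment of line~1) or $|y_j|\gg\sqrt{k_j}$, and in either regime an auxiliary continuity argument for $\mathcal{S}(\cdot,z)$ at $0$ closes the identification.

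For the finiteness claim, I show $x=\infty$ contradicts $T_\gamma(z_0,1)<\infty$. When $k_j/(2y_j^2)\to\infty$ the reference points $p_j:=(-\sqrt{k_j/(2x_0)},k_j)$ satisfy $p_j\preceq q_j$ eventually for every $x_0>0$; the quadrangle inequality at sources $p_j,q_j$ and targets $(z_\gamma,1),(z,1)$ yields
\[
\mathcal{A}(q_j\ar(z,1))-\mathcal{A}(q_j\ar q_\gamma)\ge\mathcal{A}(p_j\ar(z,1))-\mathcal{A}(p_j\ar q_\gamma).
\]
Passing $j\to\infty$ and using \eqref{equ:D} gives $T_\gamma(z,1)\ge\mathcal{S}(x_0,z)-\mathcal{S}(x_0,z_\gamma)$ for every $x_0>0$; for $z>z_\gamma$ the right side tends to $+\infty$ as $x_0\to\infty$ by the parabolic asymptotic, contradicting finiteness at $z_0$. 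The hardest step is the $x=0$ boundary of the formula, where Proposition~\ref{prop:geodesic_monotone} supplies only one side of the sandwich; the other parts reduce cleanly to contradictions against the asymptotic shape of $\mathcal{S}$.
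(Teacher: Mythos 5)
Your first part (the sup-inf/inf-sup equality) and your last part (finiteness of $x$) are correct and, modulo presentation, follow the paper's lines of reasoning; in the last part you effectively re-derive the relevant half of Proposition~\ref{prop:geodesic_monotone} rather than invoking it, which is harmless redundancy. The genuine gap is the $x=0$ boundary case of \eqref{equ:T_gamma_formula}, and the fix you sketch there does not work.

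Specifically, your claim that when $k_j$ stabilizes at some $k_*$, ``the geodesic then runs horizontally, so $T_{q_j,\gamma}$ reduces to an increment of line~1'' is false unless $k_*=1$. The geodesic ends at $(z_\gamma,1)$; if $k_*>1$, the piece from $q_j=(y_j,k_*)$ still has to descend through levels $k_*-1,\dots,1$, and the quantities $A(q_j\ar(y,1))-A(q_j\ar(z_\gamma,1))$ are genuine LPP differences from level $k_*$, not increments of $\mathcal A(\cdot,1)$. More fundamentally, ``continuity of $\mathcal S(\cdot,z)$ at $0$'' cannot produce the missing monotonicity direction: sending $\varepsilon\downarrow 0$ in Proposition~\ref{prop:geodesic_monotone} only yields that $T_\gamma(\cdot,1)-\mathcal S(0,\cdot)$ is non-increasing, because the non-decreasing half of that proposition requires $y_j\geq -\sqrt{k_j/(2\varepsilon)}$, which fails for all small $\varepsilon$ when $k_j/(2y_j^2)\to 0$; continuity of the sheet only passes the one-sided bound to the limit, it does not manufacture the opposite bound. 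What closes the $x=0$ case in the paper is entirely different: one uses $\mathcal S(0,z)=\mathcal A(z,1)$ and then the fact that $T_\gamma$ is itself an $\mathcal A$-action (Lemma~\ref{lem:Bgamma_action}), so the action recursion \eqref{equ:Local_metric_composition_law} directly gives $T_\gamma(z_2,1)\geq T_\gamma(z_1,1)+\mathcal A(z_2,1)-\mathcal A(z_1,1)$ for $z_1\leq z_2$, i.e.\ $T_\gamma(\cdot,1)-\mathcal S(0,\cdot)$ is non-decreasing. Combined with the non-increasing bound you already have, this forces constancy and evaluates to $0$ at $z_\gamma$. You need to replace your vague ``direct LPP calculation'' with this application of the action recursion.
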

We call $x$ the {\bf direction} of the infinite geodesic $\gamma$.

\begin{proof} Let
\begin{align*}
\underline x=\sup_{q\in \gamma\cap (\mathbb{R}\times \mathbb{N})}\inf_{\substack{(y,k)\in \gamma\cap (\mathbb{R}\times \mathbb{N}) \\ (y,k)\preceq q }}\frac{k}{2y^2},\ \bar x=\inf_{q\in \gamma\cap (\mathbb{R}\times \mathbb{N})}\sup_{\substack{(y,k)\in \gamma\cap (\mathbb{R}\times \mathbb{N}) \\ (y,k)\preceq q }}\frac{k}{2y^2}.
\end{align*}
Suppose $\underline{x}<\bar{x}$. Fix arbitrary $x'\in (\underline{x},\bar{x})\subset (0,\infty)$. From Proposition~\ref{prop:geodesic_monotone}, $T_\gamma(z,1)-S(x',z)$ does not depend on $z$. From $T_\gamma(z_\gamma,1)=0$, we have $T_\gamma(z,1)=S(x',z)-S(x',z_\gamma)$ for all $z\in\mathbb{R}$. In particular, for any $x'<x''$ in $ (\underline{x},\bar{x})$ we have $S(x',z)-S(x'',z)\equiv S(x',z_\gamma)-S(x'',z_\gamma)$ for all $z\in\mathbb{R}$. This is impossible in view of \eqref{equ:Airy_sheet_parabolic}. Therefore, we must have $\underline{x}=\bar{x}=:x$.

Next, we prove \eqref{equ:T_gamma_formula}. We separate the discussion depending on $x\in (0,\infty)$ or $x=0$. Assume $x\in (0,\infty)$. From Proposition~\ref{prop:geodesic_monotone}, for any $0<\varepsilon<x$ and $z_2\geq z_1$, we have
\begin{align*}
T_\gamma(z_2,1)-\cS(x-\varepsilon,z_2)\geq T_\gamma(z_1,1)-\cS(x-\varepsilon,z_1) 
\end{align*} 
and
\begin{align*}
T_\gamma(z_2,1)-\cS(x+\varepsilon,z_2)\leq T_\gamma(z_1,1)-\cS(x+\varepsilon,z_1). 
\end{align*} 
Sending $\varepsilon\to 0$ and using $T_\gamma(z_\gamma,1)=0$ yield \eqref{equ:T_gamma_formula}. Now we assume $x=0$. From Proposition~\ref{prop:geodesic_monotone}, for any $z_2\geq z_1$, we have
\begin{align*}
T_\gamma(z_2,1)-\cS(0,z_2)\leq T_\gamma(z_1,1)-\cS(0,z_1). 
\end{align*}  
Using $\cS(0,z)=\mathcal{A}(z,1)$, we get
\begin{align*}
T_\gamma(z_2,1)\leq T_\gamma(z_1,1)+\mathcal{A}(z_2,1)-\mathcal{A}(z_1,1). 
\end{align*}
On the other hand, the action recursion \eqref{equ:Local_metric_composition_law} gives
\begin{align*}
T_\gamma(z_2,1)\geq T_\gamma(z_1,1)+\mathcal{A}(z_2,1)-\mathcal{A}(z_1,1). 
\end{align*}
Hence \eqref{equ:T_gamma_formula} holds true.  

Lastly, we assume there exists $z_0>z_\gamma$ such that $T_\gamma(z_0,1)<\infty$ and show that $x<\infty$. Suppose $x=\infty$. From \eqref{equ:nondecreasing}, we have
\begin{align*}
T_\gamma(z_0,1)=T_\gamma(z_0,1)-T_\gamma(z_\gamma,1)\geq \mathcal{S} (x',z_0)-\mathcal{S} (x',z_\gamma)
\end{align*}
for all $x'>0$. Sending $x'\to \infty$ and applying \eqref{equ:Airy_sheet_parabolic}, we have $T_\gamma(z_0,1)=\infty$, which contradicts the assumption.
\end{proof}

\begin{proof}[Proof of Proposition \ref{prop:main_1}]
Fix a realization such that \eqref{equ:D} holds and \eqref{equ:Airy_sheet_parabolic} holds for some $C\in [0,\infty)$. Let $T$ be an upper semicontinuous $\mathcal{A}$-action with $T(y,1)\in\mathbb{R}$ for all $y\in\mathbb{R}$. From Proposition~\ref{prop:Bq_geodesic}, for all $z\in\mathbb{R}$ there exists an infinity $(T,(z,1))$-geodesic $\gamma(z)$. From Lemma \ref{l:decomp}, we have for any $z,z'\in \mathbb{R}$,
\begin{align*}
T_{\gamma(z)}(z',1)\leq T(z',1)-T(z,1)<\infty.
\end{align*}
From Corollary~\ref{c:direction}, the direction of $\gamma(z)$, denoted by $d(z)$, belongs to $[0,\infty)$. From Proposition~\ref{prop:decomposition} and Corollary~\ref{c:direction},
\begin{align*}
T(y,1)=\sup_{z\in\mathbb{R}}\big( T_{\gamma(z)}(y,1)+T(z,1) \big)=\sup_{z\in\mathbb{R}}\big( \cS(d(z),y)-\cS(d(z),z) +T(z,1) \big).
\end{align*}
Breaking down the supremum according to the values of $d(z)$, we get
\begin{align*}
T(y,1)=&\sup_{x\geq 0}\sup_{z:d(z)=x} \big( \cS(x,y)-\cS(x,z) +T(z,1) \big)\\
=&\sup_{x\geq 0}\left(\cS(x,y)+\sup_{z:d(z)=x} \big( -\cS(x,z) +T(z,1) \big)\right).
\end{align*} 
Here we adopt the convention that the supremum of an empty set is $-\infty$. Set the function $g(x)$ by
\begin{align*}
g(x)=\sup_{z:d(z)=x} \big( -\cS(x,z) +T(z,1) \big).
\end{align*}
Then we get
\begin{align*}
T(y,1)=\sup_{x\geq 0}\left(\cS(x,y)+g(x) \right).
\end{align*}
Since $\cS$ is continuous, in the left expression $g$ can be replaced by its upper semicontinuous version $f(x)=\limsup_{y\to x} g(x)$, where in the limsup $y=x$ is allowed. Since $T(1,y)\in\mathbb{R}$, $f(x)\in\mathbb R\cup \{-\infty\}$ and $f\not\equiv -\infty$.
\end{proof}
\begin{proof}[Proof of Proposition~\ref{prop:main_2}]
By Lemma 3.4 in \cite{sarkar2021brownian}, almost surely, for all $x\in \mathbb{Q}_+$, there exists an infinite geodesic $\sigma(x)$ which ends at $(0,1)$ and has the direction $x$. Through approximation, we obtain infinite geodesics $\sigma(x)$ ending at $(0,1)$ in direction $x$ for all $x\in [0,\infty)$. From Corollary \ref{c:direction}, $T_{\sigma(x)}(y,1)=\cS(x,y)-\cS(x,0)$. 
Define $T^x$ on $\mathbb{R}\times\mathbb{N}$ by
\begin{align*}
T^x(y,k)=T_{\sigma(x)}(y,k)+\cS(x,0).
\end{align*}
From Lemma~\ref{lem:Bgamma_action}, $T^x$ is an $\mathcal{A}$-action. 
Let 
$$
T(y,k)=\sup_{x\geq 0} (T^x(y,k)+f(x)).
$$ 
From Lemma~\ref{lem:actionsup}, $T$ is an $\mathcal{A}$-action. Moreover,
\begin{align*}
T(y,1)=\sup_{x\geq 0} (T^x(y,1)+f(x))&=\sup_{x\geq 0} (T_{\sigma(x)}(y,1)+\cS(x,0)+f(x))\\&=\sup_{x\geq 0} (\cS(x,y)+f(x))=\cS(f,y). \qedhere
\end{align*}
\end{proof}

\subsection{Convergence to the Airy sheet}\label{sec:conv_to_AS}

In this subsection, we unify and formalize the convergence result that we will apply to last passage and directed polymer models in the coming section. We first informally describe the kind of setup that we tend to encounter. We will see concrete examples in the next section. This will provide background for the rigorous definitions and theorems in the end. 

We usually start with independent random variables indexed by a strip $\mathbb Z \times \{1,\ldots, n\}$. An RSK isometry argument shows that last passage/partition function values from points in $\mathbb{Z}\times \{n\}$ are represented by similar values through a transformed environment. These environments are typically nonintersecting/weakly nonintersecting walks. For the pattern they present when graphed they are called melons. In semidiscrete models, the environment and melon is given by $n$ functions $A:[0,\infty)\times \{1,\ldots n\}\to \mathbb R$. It helps to represent discrete models the same way for unity.

Next, we apply shift and scaling to the melons in which their tops converge to the Airy line ensemble. We denote the scaled melons $Z_n(0,y,k)$. For $x>0$ we let $Z_n(x,y,k)$ be the last passage/partition value from $(-\alpha_n+x,n)$ to $(y,k)$ in the scaled melon. In the convergence to the Airy line ensemble, the point $(-\alpha_n+x,n)$ tends to infinity, and all we remember in the limit is distances from that point. After this informal description, we need a single rigorous definition. 

\begin{definition}\label{def:action_representation}
Fix $\alpha\in[0,\infty]$ and $j\in \mathbb{N}\cup\{\infty\}$ with $j\geq 2$. Let $H=\{(x,y)\in [0,\infty)\times\mathbb{R}\, :\, y\geq x-\alpha\}$ and let $J=\{n\in\mathbb{N}\, :\, n<j\}.$

An {\bf action representation} is a function $Z:H\times J\to \mathbb R\cup\{ \pm\infty\}$ satisfying the following properties:
\begin{itemize}
\item[(Z1)] For each $(y,k)\in [-\alpha,\infty)\times J$, $Z(0,y,k)\in\mathbb{R}$ and is cadlag in $y$.
\item[(Z2)] For any $x>0$, $Z(x,\cdot,\cdot)$ is an upper semicontinuous action with respect to $Z(0,\cdot,\cdot)$.
\item[(Z3)] For each $(x,y)\in H$, $Z(x,y,1)\in\mathbb{R}$ and $Z(x,0,1)$ is cadlag in $x$. 
\item[(Z4)] For any $0\leq x_1\leq x_2$ and $y_1\leq y_2$, we have
\begin{align}\label{equ:quadrangle_action}
Z(x_1,y_1,1)+Z(x_2,y_2,1)\geq Z(x_1,y_2,1)+Z(x_2,y_1,1). 
\end{align} 
\end{itemize}
We say $Z$ is of size $(\alpha,j)$ to emphasize the role of $\alpha$ and $j$. If property (Z2) is replaced by:
\begin{itemize}
\item[(Z2')] For any $x>0$, $Z(x,\cdot,\cdot)$ is an upper semicontinuous $w$-polymer action with respect to $Z(0,\cdot,\cdot)$,
\end{itemize}
we say $Z$ a $w$-polymer action representation.
\end{definition}

We emphasize that we will not use the background story in order to work with action representations: Theorems \ref{thm:action-rep-sym} and \ref{t:action-rep_discrete} will apply to anything that fits this simple definition.  For the KPZ equation, the background story will be  different. 

For the next theorem, recall that $\mathcal S$ is the Airy sheet, $\mathcal A$ is the parabolic Airy line ensemble, and TW is the GUE Tracy-Widom random variable. 

\begin{theorem}\label{thm:action-rep-sym}
Let $ \alpha_n\uparrow\infty$ be a sequence in $[0,\infty]$, $j_n\uparrow\infty$ be a sequence in $\mathbb{N}\cup\{\infty\}$. For $n\in\mathbb{N}$, let $Z_n$ be either a random action representation of size $(\alpha_n,j_n)$, or a random $w_n$-polymer action representation of size $(\alpha_n,j_n)$ such that $w_n\uparrow\infty$. We make the following assumptions.
\begin{itemize}
\item[(Sym1)] For each $n\in\mathbb{N}$ and $(x,y)\in [0,\infty)\times\mathbb{R}$, $Z_n(x,y,1)$ has the same distribution as $Z_n(0,y-x,1)$. 
\item[(Sym2)] For each $n\in\mathbb{N}$, as a process in $x$, $Z_n(x,0,1)$ has the same distribution as $Z_n(0,-x,1)$.
\item[(A1)] Viewed as $\mathbb{D}(\mathbb{R}\times\mathbb{N})$-valued random variables, $Z_n(0,\cdot,\cdot)$ converges in distribution to $\mathcal{A}(\cdot,\cdot)$.  
\end{itemize}
Then there exists a coupling of $\{Z_n\}_{n=1}^\infty$ and the Airy sheet $\cS$ such that almost surely, $Z_n(\cdot,\cdot,1)$ converges locally uniformly to $\cS(\cdot,\cdot)$ on $[0,\infty)\times\mathbb{R}$.
\end{theorem}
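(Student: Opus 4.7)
The plan is to combine Skorokhod coupling with Corollary~\ref{c:main2}, using the convergence results of Section~3 to transfer the action/polymer structure to the limit.

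First, I would invoke the Skorokhod representation theorem, in view of (A1), to pass to a coupling on which $Z_n(0,\cdot,\cdot)\to\mathcal{A}(\cdot,\cdot)$ almost surely in $\mathbb{D}(\mathbb{R}\times\mathbb{N})$. Next I would establish tightness of $\{Z_n(\cdot,\cdot,1)\}$ as random upper semicontinuous fields on $[0,\infty)\times\mathbb{R}$ in the hypograph topology of the appendix. Pointwise tightness at any $(x,y)$ follows from (Sym1), since $Z_n(x,y,1)\stackrel{d}{=}Z_n(0,y-x,1)$, together with (A1). The quadrangle inequality (Z4) makes the discrete mixed difference $Z_n(x_2,y_2,1)-Z_n(x_2,y_1,1)-Z_n(x_1,y_2,1)+Z_n(x_1,y_1,1)$ nonnegative, and combined with (Sym2), which controls $Z_n(\cdot,0,1)\stackrel{d}{=}Z_n(0,-\cdot,1)$, this upgrades one-parameter tightness to joint tightness in $(x,y)$.

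Extracting a further subsequence (still indexed by $n$), we obtain an almost sure limit $Z$ to which I would apply the machinery of Section~3. For each fixed $x>0$, $Z_n(x,\cdot,\cdot)$ is an upper semicontinuous $Z_n(0,\cdot,\cdot)$-action (respectively $w_n$-polymer $Z_n(0,\cdot,\cdot)$-action) by (Z2) or (Z2'). Proposition~\ref{p:lp-to-lp} in the action case, or Proposition~\ref{p:polymer_to_lp} (using $w_n\uparrow\infty$) in the polymer case, then shows that the slice $Z(x,\cdot,\cdot)$ is an $\mathcal{A}$-action. Using (Sym1) once more, for each $n\in\mathbb{Z}$ we have $Z_m(x,n,1)\stackrel{d}{=}Z_m(0,n-x,1)\to\mathcal{A}(n-x,1)$ in distribution, so the subsequential limit satisfies $Z(x,n,1)+(n-x)^2\stackrel{d}{=}\mathrm{TW}$. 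Corollary~\ref{c:main2} then forces $Z(x,\cdot,1)=\mathcal{S}(x,\cdot)$ almost surely. A countable-dense-in-$x$ argument, using quadrangle monotonicity and continuity of $\mathcal{S}$ (Proposition~\ref{prop:DOV}), extends this identification to all $x\ge 0$ simultaneously; since the limit is uniquely determined, the full sequence converges.

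Finally, to upgrade to local uniform convergence on compact subsets of $[0,\infty)\times\mathbb{R}$, I would exploit continuity of $\mathcal{S}$ together with the monotonicity built into (Z4): fixing a reference point $y_0$, the map $x\mapsto Z_n(x,y,1)-Z_n(x,y_0,1)$ is monotone in $x$ on either side of $y_0$, and the pointwise limit $\mathcal{S}(\cdot,y)-\mathcal{S}(\cdot,y_0)$ is continuous, so a Dini-type argument promotes pointwise to local uniform convergence. The main obstacle throughout is the tightness step: we need convergence in a topology strong enough for the action-recursion convergence results of Section~3 to produce an $\mathcal{A}$-action in the limit, and strong enough that the Tracy-Widom marginal hypothesis of Corollary~\ref{c:main2} is preserved under passage to the subsequential limit.
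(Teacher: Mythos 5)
Your proposal follows essentially the same route as the paper---Skorokhod coupling, passing the action recursion to the limit via Propositions~\ref{p:lp-to-lp}/\ref{p:polymer_to_lp}, invoking Corollary~\ref{c:main2}, then upgrading to joint locally uniform convergence using the quadrangle inequality---but there is a genuine gap in how you feed the Tracy--Widom marginal into Corollary~\ref{c:main2}. After extracting a subsequential $\uc$-limit $T^x$ of $Z_n(x,\cdot,\cdot)$, you assert that ``the subsequential limit satisfies $Z(x,n,1)+(n-x)^2 \ed \mathrm{TW}$.'' That does not follow. Hypograph convergence controls suprema over closed and open sets (\eqref{equ:UC_limit_closed}--\eqref{equ:UC_limit_open}), not pointwise values: a sequence of upper semicontinuous functions can have spikes drifting toward a point $y_0$, so that the $\uc$-limit value $T^x(y_0,1)$ is strictly larger than $\lim_n Z_n(x,y_0,1)$. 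Thus knowing $Z_n(x,y,1)+(y-x)^2\to\mathrm{TW}$ in distribution for each fixed $y$ does not immediately give a Tracy--Widom marginal for $T^x(y,1)$, and the hypothesis of Corollary~\ref{c:main2} is not verified. You acknowledge this as ``the main obstacle'' in your last sentence, but you leave it unresolved, and it is exactly where the work lies.

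The paper closes this gap inside the proof of Proposition~\ref{p:action-rep}, in the step establishing $T^x(y,1)=\tilde{\cS}_{x,y}$ (equation \eqref{equ:action-rep_0.5}). One couples so that $Z_n(x,y,1)\to\tilde{\cS}_{x,y}$ almost surely for rational $(x,y)$, and then identifies the pointwise limits with the $\uc$-limit. The quadrangle inequality (Z4) makes $y\mapsto Z_n(x,y,1)-Z_n(0,y,1)$ nondecreasing, so Lemma~\ref{lem:UC_no_jump} reduces the identification to showing that $y\mapsto\tilde{\cS}_{x,y}-\mathcal{A}(y,1)$ has no right jumps at rationals. This no-jump property is proved via the monotone-convergence computation \eqref{equ:action-rep_2}: the Tracy--Widom marginal makes $\mathbb{E}[\tilde{\cS}_{x,y}-\mathcal{A}(y,1)]=-x^2+2xy$, which is continuous in $y$, so the a.s.\ nondecreasing right limit must equal the value. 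In other words, (A2) is not merely passed to Corollary~\ref{c:main2} as a black box---it is used quantitatively to control the $\uc$-limit at the pointwise level. Your write-up needs this argument (or an equivalent substitute) before Corollary~\ref{c:main2} can be invoked; the remaining steps, including the upgrade to locally uniform convergence (the paper uses Lemma~\ref{l:quadrangle}, your Dini-type argument is in the same spirit), are sound.
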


Theorem~\ref{thm:action-rep-sym} is a simple consequence of the proposition below.

\begin{proposition}\label{p:action-rep} 
Let $Z_n$ be given as in Theorem~\ref{thm:action-rep-sym}. We make the following assumptions.
\begin{itemize}
\item[(A1)] Viewed as $\mathbb{D}(\mathbb{R}\times\mathbb{N})$-valued random variables, $Z_n(0,\cdot,\cdot)$ converges in distribution to $\mathcal{A}(\cdot,\cdot)$. 
\item[(A2)] For each $(x,y)\in[0,\infty)\times \mathbb{R} $, $Z_n(x,y,1)+(x-y)^2$ converges in distribution to TW.
\item[(A3)] Viewed as $\mathbb{D}([0,\infty))$-valued random variables, $Z_n(\cdot,0,1)$ is tight.   
\end{itemize}
Then there exists a coupling of $\{Z_n\}_{n=1}^\infty$ and the Airy sheet $\cS$ such that almost surely, $Z_n(\cdot,\cdot,1)$ converges locally uniformly to $\cS(\cdot,\cdot)$ on $[0,\infty)\times\mathbb{R}$.
\end{proposition}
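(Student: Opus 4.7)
My plan is to extract subsequential hypograph limits of $Z_n(x,\cdot,\cdot)$ for each rational $x\ge 0$, identify them as $\cS(x,\cdot)$ using Corollary~\ref{c:main} and an ergodic argument, upgrade to locally uniform convergence via the quadrangle inequality (Z4), and then appeal to Skorokhod to obtain the coupling.

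\textbf{Tightness and action structure.} By (A1) and (A3), pass to a subsequence and apply Skorokhod so that, almost surely, $Z_n(0,\cdot,\cdot)\to\cA$ locally uniformly and $Z_n(\cdot,0,1)$ converges in $\mathbb{D}([0,\infty))$ to some $V$. For each $x\in\mathbb{Q}_{\ge 0}$, the quadrangle inequality (Z4) with $x_1=0$, $x_2=x$, $y_2=b+1$ yields
\[
Z_n(x,y,1)\le Z_n(x,b+1,1)+Z_n(0,y,1)-Z_n(0,b+1,1) \qquad \text{for } y\le b,
\]
whose right-hand side is tight uniformly over $y\in[a,b]$ by (A2) and the Skorokhod convergence at $x=0$. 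Extension to levels $k\ge 2$ is immediate from $T(y,k)\le T(y,1)$ in the last-passage case, and follows from the Euler--Lagrange bound \eqref{e:EL-ge_2} combined with $\log_{w_n}\delta\to 0$ in the polymer case. This gives hypograph tightness of $Z_n(x,\cdot,\cdot)$ on $\mathbb{R}\times\mathbb{N}$; by diagonalization pass to a further subsequence along which $Z_n(x,\cdot,\cdot)\to T^x$ in $\uc(\mathbb{R}\times\mathbb{N})$ almost surely for every $x\in\mathbb{Q}_{\ge 0}$. Propositions~\ref{p:lp-to-lp} and \ref{p:polymer_to_lp} then identify each $T^x$ as an $\cA$-action.

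\textbf{Identifying $T^x$ with $\cS(x,\cdot)$.} Continuity points of $T^x(\cdot,1)$ form a dense set because $T^x(\cdot,1)-\cA(\cdot,1)$ is monotone and u.s.c.; at each such $y$, hypograph convergence yields $Z_n(x,y,1)\to T^x(y,1)$ almost surely, so (A2) gives $T^x(y,1)+(x-y)^2\ed\mathrm{TW}$. Since $\mathbb{E}[\mathrm{TW}^+]<\infty$, a Fatou estimate for $\mathbb{E}[(T^x(y,1)+y^2-2xy)^+/y]$ along continuity points $y_m\to\infty$ produces $\liminf_{y\to\infty}(T^x(y,1)+y^2)/y\le 2x$ almost surely, and symmetrically for $y\to-\infty$. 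Corollary~\ref{c:main} then gives $T^x(y,1)=\cS(x,y)+C_x$ for some random constant $C_x$. Continuity of $\cS(x,\cdot)$ implies continuity of $T^x(\cdot,1)$, so (A2) now applies at every integer, and Lemma~\ref{l:ergodic} applied to the stationary ergodic process $n\mapsto\cS(x,n)+(n-x)^2$ pins down $C_x=0$ almost surely.

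\textbf{Uniform upgrade and coupling.} By (Z4), $h_n(x,y):=Z_n(x,y,1)-Z_n(0,y,1)$ is nondecreasing in $y$ for each $x$, and for rationals $x_1\le x\le x_2$ its $y$-increments are sandwiched between the rational-$x$ increments; together with continuity of the limit $h(x,y)=\cS(x,y)-\cA(y,1)$ in both variables, a two-dimensional Dini argument promotes pointwise a.s.\ convergence on the dense set $\mathbb{Q}_{\ge 0}\times\mathbb{R}$ to locally uniform convergence on $[0,\infty)\times\mathbb{R}$. Adding the locally uniform convergence $Z_n(0,\cdot,1)\to\cA(\cdot,1)$ gives $Z_n(\cdot,\cdot,1)\to\cS(\cdot,\cdot)$ locally uniformly almost surely; since every subsequential distributional limit equals $\cS$ in law, the full sequence converges weakly and Skorokhod supplies the coupling. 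The central obstacle is the growth hypothesis of Corollary~\ref{c:main}: (A2) is a purely distributional Tracy--Widom statement, while the corollary requires an almost-sure $\liminf$ growth bound at $y=\pm\infty$; bridging this gap through the integrability of $\mathrm{TW}^+$ is the essential quantitative input.
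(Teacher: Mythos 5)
Your overall architecture matches the paper's: extract almost sure hypograph limits $T^x$ for rational $x$, recognize them as $\cA$-actions via Proposition~\ref{p:lp-to-lp} or \ref{p:polymer_to_lp}, identify $T^x(\cdot,1)$ with $\cS(x,\cdot)$ using the growth condition plus the ergodic argument (you are in effect re-deriving Corollary~\ref{c:main2} rather than citing it), and then upgrade to locally uniform two-parameter convergence. But the identification step has a genuine gap.

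You write that at continuity points $y$ of $T^x(\cdot,1)$, hypograph convergence forces $Z_n(x,y,1)\to T^x(y,1)$, and then invoke (A2) to conclude $T^x(y,1)+(x-y)^2\ed\mathrm{TW}$ there. The first part is correct (monotonicity of $T^x(\cdot,1)-\cA(\cdot,1)$ plus \eqref{equ:UC_limit_closed} and \eqref{equ:UC_limit_open} do give pointwise convergence at continuity points of the limit). The problem is that the set of continuity points is \emph{random}: it depends on the realization of $T^x$. Assumption~(A2) provides the Tracy--Widom marginal only at \emph{deterministic} $(x,y)$, so you cannot extract any distributional statement about $T^x(y_m,1)$ along the random sequence $y_m$ of continuity points, and the Fatou argument --- which you correctly identify as the mechanism behind the growth bound in Corollary~\ref{c:main2} --- has no deterministic TW marginal to feed on. What hypograph convergence does give you unconditionally at a deterministic rational $q$ is only the one-sided inequality $T^x(q,1)\ge\tilde{\cS}_{x,q}$ (from \eqref{equ:UC_limit_closed} with $F=\{q\}$, where $\tilde{\cS}_{x,q}$ is the pointwise Skorokhod limit coming from (A2)). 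This is the wrong direction for the growth condition: it makes the positive part larger, not smaller, so it cannot be used to bound $\liminf (T^x(y,1)+y^2)/y$ from above.

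The paper closes precisely this gap by a separate argument before invoking Corollary~\ref{c:main2}. It first establishes \eqref{equ:action-rep_0.5}: $T^x(q,1)=\tilde{\cS}_{x,q}$ almost surely for each deterministic rational $q$. This is done by proving right-continuity of the pointwise limit $\tilde{\cS}_{x,\cdot}-\cA(\cdot,1)$ along rationals, via a monotone-convergence-plus-expectation computation (equations \eqref{equ:action-rep_1}--\eqref{equ:action-rep_2}): the monotone function $\tilde{\cS}_{x,q}-\cA(q,1)$ has expectation $-x^2+2xq$ (from the TW marginals of both $\tilde{\cS}_{x,q}$ and $\cA(q,1)$), which is continuous in $q$; monotone convergence then forces the a.s.\ right limit along rationals to equal the value at $q$. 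Lemma~\ref{lem:UC_no_jump} converts this into equality of the hypograph limit with the pointwise limit. Only after that can the Tracy--Widom marginal be transferred to $T^x(q,1)$ at deterministic $q$, making your Fatou/ergodic argument (i.e., Corollary~\ref{c:main2}) applicable. Your final paragraph correctly senses that the growth hypothesis is the central obstacle, but misdiagnoses the nature of the difficulty: the issue is not merely going from distributional to almost-sure, but going from hypograph convergence to pointwise convergence at deterministic points, so that the distributional hypothesis becomes usable in the first place.
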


\begin{proof}[Proof of Theorem~\ref{thm:action-rep-sym}]
Assumption (A3) of Proposition~\ref{p:action-rep} follows directly from Assumptions (A1) and (Sym2). Similarly, because the Airy line ensemble is continuous and $\mathcal{A}(y,1)+y^2$ has the Tracy-Widom distribution, Assumption (A2) of Proposition~\ref{p:action-rep} is a consequence of Assumptions (A1) and (Sym1). The assertion then follows from applying Proposition~\ref{p:action-rep}.
\end{proof}

Before proving Proposition~\ref{p:action-rep}, we define the discrete version of action representations and state an associated convergence result.

\begin{definition}\label{def:action_representation_discrete}
Fix $\delta>0$, $\alpha\in[0,\infty]$ and $j\in \mathbb{N}\cup\{\infty\}$ with $j\geq 2$. Let 
$$H_\delta=\{(x,y)\in ([0,\infty)\cap\delta_n\mathbb{Z}) \times \delta_n\mathbb{Z} \, :\, y\geq x-\alpha\}$$ 
and let $J=\{n\in\mathbb{N}\, :\, n<j\}.$

A $\delta$-discrete action representation is a function $Z_\delta:H_\delta  \times J\to \mathbb R\cup\{ \pm\infty\}$ satisfying the following properties:
\begin{itemize}  
\item[(Zd1)] For any $x\in (0,\infty)\cap\delta\mathbb{Z}$, $Z_\delta(x,\cdot,\cdot)$ is a $\delta$-discrete action with respect to $Z_\delta(0,\cdot,\cdot)$.
\item[(Zd2)] For each $(x,y)\in H_\delta$, $Z_\delta(x,y,1)\in\mathbb{R}$. 
\item[(Zd3)] For any $0\leq x_1\leq x_2$ and $y_1\leq y_2$ in $\delta\mathbb{Z}$, we have
\begin{align}\label{equ:quadrangle_actiond}
Z_\delta(x_1,y_1,1)+Z_\delta(x_2,y_2,1)\geq Z_\delta(x_1,y_2,1)+Z_\delta(x_2,y_1,1).
\end{align} 
\end{itemize}
\noindent We say $Z$ is of size $(\alpha,j)$ to emphasize the role of $\alpha$ and $j$. If property (Zd1) is replaced by:
\begin{itemize}
\item[(Zd1')] For any $x\in (0,\infty)\cap\delta\mathbb{Z}$, $Z_\delta(x,\cdot,\cdot)$ is a $\delta$-discrete $w$-polymer action with respect to $Z_\delta(0,\cdot,\cdot)$,
\end{itemize}
\noindent we say $Z$ a $\delta$-discrete $w$-polymer action representation.
\end{definition}

\begin{theorem}\label{t:action-rep_discrete} Let $\delta_n\downarrow 0$ be a sequence in $(0,\infty)$, $ \alpha_n\uparrow\infty$ be a sequence in $[0,\infty]$, $j_n\uparrow\infty$ be a sequence in $\mathbb{N}\cup\{\infty\}$. For $n\in\mathbb{N}$, let $Z_{\delta_n}$ be either a $\delta_n$-discrete random action representation of size $(\alpha_n,j_n)$, or a $\delta_n$-discrete random $w_n$-polymer action representation of size $(\alpha_n,j_n)$ such that $w_n\uparrow\infty$ and $\log_{w_n}\delta_n\to 0$. 

Extend $Z_{\delta_n}$ through linear interpolation in the $y$ coordinate and a piecewise constant and cadlag in the $x$ coordinate. Denote the extension by $\overline{Z}_{\delta_n}$. Explicitly,
\begin{align*}
\overline{Z}_{\delta_n}(x,y,k)=\frac{\delta_n(\lfloor\delta_n^{-1} y\rfloor+1)-y}{\delta_n} {Z}_{\delta_n}(\delta_n\lceil\delta_n^{-1} x\rceil,\delta_n\lfloor\delta_n^{-1} y\rfloor,k)\\
+\frac{y-\delta_n\lfloor\delta_n^{-1} y\rfloor}{\delta_n}{Z}_{\delta_n}(\delta_n\lceil\delta_n^{-1} x\rceil,\delta_n(\lfloor\delta_n^{-1} y\rfloor+1),k).
\end{align*}

We make the following assumptions.
\begin{itemize}
\item[(Symd1)] For each $n\in\mathbb{N}$ and $(x,y)\in ([0,\infty)\cap \delta_n\mathbb{Z})\times\delta_n\mathbb{Z}$, $Z_{\delta_n}(x,y,1)$ has the same distribution as $Z_{\delta_n}(0,y-x,1)$. 
\item[(Symd2)] For each $n\in\mathbb{N}$, as a process in $x$, $Z_{\delta_n}(x,0,1)$ has the same distribution as $Z_{\delta_n}(0,-x,1)$. 
\item[(Ad1)] Viewed as $\mathbb{D}(\mathbb{R}\times\mathbb{N})$-valued random variables, $\overline{Z}_{\delta_n}(0,\cdot,\cdot)$ converges in distribution to $\mathcal{A}(\cdot,\cdot)$. 
\end{itemize}
Then there exists a coupling of $\{\overline{Z}_{\delta_n}\}_{n=1}^\infty$ and the Airy sheet $\cS$ such that almost surely, $\overline{Z}_{\delta_n}(\cdot,\cdot,1)$ converges locally uniformly to $\cS(\cdot,\cdot)$ on $[0,\infty)\times\mathbb{R}$.
\end{theorem}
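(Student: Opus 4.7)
The plan is to mirror the proof of Theorem~\ref{thm:action-rep-sym}, but pass everything through the extensions $\overline{Z}_{\delta_n}$ and invoke the discrete-to-continuous convergence Proposition~\ref{p:polymer_to_lp_discrete} in place of Proposition~\ref{p:polymer_to_lp}. First I will establish pointwise marginal convergence and tightness from the symmetry assumptions (Symd1)--(Symd2) combined with (Ad1); then I will use the quadrangle inequality \eqref{equ:quadrangle_actiond} to upgrade to joint tightness of $\overline{Z}_{\delta_n}(\cdot,\cdot,1)$ on compact subsets of $[0,\infty)\times\mathbb{R}$; and finally I will identify subsequential limits with the Airy sheet via Corollary~\ref{c:main2}.

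For the marginal step, (Symd1) gives $Z_{\delta_n}(x,y,1)\ed Z_{\delta_n}(0,y-x,1)$ on the lattice $(\delta_n\mathbb{Z})^2$. Together with the continuity of $\mathcal{A}$ and the fact that the extension $\overline{Z}_{\delta_n}$ differs from its value at the nearest lattice point by only $O(\delta_n)$ (controlled by \eqref{equ:discrete_EL-ge_1}), (Ad1) yields $\overline{Z}_{\delta_n}(x,y,1)+(x-y)^2\cd\mathrm{TW}$ for every $(x,y)\in[0,\infty)\times\mathbb{R}$. Similarly, (Symd2) with (Ad1) yields tightness of $\overline{Z}_{\delta_n}(\cdot,0,1)$ in $\mathbb{D}([0,\infty))$, since that process is equidistributed with a reflection of $\overline{Z}_{\delta_n}(0,\cdot,1)$. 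The quadrangle inequality \eqref{equ:quadrangle_actiond} expresses monotonicity of the increments $\overline{Z}_{\delta_n}(x,y_2,1)-\overline{Z}_{\delta_n}(x,y_1,1)$ in $x$, so combining these inputs with a standard monotonicity argument promotes pointwise tightness on a dense grid to joint local equicontinuity of $\overline{Z}_{\delta_n}(\cdot,\cdot,1)$.

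Along any convergent subsequence I would apply Skorokhod representation to assume, almost surely, joint convergence of $\overline{Z}_{\delta_n}(0,\cdot,\cdot)$ to $\mathcal{A}$ in $\mathbb{D}(\mathbb{R}\times\mathbb{N})$ and of $\overline{Z}_{\delta_n}(\cdot,\cdot,1)$ to a continuous limit $Z(\cdot,\cdot)$ locally uniformly. For each fixed $x>0$, a further subsequence along which $\overline{Z}_{\delta_n}(\delta_n\lceil\delta_n^{-1}x\rceil,\cdot,k)$ converges in $\uc(\mathbb{R}\times\mathbb{N})$ for all $k\geq 1$ can be extracted using the bounds \eqref{equ:discrete_EL-le}--\eqref{equ:discrete_EL-ge_2} together with tightness at $k=1$; call the limit $\widetilde Z(x,\cdot,\cdot)$, which agrees with $Z(x,\cdot,1)$ on the top level. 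Proposition~\ref{p:polymer_to_lp_discrete} applied with $A_{\delta_n}=Z_{\delta_n}(0,\cdot,\cdot)$ and $T_{\delta_n}=Z_{\delta_n}(\delta_n\lceil\delta_n^{-1}x\rceil,\cdot,\cdot)$ then shows that $\widetilde Z(x,\cdot,\cdot)$ is an $\mathcal{A}$-action. The marginal Tracy-Widom convergence above gives $Z(x,n,1)+(n-x)^2\ed\mathrm{TW}$ for all $n\in\mathbb{Z}$, so Corollary~\ref{c:main2} forces $\widetilde Z(x,\cdot,1)=\cS(x,\cdot)$ almost surely. Since every subsequential limit is the Airy sheet, the full sequence converges, and a single application of Skorokhod representation produces the required coupling.

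The main obstacle is the joint tightness/equicontinuity step: the quadrangle inequality gives only one-sided monotonicity of two-parameter increments, so turning it into a genuine two-dimensional modulus-of-continuity estimate requires combining it with Tracy-Widom tail bounds and the monotonicity of $y\mapsto\overline{Z}_{\delta_n}(x,y,1)-\overline{A}_{\delta_n}(y,1)$ to bracket the field on compact rectangles. A secondary technical point is that $\overline{Z}_{\delta_n}$ is cadlag rather than continuous in $x$, so the natural convergence is in the Skorokhod topology on $[0,\infty)$; since the putative limit $\cS(\cdot,y)$ is continuous in $x$, this jump structure disappears in the limit and causes no real obstruction.
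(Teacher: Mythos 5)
Your overall plan is in the right direction: mirror Theorem~\ref{thm:action-rep-sym} via Proposition~\ref{p:action-rep}, substitute Proposition~\ref{p:polymer_to_lp_discrete} for Proposition~\ref{p:polymer_to_lp} when identifying subsequential limits of $\overline Z_{\delta_n}(\delta_n\lceil\delta_n^{-1}x\rceil,\cdot,\cdot)$ as $\mathcal{A}$-actions, and then conclude via Corollary~\ref{c:main2}. This is indeed what the paper intends by ``the proof of Theorem~\ref{t:action-rep_discrete} is similar to the one of Theorem~\ref{thm:action-rep-sym}.''

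However, the step you flag as ``the main obstacle''---upgrading to joint local uniform convergence of the two-parameter field---is exactly where you diverge from the paper and where your proposal has a genuine gap. You propose to first establish joint tightness of $\overline Z_{\delta_n}(\cdot,\cdot,1)$ on compact rectangles, by combining the quadrangle inequality with Tracy-Widom tail bounds and monotonicity of $y\mapsto\overline Z_{\delta_n}(x,y,1)-\overline A_{\delta_n}(y,1)$, and only then apply Skorokhod representation. You do not carry this out, and it is indeed nontrivial. The paper's Proposition~\ref{p:action-rep} avoids any two-dimensional modulus-of-continuity estimate entirely: it applies Skorokhod representation only to a countable product of tight marginals---$A_n$ in $\mathbb{D}(\mathbb{R}\times\mathbb{N})$, $S_n(\cdot,0)$ in $\mathbb{D}([0,\infty))$, $T_n^x$ in the compact space $\uc(\mathbb{R}\times\mathbb{N})$ for rational $x$, and $S_n(x,y)$ for $(x,y)\in\mathbb{Q}_+\times\mathbb{Q}$---and then invokes the deterministic Lemma~\ref{l:quadrangle}: functions satisfying the quadrangle inequality behave like CDFs of measures on $\mathbb{R}^2$, so convergence along the two boundary lines $\{x=0\}$ and $\{y=0\}$ plus pointwise convergence on a dense set already forces local uniform convergence to a continuous limit. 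You appear to have overlooked this lemma; incorporating it dissolves your main obstacle. A further detail you elide is the no-jump argument needed to upgrade the $\uc$-convergence of $T_n^x(\cdot,1)$ to honest pointwise convergence at rationals (the paper's Lemma~\ref{lem:UC_no_jump}, fed by the monotone-convergence computation around \eqref{equ:action-rep_1}); this is what ultimately justifies the Tracy-Widom marginals used in Corollary~\ref{c:main2}, and it is not immediate from ``marginal tightness'' alone.
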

The proof of Theorem~\ref{t:action-rep_discrete} is similar to the one of Theorem \ref{thm:action-rep-sym}. We omit the details.

The rest of the subsection is devoted to proving Proposition~\ref{p:action-rep}. For this purpose, we need the following lemma, which is based on the fact that cadlag functions satisfying the quadrangle inequality are in fact CDFs of measures on $\mathbb R^2$. See the discussion in \cite[Section 4]{dauvergne2021scaling}. The proof is straightforward, so we omit it.

\begin{lemma}\label{l:quadrangle}Let $s_n:[0,\infty)\times \mathbb R\to \mathbb R$ be a sequence of functions that satisfy the quadrangle inequality. Let $s:[0,\infty)\times \mathbb R\to \mathbb R$ be a continuous function. Assume that 
\begin{itemize}
\item $s_n(0,\cdot)$ converges to $s(0,\cdot)$ locally uniformly on $\mathbb{R}$;
\item $s_n(\cdot,0)$ converges to $s(\cdot,0)$ locally uniformly on $[0,\infty)$;
\item For all $(x,y)\in\mathbb{Q}_+\times\mathbb{Q}$, $s_{n}(x,y)$ converges to $s(x,y)$.
\end{itemize}
Then $s_n\to s$ locally uniformly on $[0,\infty)\times\mathbb{R}$.
\end{lemma}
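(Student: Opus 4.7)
The plan is to pass to the ``supermodular remainder''
$h_n(x,y):=s_n(x,y)-s_n(x,0)-s_n(0,y)+s_n(0,0)$,
and the analogously defined $h$. Since $s_n(0,\cdot)\to s(0,\cdot)$ and $s_n(\cdot,0)\to s(\cdot,0)$ locally uniformly and $s_n(0,0)\to s(0,0)$, we have $s_n-h_n\to s-h$ locally uniformly, so it suffices to prove $h_n\to h$ locally uniformly on $[0,\infty)\times\mathbb{R}$. At every point of $\mathbb{Q}_+\times\mathbb{Q}$ this convergence holds pointwise by the third hypothesis, and $h$ is continuous.

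The key structural observation is that $h_n$ inherits the quadrangle inequality from $s_n$ (the correction terms cancel telescopically) and vanishes identically on both axes $\{x=0\}$ and $\{y=0\}$. Substituting one of $x_1=0$, $x_2=0$, $y_1=0$, $y_2=0$ into the quadrangle inequality for $h_n$ then gives three monotonicity properties: (i) for each fixed $x\ge 0$, $y\mapsto h_n(x,y)$ is non-decreasing on $\mathbb{R}$; (ii) for each fixed $y\ge 0$, $x\mapsto h_n(x,y)$ is non-decreasing on $[0,\infty)$; (iii) for each fixed $y\le 0$, $x\mapsto h_n(x,y)$ is non-increasing on $[0,\infty)$.

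Fix a compact set $K=[0,a]\times[-b,b]$ and $\varepsilon>0$. By uniform continuity of $h$ on a slight enlargement of $K$, choose $\delta>0$ so that any two points of that enlargement at distance at most $2\delta$ have $h$-values within $\varepsilon/2$. Take rational grids $0=x_0<\cdots<x_m$ and $y_0<\cdots<y_p$ with consecutive spacings below $\delta$ that cover $K$, arranged so that $y_{j^\ast}=0$ for some index. For $(x,y)\in K$ locate $i,j$ with $x_i\le x\le x_{i+1}$ and $y_j\le y\le y_{j+1}$. When $y\ge 0$ properties (i)--(ii) combine to give $h_n(x_i,y_j)\le h_n(x,y)\le h_n(x_{i+1},y_{j+1})$; when $y\le 0$ properties (i) and (iii) give $h_n(x_{i+1},y_j)\le h_n(x,y)\le h_n(x_i,y_{j+1})$.

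For $n$ large, pointwise convergence at the finitely many grid points yields $|h_n-h|<\varepsilon/2$ at every grid point, so combining each sandwich with the uniform continuity of $h$ forces $|h_n(x,y)-h(x,y)|<\varepsilon$ uniformly on $K$. Adding back the locally uniformly convergent boundary traces gives $s_n\to s$ locally uniformly on $[0,\infty)\times\mathbb{R}$. The only mild subtlety is the sign reversal of the $x$-monotonicity of $h_n$ at $y=0$, handled by splitting $K$ into its $y\ge 0$ and $y\le 0$ halves; once split, the argument is routine.
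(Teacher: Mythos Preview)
Your proof is correct. The paper omits its own proof, remarking only that functions satisfying the quadrangle inequality are CDFs of measures on $\mathbb{R}^2$; your ``supermodular remainder'' $h_n$ is precisely that CDF (normalized to vanish on the axes), and your monotonicity-plus-sandwich argument is the standard way to upgrade pointwise convergence of CDFs to uniform convergence against a continuous limit, so your approach is an explicit, self-contained realization of the idea the paper gestures at.
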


\begin{proof}[Proof of Proposition~\ref{p:action-rep}]
Let $A_n(\cdot,\cdot)=Z_n(0,\cdot,\cdot)$, $S_n(\cdot,\cdot)=Z_n(\cdot,\cdot,1)$, and $T_n^x(\cdot,\cdot)=Z_n(x,\cdot,\cdot)$ for $x>0$. By Lemma~\ref{lem:UC_compact}, for each $x>0$, $T^x(\cdot,\cdot)$ is tight in $\uc(\mathbb{R}\times\mathbb{N})$. All the target spaces, $\mathbb{D}(\mathbb{R}\times\mathbb{N}), \mathbb{R}, \mathbb{D}([0,\infty))$ and $\uc(\mathbb{R}\times\mathbb{N})$, are complete separable metric spaces. By the Skorokhod representation and tightness, we can couple $\{Z_n\}_{n=1}^\infty$ and $\mathcal{A}$ such that almost surely, all of the following statements hold.
\begin{itemize}
\item[(C1)] $A_n$ converges locally uniformly to $\mathcal{A}$ on $\mathbb{R}\times\mathbb{N}$.
\item[(C2)] For all $(x,y)\in [0,\infty)\times \mathbb{R}$ with $(x,y)\in\mathbb{Q}^2$, $S_n(x,y)$ converges to a limit, denoted by $\tilde{\cS}_{x,y}$.
\item[(C3)] $S_n(\cdot,0)$ converges in $\mathbb{D}([0,\infty))$ to a limit, denoted by $\mathcal{R}(\cdot)$.
\item[(C4)] For each $x\in\mathbb{Q}_+$, $T_n^x$ converges in $\uc(\mathbb{R}\times\mathbb{N})$ to a limit, denoted by $T^x$.
\end{itemize}
We will show that under this coupling, almost surely $S_n$ converges locally uniformly to $\cS$ on $[0,\infty)\times\mathbb{R}$. In view of Lemma \ref{l:quadrangle}, it suffices to show that each of the following statements hold almost surely:
\begin{enumerate}
\item[(S1)] $S_n(0,\cdot)$ converges to $\cS(0,\cdot)$ locally uniformly on $\mathbb{R}.$
\item[(S2)] $S_n(\cdot,0)$ converges to $\cS(\cdot,0)$ locally uniformly on $[0,\infty)$.
\item[(S3)] For all $(x,y)\in \mathbb{Q}_+\times \mathbb{Q}$ , $S_n(x,y)$ converges to $\cS(x,y)$.
\end{enumerate}
\noindent{\bf Proof of (S1).}

Since $S_n(0,y)=A_n(y,1)$ and $\cS(0,y)=\mathcal{A}(y,1)$, (S1) follows from (C1).

\noindent{\bf Proof of (S3).}

Now we turn to proving (S3). In view of (C2), it suffices to show that 
\begin{equation}\label{equ:action-rep_0}
\tilde{\cS}_{x,y}=\cS(x,y)\ \textup{for all}\ (x,y)\in\mathbb{Q}_+\times \mathbb{Q}
\end{equation}
holds almost surely. Suppose 
\begin{equation}\label{equ:action-rep_0.5}
T^x(y,1)=\tilde{\cS}_{x,y}\ \textup{for all}\ (x,y)\in\mathbb{Q}_+\times \mathbb{Q}
\end{equation}
holds almost surely. Then for each $x\in\mathbb{Q}_+$, $T^x$ is an $\mathcal{A}$-action (from (C1), (C4), Proposition~\ref{p:lp-to-lp} or Proposition~\ref{p:polymer_to_lp}) with $T^x(y,1)+(y-x)^2\ed\textup{TW}$ (from (A2), (C2) and \eqref{equ:action-rep_0.5}). From Corollary~\ref{c:main2}, almost surely $T^x(y,1)=\cS(x,y)$. Together with \eqref{equ:action-rep_0.5}, this yields \eqref{equ:action-rep_0}.  

In order to prove \eqref{equ:action-rep_0.5}, we claim that almost surely for all $(x,y)\in\mathbb{Q}_+\times \mathbb{Q}$, 
\begin{equation}\label{equ:action-rep_1}
\lim_{y'\in\mathbb{Q}, y'\downarrow y} \tilde{\cS}_{x,y'}-\mathcal{A}(y',1)=\tilde{\cS}_{x,y}-\mathcal{A}(y,1).
\end{equation}
Assume for a moment \eqref{equ:action-rep_1} holds true. We show that (C1), (C2), (C4), and \eqref{equ:action-rep_1} imply \eqref{equ:action-rep_0.5}. Fix arbitrary $x\in\mathbb{Q}_+$. From the quadrangle inequality \eqref{equ:quadrangle_action}, $T_n^x(\cdot,1)-A_n(\cdot,1)$ is monotone nondecreasing. From (C1) and (C4), $T_n^x(\cdot,1)-A_n(\cdot,1)$ converges to $T^x(\cdot,1)-\mathcal{A}(\cdot,1)$ in $\uc(\mathbb{R})$. From (C1) and (C2), for all $y\in\mathbb{Q}$, $T_n^x(y,1)-A_n(y,1)$ converges to $\tilde{\cS}_{x,y}-\mathcal{A}(y,1)$. Together with \eqref{equ:action-rep_1}, we may apply Lemma \ref{lem:UC_no_jump} to conclude \eqref{equ:action-rep_0.5}.

Lastly, we prove \eqref{equ:action-rep_1}. Since $\mathbb{Q}_+\times \mathbb{Q}$ is countable, it suffices to show that \eqref{equ:action-rep_1} holds almost surely for a fixed $(x_0,y_0)\in\mathbb{Q}_+\times \mathbb{Q}$. From the quadrangle inequality, we have almost surely $\tilde{\cS}(x_0,y)-\mathcal{A}(y,1)$ is nondecreasing in $y$. Therefore, it suffices to show
\begin{align}\label{equ:action-rep_2}
\mathbb{E}\left[\lim_{y\in\mathbb{Q}, y\downarrow y_0} \tilde{\cS}_{x_0,y}-\mathcal{A}(y,1)\right]=\mathbb{E}[\tilde{\cS}_{x_0,y_0}-\mathcal{A}(y_0,1)].
\end{align}
Since $\tilde{\cS}_{x,y}+(x-y)^2$ and $\mathcal{A}(y,1)+y^2$ have the same distribution as the GUE Tracy-Widom distribution, we have from the monotone convergence theorem
\begin{align*}
\mathbb{E}\left[\lim_{y\in\mathbb{Q}, y\downarrow y_0} \tilde{\cS}_{x_0,y}-\mathcal{A}(y,1)\right]=&\lim_{y\in\mathbb{Q}, y\downarrow y_0}\mathbb{E}\left[ \tilde{\cS}_{x_0,y}-\mathcal{A}(y,1)\right]=-x_0^2+2x_0y_0.
\end{align*}
It yields \eqref{equ:action-rep_2} and completes the derivation of \eqref{equ:action-rep_1}. The proof for (S3) is complete.
 
\noindent{\bf Proof of (S2).} 
 
We show that (S1), (S3) and (C3) imply (S2). In view of (C3), it suffices to show $\mathcal{R}(x)=\mathcal{S}(x,0)$ for all $x\in [0,\infty)$. From (S1) and (S3), $S_n(x,0)$ converges to $\cS(x,0)$ for all $x\in [0,\infty)\cap \mathbb{Q}$. Together with (C3), we can apply Lemma~\ref{lem:Skorokhod} to conclude that $\mathcal{R}(x)=\mathcal{S}(x,0)$ for all $x\in [0,\infty)$. This finishes the derivation of (S2). The proof of the proposition is complete.
\end{proof}

\begin{remark}
The quadrangle inequality \eqref{equ:quadrangle_action} and the assumption (A3) are only used for joint compact convergence. Without the quadrangle inequality and assuming only (A1) and (A2) in Proposition~\ref{p:action-rep}, we would get a coupling so that $S_n(x, \cdot)\to \cS (x,\cdot)$ locally uniformly for all  $x\in \mathbb [0,\infty)\cap\mathbb{Q}$.
\end{remark}
\section{Applications}\label{sec:application}

In this section, we apply the results in Section~\ref{sec:Airyaction} to prove the convergence to Airy sheet for Brownian last passage percolation, O'Connell-Yor semi-discrete polymer, log-gamma directed polymer, and the KPZ equation. We begin with a simple lemma about change of variables for action representations. The proof is omitted.

\begin{lemma}\label{lem:action_representation_COV}
Let $Z(x,y,k)$ be a $w$-polymer action representation of size $(\alpha,j)$. Fix any positive numbers $a_1,a_2>0$, and real numbers $a_3,a_4, \{a_{5,k}\}_{k\in \mathbb{N}}$ . Then $\bar{Z}(x,y,k)$ defined by
\begin{align*}
\bar{Z}(x,y,k)=\left\{\begin{array}{cc}
a_1Z(0,a_2y+a_3,k)+a_4y+a_{5,k}+(k-1)a_1\log_w a_2, & x=0,\\
a_1Z(a_2x,a_2y+a_3,k)+a_4(y-x)+a_{5,1}+(k-1)a_1\log_w a_2, & x>0.
\end{array}\right.
\end{align*}
is a $w^{\frac{1}{a_1}}$-polymer action representation of size $(a_2^{-1}(\alpha+a_3),j).$ If $Z(x,y,k)$ be an action representation of size $(\alpha,j)$. Then 
\begin{align*}
Z'(x,y,k)= \left\{\begin{array}{cc}
a_1Z(0,a_2y+a_3,k)+a_4y+a_{5,k}, & x=0,\\
a_1Z(a_2x,a_2y+a_3,k)+a_4(y-x)+a_{5,1}, & x>0.
\end{array}\right.
\end{align*}
is an action representation of size $(a_2^{-1}(\alpha+a_3),j).$
\end{lemma}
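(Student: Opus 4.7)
The plan is to verify the four defining properties (Z1)--(Z4), or their polymer analogues (Z1), (Z2'), (Z3), (Z4), for $\bar Z$ and $Z'$ by direct substitution. All four checks are algebraic identities in the constants $a_1, a_2, a_3, a_4, \{a_{5,k}\}$; only one computation --- the polymer recursion (Z2') --- is delicate, because the correction term $(k-1) a_1 \log_w a_2$ must be calibrated to absorb the Jacobian arising from the substitution $z \mapsto a_2 z + a_3$.

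First I would identify the new domain: after substituting $(X,Y) = (a_2 x, a_2 y + a_3)$, the defining inequality $Y \ge X - \alpha$ becomes $y \ge x - a_2^{-1}(\alpha + a_3)$, which gives the claimed size $(a_2^{-1}(\alpha+a_3), j)$. Property (Z1) is immediate: the affine reparametrization $y \mapsto a_2 y + a_3$ (with $a_2 > 0$) preserves cadlag paths, and the added continuous term $a_4 y + a_{5,k} + (k-1) a_1 \log_w a_2$ does not affect this property.

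The heart of the argument is (Z2'). Fix $x>0$ and let $T = Z(a_2 x, \cdot, \cdot)$, $A = Z(0, \cdot, \cdot)$, $\bar T = \bar Z(x, \cdot, \cdot)$, $\bar A = \bar Z(0, \cdot, \cdot)$, and set $\tilde w = w^{1/a_1}$. For $x' < y$ in the new coordinates and $X = a_2 x' + a_3$, $Y = a_2 y + a_3$, $Z' = a_2 z + a_3$, a direct expansion (using the cancellation of the level-dependent $a_{5,k}$ in differences of $\bar A$) gives
\begin{align*}
\bar T(x',k) + \bar A(y,k) - \bar A(x',k) &= a_1[T(X,k) + A(Y,k) - A(X,k)] + C_k, \\
\bar T(z,k+1) + \bar A(y,k) - \bar A(z,k) &= a_1[T(Z',k+1) + A(Y,k) - A(Z',k)] + C_{k+1},
\end{align*}
where $C_k = a_4(y-x) + a_{5,1} + (k-1) a_1 \log_w a_2$. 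Setting $\Lambda = w^{(a_4/a_1)(y-x) + a_{5,1}/a_1} \cdot a_2^{k-1}$, this yields $\tilde w^{\bar T(x',k)+\bar A(y,k)-\bar A(x',k)} = \Lambda \cdot w^{T(X,k)+A(Y,k)-A(X,k)}$ and $\tilde w^{\bar T(z,k+1)+\bar A(y,k)-\bar A(z,k)} = \Lambda \cdot a_2 \cdot w^{T(Z',k+1)+A(Y,k)-A(Z',k)}$. The extra factor $a_2$ is precisely cancelled by the Jacobian of $z \mapsto Z'$:
$$\int_{x'}^y \tilde w^{\bar T(z,k+1)+\bar A(y,k)-\bar A(z,k)}\, dz = \Lambda \int_X^Y w^{T(Z',k+1)+A(Y,k)-A(Z',k)}\, dZ'.$$
Summing and invoking the $w$-polymer action recursion for $(T,A)$ gives $\Lambda \cdot w^{T(Y,k)} = \tilde w^{\bar T(y,k)}$, establishing (Z2'). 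Upper semicontinuity of $\bar T$ is preserved because the transformation is continuous.

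The non-polymer case of (Z2) is strictly simpler: since $\vee$ and $\sup$ commute with monotone affine rescalings, multiplying the action recursion for $T$ by $a_1$ and adding $a_4(y-x) + a_{5,1}$ immediately yields the action recursion for $Z'(x,\cdot,\cdot)$ with respect to $Z'(0,\cdot,\cdot)$; no logarithmic correction is needed. Property (Z4) follows from the vanishing discrete mixed difference $a_4[(y_1-x_1)+(y_2-x_2)-(y_1-x_2)-(y_2-x_1)] = 0$ and pairwise cancellation of the $a_{5,1}$ constants, leaving $a_1$ times the nonnegative quadrangle expression for $Z$. For (Z3), finiteness is inherited directly; cadlag of $\bar Z(\cdot, 0, 1)$ in $x$ reduces to cadlag of $Z(a_2\,\cdot, a_3, 1)$ in its first argument, which follows by combining (Z3) for $Z$ with the monotonicity of $Z(\cdot, a_3, 1) - Z(\cdot, 0, 1)$ implied by (Z4). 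The main obstacle is the bookkeeping in (Z2'): the correction term $(k-1) a_1 \log_w a_2$ is there precisely to absorb one factor of $a_2$ per level transition, matching the Jacobian of the $y$-rescaling. Once this identity is verified, verification of the remaining properties is routine.
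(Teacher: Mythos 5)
The paper omits this proof, treating the lemma as a routine change of variables, so there is no written argument to compare against. Your verification of (Z1), (Z2')/(Z2), and (Z4) is correct and spells out exactly the intended computation: the level-dependent shift $(k-1)a_1\log_w a_2$ is what absorbs the Jacobian $a_2$ per level transition, as you show by pulling out the common factor $\Lambda$ and substituting $z\mapsto a_2 z + a_3$ in the integral, after which the $w$-polymer recursion for $(T,A)$ reappears. The domain bookkeeping giving size $a_2^{-1}(\alpha+a_3)$ and the cancellation in the quadrangle inequality are also right.

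Your argument for the cadlag part of (Z3) has a gap, however. You reduce cadlag-ness of $\bar Z(\cdot,0,1)$ in $x$ to that of $x\mapsto Z(a_2 x,a_3,1)$ and deduce the latter from (Z3) for $Z$ combined with the (Z4)-implied monotonicity of $x\mapsto Z(x,a_3,1)-Z(x,0,1)$. But a nondecreasing function need not be right-continuous, so a cadlag function plus a nondecreasing function need not be cadlag. The quadrangle inequality controls only second differences, not one-sided continuity in $x$ at a shifted level $y=a_3\neq 0$, and nothing in (Z1)--(Z4) excludes the monotone increment having a left-continuous jump at a point where $Z(\cdot,0,1)$ is continuous; that would break (Z3) for $\bar Z$. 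The issue is invisible in the paper's applications, where every $Z$ arises from last-passage or free-energy functions jointly continuous in $(x,y)$, so $Z(a_2\cdot,a_3,1)$ is automatically continuous in $x$. For the lemma to hold verbatim for an arbitrary abstract action representation, you should either strengthen (Z3) to require $Z(\cdot,y,1)$ cadlag for every fixed $y$ (which holds in every use of the lemma), or supply a separate argument for right-continuity of the monotone increment $x\mapsto Z(x,a_3,1)-Z(x,0,1)$.
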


\subsection{Brownian last passage percolation} 
Let $B:\mathbb{R}\times\mathbb{N}\to\mathbb{R}$ be a random function such that $B(\cdot,k), k\in\mathbb{N}$ are independent two sided Brownian motions with diffusion constant one. For $n\in\mathbb{N}$, $x,y\in\mathbb{R}$ with $y\geq -2^{-1}n^{1/3}+x$, define
\begin{align*}
\cS^{\textup{BL}\ar\textup{FP}}_n(x,y):=n^{1/6}B((2n^{-1/3}x,n)\ar (1+2n^{-1/3}y,1))-2n^{1/3}(y-x)-2n^{2/3}.
\end{align*} 

The next theorem shows that $\cS^{\textup{BL}\to\textup{FP}}_n$ converges to the Airy sheet $\cS$, a result originally proved in \cite{DOV}.

\begin{theorem}\label{t:BLPP_to_FP}
As $C(\mathbb{R}\times\mathbb{R},\mathbb{R})$-valued random variables, $\cS^{\textup{BL}\ar\textup{FP}}_n$ converges in distribution to the Airy sheet $\cS$.
\end{theorem}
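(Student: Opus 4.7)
The plan is to apply Theorem~\ref{thm:action-rep-sym} with $\alpha_n = n^{1/3}/2$ and $j_n = n + 1$. The task is to realize $\cS_n^{\textup{BL}\ar\textup{FP}}$ as the top line $Z_n(\cdot, \cdot, 1)$ of an action representation $Z_n$ whose environment $Z_n(0, \cdot, \cdot)$ is the rescaled Brownian melon. Let $W^n = (W^n_1, \ldots, W^n_n)$ denote the Brownian melon of $B_1, \ldots, B_n$ on $[0, \infty)$, and define the scaled environment
\[
A_n(y, k) := n^{1/6} W^n_k\bigl(1 + 2n^{-1/3} y\bigr) - 2n^{1/3} y - 2n^{2/3},\qquad (y,k)\in[-\alpha_n,\infty)\times\{1,\ldots,n\}.
\]
Set $Z_n(0, y, k) := A_n(y, k)$ and, for $x > 0$, $Z_n(x, y, k) := A_n((x, n) \ar (y, k))$, the last passage value in the scaled melon environment. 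Via the RSK identity for Brownian last passage, LPP in the melon environment coincides with BLPP in the raw Brownian environment, so $Z_n(x, y, 1) = \cS_n^{\textup{BL}\ar\textup{FP}}(x, y)$.

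Next I would verify the action representation properties (Z1)--(Z4). The action recursion for $Z_n(x, \cdot, \cdot)$ with $x>0$ is Lemma~\ref{lem:LPP_as_almostAction}; continuity (hence upper semicontinuity and the cadlag property) follows from continuity of LPP in a continuous environment; and the quadrangle inequality (Z4) is Lemma~\ref{lem:quadrangle_LP}. The symmetry conditions follow from classical invariances of Brownian motion. For (Sym1), translation invariance gives
\[
B\bigl((2n^{-1/3}x, n) \ar (1 + 2n^{-1/3}y, 1)\bigr) \ed B\bigl((0, n) \ar (1 + 2n^{-1/3}(y-x), 1)\bigr),
\]
and subtracting the deterministic drift $2n^{1/3}(y-x) + 2n^{2/3}$ yields $Z_n(x, y, 1) \ed Z_n(0, y-x, 1)$. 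For (Sym2), the reflection $t \mapsto 1 - t$ preserves the joint law of $(B_k)$ on $[0,1]$, producing the required reversal-in-$x$ identity after centering.

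Condition (A1)---convergence of $A_n$ to the parabolic Airy line ensemble $\mathcal{A}$ in $\mathbb{D}(\mathbb{R}\times\mathbb{N})$---is the classical Brownian-melon-to-Airy-line-ensemble convergence, established in \cite{DOV}. Since $\mathcal{A}$ is continuous, Skorokhod convergence reduces to local uniform convergence. Theorem~\ref{thm:action-rep-sym} then produces a coupling under which $Z_n(\cdot, \cdot, 1) = \cS_n^{\textup{BL}\ar\textup{FP}}$ converges locally uniformly to $\cS$, which implies the asserted convergence in distribution. The whole point of the action framework is that this argument is a plug-in once the ALE convergence and the two symmetries are in hand; there is no serious obstacle. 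The only step requiring genuine care is the RSK bookkeeping that identifies the raw-Brownian BLPP value $\cS_n^{\textup{BL}\ar\textup{FP}}(x,y)$ with the melon-environment LPP value $A_n((x,n)\ar(y,1))$ for arbitrary $x\ge 0$ (not just $x=0$, where the identity reduces to $B((0,n)\ar(\cdot,1)) = W^n_1$), but this is a standard consequence of the line-ensemble form of Greene's theorem.
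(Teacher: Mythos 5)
Your overall strategy is the same as the paper's: construct an action representation whose top line is $\cS^{\textup{BL}\ar\textup{FP}}_n$ and whose environment is the rescaled Brownian melon, then invoke Theorem~\ref{thm:action-rep-sym} using the melon-to-Airy-line-ensemble convergence. The symmetry and (A1) parts of your argument are also consistent with the paper. However, there is a genuine bookkeeping error in the definition of $Z_n(x,y,k)$ for $x>0$ that breaks the key identity $Z_n(x,y,1)=\cS^{\textup{BL}\ar\textup{FP}}_n(x,y)$.

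You define the scaled environment $A_n(y,k)=n^{1/6}W^n_k(1+2n^{-1/3}y)-2n^{1/3}y-2n^{2/3}$ and then set $Z_n(x,y,k)=A_n((x,n)\ar(y,k))$. Because the constant and linear-in-$y$ corrections telescope along any directed path, this LPP value equals
\[
n^{1/6}\,W^n\bigl((1+2n^{-1/3}x,\,n)\ar(1+2n^{-1/3}y,\,1)\bigr)-2n^{1/3}(y-x),
\]
i.e., LPP in the unscaled melon from the point $(1+2n^{-1/3}x,\,n)$, not from $(2n^{-1/3}x,\,n)$ as $\cS^{\textup{BL}\ar\textup{FP}}_n$ requires; moreover the additive $-2n^{2/3}$ has dropped out. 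The root cause is that the target rescaling $y\mapsto 1+2n^{-1/3}y$ and the source rescaling $x\mapsto 2n^{-1/3}x$ are \emph{not} the same affine map, while your construction forces them to be, since both $x$ and $y$ index the same rescaled environment $A_n$. Concretely this means (Sym1) fails for your $Z_n$ and $Z_n(x,\cdot,1)$ does not converge to the right limit (it is even discontinuous in law at $x=0^+$). The paper avoids this by first defining $Z_n^{\textup{BL}}(x,y,k)$ in raw, unscaled melon coordinates (so RSK gives $Z_n^{\textup{BL}}(x,y,1)=B((x,n)\ar(y,1))$ cleanly), and only then applying Lemma~\ref{lem:action_representation_COV}, whose entire purpose is to push an asymmetric affine change of variables through an action representation. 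You should reintroduce that two-step structure: define the representation in raw coordinates, then transform via Lemma~\ref{lem:action_representation_COV}, rather than building the rescaling into the environment before taking last passage values.
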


\begin{proof}
It suffices to show that for each $r\geq 0$, there exists a coupling of $\cS^{\textup{BL}\ar\textup{FP}}_n|_{[-r,\infty)\times\mathbb{R}}$ and $\cS|_{[-r,\infty)\times\mathbb{R}}$ such that almost surely $\cS^{\textup{BL}\ar\textup{FP}}_n|_{[-r,\infty)\times\mathbb{R}}$ converges to $\cS|_{[-r,\infty)\times\mathbb{R}}$ locally uniformly. From the translation symmetry of $\cS^{\textup{BL}\ar\textup{FP}}_n$ and $\cS$, it suffices to construct the coupling for the case $r=0$.

Let $(WB)_n:[0,\infty)\times \{1,2,\dots, n\}$ be the Brownian $n$-melon constructed from the Brownian motions $B|_{[0,\infty)\times \{1,2,\dots, n\}}$ \cite{DOV} such that
\begin{align*}
(WB)_n(y,1)=B((0,n)\ar (y,1))\ \textup{for all}\ y\geq 0.
\end{align*}
Define $Z_n^{\textup{BL}}(x,y,k)$ for $y\geq x\geq 0$ and $1\leq k\leq n$ by 
\begin{align*}
Z_n^{\textup{BL}}(x,y,k)=\left\{
\begin{array}{cc}
(WB)_n(y,k), & x=0,\\
(WB)_n((x,n)\ar (y,k)), & x>0. 
\end{array}
\right.
\end{align*}
From the RSK isometry \cite{biane2005littelmann,DOV}, we have for $y\geq x\geq 0$,
\begin{align}\label{equ:BL_to_FP_1}
Z_n^{\textup{BL}}(x,y,1)=B((x,n)\ar (y,1)).
\end{align}
From Lemma~\ref{lem:quadrangle_LP}, $Z_n^{\textup{BL}}(\cdot,\cdot,1)$ satisfies the quadrangle inequality. Therefore, $Z_n^{\textup{BL}}$ is an action representation. Define
\begin{align*}
Z_n^{\textup{BL}\ar\textup{FP}}(x,y,k)=n^{1/6} Z_n^{\textup{BL}}(2n^{-1/3}x,1+2n^{-1/3}y,k)-2n^{1/3}(y-x)-2n^{2/3}.
\end{align*}
From Lemma~\ref{lem:action_representation_COV}, $Z_n^{\textup{BL}\ar\textup{FP}}$ is an action representation.  From \eqref{equ:BL_to_FP_1},
\begin{align*}
Z_n^{\textup{BL}\ar\textup{FP}}(x,y,1)=\cS^{\textup{BL}\ar\textup{FP}}_n(x,y).
\end{align*}

Now we apply Theorem~\ref{thm:action-rep-sym}. The symmetry assumptions (Sym1) and (Sym2) follow from \eqref{equ:BL_to_FP_1}. From \cite{CH}, $Z_n^{\textup{BL}\ar\textup{FP}}(0,\cdot,\cdot)$ converges in distribution to the parabolic Airy line ensemble. This verifies the assumption (A1). Then the desired coupling is constructed from Theorem~\ref{thm:action-rep-sym}. The proof is complete. 
\end{proof}

\subsection{O'Connell-Yor polymers}
We begin with defining the polymer free energy in an environment. Let $A$ be an environment on $M=I\times J$. Recall that for any directed path $\gamma$, the length of $\gamma$ with respect to $A$, $L(\gamma,A)$, is defined in \eqref{equ:path_length}. For any $q_1\prec q_2$, the polymer free energy from $q_1$ to $q_2$ is defined by

\begin{align}\label{def:polymer_free_energy}
A(q_1\Rightarrow q_2)=\log\int_{\mathcal{P}(q_1\ar q_2)} e^{L(\gamma,A)}\, d\gamma.
\end{align}
The O'Connell-Yor sheet, $\cS_n^{\textup{OY}}(x,y)$, is the polymer free energy in a Brownian environment.
\begin{align*}
\cS_n^{\textup{OY}}(x,y)=B((x,n)\Rightarrow (y,1)).
\end{align*} 

Next, we introduce the $1:2:3$ scaling of the O'Connell-Yor sheet. Let $\Psi(z)=\Gamma'(z)/\Gamma(z)$ be the digamma function. For $\kappa>0$, define
\begin{align*}
\theta(\kappa)=(\Psi')^{-1}(\kappa),\ f(\kappa)=\theta(\kappa)\Psi'(\theta(\kappa))-\Psi(\theta(\kappa)), c(\kappa)=(-2^{-1}\Psi''(\theta(\kappa)))^{1/3}.
\end{align*}
Set
\begin{align*}
a_1=c(\kappa)^{-1},\ a_2=2c(\kappa)^2,\ a_3=\kappa,\ a_4=-2\theta(\kappa)c(\kappa), a_5=-c(\kappa)^{-1}f(\kappa),
\end{align*}
and define
\begin{align*}
\cS^{\textup{OY}\ar\textup{FP}}_n(x,y):=a_1 n^{-1/3}B((a_2n^{2/3}x,n)\Rightarrow (a_2 n^{2/3}y+a_3n,1))+a_4n^{1/3}(y-x)+a_5n^{2/3}.
\end{align*} 

The following theorem uses the upcoming tightness result  \cite{wu2025interior}.

\begin{theorem}\label{t:OY_to_FP}
As $C(\mathbb{R}\times\mathbb{R},\mathbb{R})$-valued random variables, $\cS^{\textup{OY}\ar\textup{FP}}_n$ converges in distribution to the Airy sheet $\cS$.
\end{theorem}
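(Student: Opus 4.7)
The plan is to mirror the proof of Theorem~\ref{t:BLPP_to_FP}, replacing Brownian last passage with the O'Connell-Yor polymer free energy and the action representation with an $e$-polymer action representation (with $w=e$, so that the recursion \eqref{e:polymer-mc} reduces to the integral recursion satisfied by polymer partition functions). A rescaling via Lemma~\ref{lem:action_representation_COV} then converts the $e$-polymer representation into a $w_n$-polymer representation with $w_n\uparrow\infty$, after which Theorem~\ref{thm:action-rep-sym} applies verbatim.

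First I would invoke O'Connell's geometric RSK correspondence for the semi-discrete polymer to build the O'Connell-Yor line ensemble $W_n^{\textup{OY}}:[0,\infty)\times\{1,\dots,n\}\to\mathbb{R}$ from $B|_{[0,\infty)\times\{1,\dots,n\}}$. This line ensemble has two essential features: (i) its top line satisfies $W_n^{\textup{OY}}(y,1)=B((0,n)\Rightarrow(y,1))$, and (ii) the multi-level polymer free energies in the environment $W_n^{\textup{OY}}$ agree with the original polymer free energy at the top level. Set
\begin{align*}
Z_n^{\textup{OY}}(x,y,k)=\begin{cases} W_n^{\textup{OY}}(y,k), & x=0,\\ W_n^{\textup{OY}}\bigl((x,n)\Rightarrow(y,k)\bigr), & x>0.\end{cases}
\end{align*}
By (ii) we have $Z_n^{\textup{OY}}(x,y,1)=B((x,n)\Rightarrow(y,1))$, and by (i) and the integral recursion for polymer free energies, $Z_n^{\textup{OY}}(x,\cdot,\cdot)$ is an $e$-polymer $Z_n^{\textup{OY}}(0,\cdot,\cdot)$-action. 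The quadrangle inequality (Z4) for polymer free energies is the standard FKG-type monotonicity of $B((x,n)\Rightarrow(y,1))-B((x',n)\Rightarrow(y,1))$ in $y$ for $x<x'$. These together show $Z_n^{\textup{OY}}$ is an $e$-polymer action representation of size $(\infty,n+1)$.

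Applying Lemma~\ref{lem:action_representation_COV} with the scalings prescribed in the definition of $\cS_n^{\textup{OY}\ar\textup{FP}}$, in particular the vertical prefactor $a_1\mapsto c(\kappa)^{-1}n^{-1/3}$, produces a $w_n$-polymer action representation $\bar Z_n^{\textup{OY}\ar\textup{FP}}$ with $w_n=e^{c(\kappa)n^{1/3}}\uparrow\infty$ and with top line equal to $\cS_n^{\textup{OY}\ar\textup{FP}}$. The symmetry assumptions (Sym1) and (Sym2) follow from translation invariance of $B$ and from the well-known time-reversal symmetry of the O'Connell-Yor polymer under $(z,k)\mapsto(y-z,n+1-k)$; these ensure that $B((x,n)\Rightarrow(y,1))$ depends only on $y-x$ and that the process in $x$ from $(x,0)$ matches the process in $-x$ from $(0,-x)$ in distribution. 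Assumption (A1)—convergence of the rescaled $W_n^{\textup{OY}}(0,\cdot,\cdot)$ in $\mathbb{D}(\mathbb{R}\times\mathbb{N})$ to the parabolic Airy line ensemble—is provided by the forthcoming tightness result of \cite{wu2025interior} combined with the Gibbs-uniqueness characterization of \cite{aggarwal2025strong}. Theorem~\ref{thm:action-rep-sym} then yields a coupling under which $\cS_n^{\textup{OY}\ar\textup{FP}}$ converges locally uniformly to $\cS$ almost surely, from which the claimed distributional convergence follows.

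The main obstacle lies in Step~1, the identification of the O'Connell-Yor polymer free energy with the top line of an $e$-polymer action representation. Specifically, verifying that the multi-level polymer free energies in the line-ensemble environment agree at level~$1$ with the original polymer free energies, and that the resulting function satisfies the $e$-polymer action recursion with the right environment, is the semi-discrete polymer analogue of the Biane–Bougerol–O'Connell intertwining used tacitly in the Brownian LPP proof; this rests on O'Connell's geometric RSK. Verifying the quadrangle inequality (Z4) for polymer partition functions also requires care (though it is expected from FKG monotonicity). The convergence input (A1) at the full line-ensemble level is the deepest analytic ingredient and is being quoted from forthcoming work.
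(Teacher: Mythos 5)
Your proposal follows essentially the same route as the paper: build the O'Connell-Yor line ensemble from $B$, define $Z^{\textup{OY}}_n$ by polymer free energies into that ensemble, identify the top line with $B((x,n)\Rightarrow(y,1))$ via the geometric RSK intertwining (the paper cites \cite{noumi2002tropical,corwin2020invariance}), check (Z4) and the symmetry assumptions, rescale with Lemma~\ref{lem:action_representation_COV} to obtain an $\exp(c(\kappa)n^{1/3})$-polymer action representation, and conclude with Theorem~\ref{thm:action-rep-sym} using \cite{wu2025interior} for (A1). The only minor differences are in attribution: the paper cites \cite{wu2023kpz} for the quadrangle inequality rather than arguing from FKG, and cites \cite{wu2025interior} alone for (A1).
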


\begin{proof}
Due to the translation symmetry of both $\cS^{\textup{OY}\ar\textup{FP}}_n$ and $\cS$, it suffices to prove this convergence when both are restricted to $[0,\infty)\times \mathbb{R}$.  

Let $(TB)_n:(0,\infty)\times \{1,2,\dots, n\}$ be the O'Connell-Yor line ensemble constructed from $B|_{[0,\infty)\times \{1,2,\dots, n\}}$ \cite{o'connell2012DP} such that
\begin{align*}
(TB)_n(y,1)=B((0,n)\Rightarrow (y,1))\ \textup{for all}\ y> 0.
\end{align*}
Define
\begin{equation}\label{def:OY_action_rep}
Z^{\textup{OY}}_n(x,y,k)=\left\{ \begin{array}{cc}
(TB)_n(y,k), & x=0,\\
(TB)_n((x,n)\Rightarrow (y,k)), & x>0.
\end{array} \right.
\end{equation}
From \cite{noumi2002tropical,corwin2020invariance}, for any $y\geq x\geq 0$,
\begin{align}\label{equ:OY_to_FP_1}
Z^{\textup{OY}}_n(x,y,1)=B((x,n)\Rightarrow (y,1)).
\end{align}
From \cite{wu2023kpz}, $Z^{\textup{OY}}_n(\cdot,\cdot,1)$ satisfies the quadrangle inequality. Hence $Z^{\textup{OY}}_n$ is an $e$-polymer action representation. Define
\begin{align*}
Z^{\textup{OY}\ar \textup{FP}}_n(x,y,k)=a_1 n^{-1/3}Z^{\textup{OY}}_n(a_2 n^{2/3}x,a_2n^{2/3}y+a_3n,k)+a_4 n^{1/3}(y-x)+a_5n^{2/3}\\
+(k-1)a_1 n^{-1/3}\log(a_2 n^{2/3}).
\end{align*} 
From Lemma~\ref{lem:action_representation_COV}, $Z^{\textup{OY}\ar \textup{FP}}_n$ is an $\exp(a_1^{-1}n^{1/3})$-polymer action representation. From \eqref{equ:OY_to_FP_1},
$$
Z^{\textup{OY}\ar \textup{FP}}_n(x,y,1)=S^{\textup{OY}\ar \textup{FP}}_n(x,y).
$$

Now we apply Theorem~\ref{thm:action-rep-sym}. The symmetry assumptions (Sym1) and (Sym2) follow from \eqref{equ:OY_to_FP_1}. From \cite{wu2025interior}, $Z_n^{\textup{OY}\ar\textup{FP}}(0,\cdot,\cdot)$ converges in distribution to the parabolic Airy line ensemble. This verifies the assumption (A1). Then the desired coupling is constructed from Theorem~\ref{thm:action-rep-sym}. The proof is complete. 
\end{proof}

\subsection{Log-gamma polymers}

Given $(x_1,k_1)$, $(x_2,k_2)\in\mathbb{Z}\times\mathbb{N}$ with $(x_1,k_1)\preceq (x_2,k_2)$, we denote by $\mathcal{P}_d((x_1,k_1)\ar (x_2,k_2))$ the collection of directed discrete paths from $(x_1,k_1)$ to $(x_2,k_2)$. That is,
\begin{align*}
\mathcal{P}_d((x_1,k_1)\ar (x_2,k_2))=\left\{\gamma\cap (\mathbb{Z}\times\mathbb{N})\, :\, \gamma\in  \mathcal{P}((x_1,k_1)\ar (x_2,k_2))\right\}.
\end{align*}
Let $A$ be a discrete environment defined on a subset of $\mathbb{Z}\times\mathbb{N}$ which contains $([x_1-1,x_2]\cap\mathbb{Z}) \times [k_2,k_1]$. Given $\gamma_d\in \mathcal{P}_d((x_1,k_1)\ar (x_2,k_2))$, we define
\begin{align*}
L(\gamma_d,A)=\sum_{(x,k)\in\gamma_d} A(x,k)-A(x-1,k).
\end{align*}

\begin{align}
A((x_1,k_1)\Rightarrow_d (x_2,k_1))=\log \sum_{\gamma_d\in \mathcal{P}_d((x_1,k_1)\ar (x_2,k_2))}  e^{L(\gamma_d,A)}.
\end{align}

Recall that a continuous random variable $X$ is said to have the inverse-gamma distribution with parameter $\theta>0$ if its density is given by
\begin{align}\label{equ:log_gamma_density}
f_\theta(x)=\frac{\mathbbm{1}(x>0)}{\Gamma(\theta)}\cdot x^{-\theta-1}\cdot\exp(-x^{-1}).
\end{align}
Let $\{D(x,k)\, :\, (x,k)\in\mathbb{Z}\times\mathbb{N}\}$ be a collection of random variables such that $D(0,k)\equiv 0$ and $\{D(x,k)-D(x-1,k)\, :\, (x,k)\in\mathbb{Z}\times\mathbb{N}\}$ are i.i.d. random variables with density $f_\theta$ as in \eqref{equ:log_gamma_density}. 

Recall that $\Psi(x)$ is the digamma function. Denote $\sigma=\sqrt{\Psi'(\theta/2)}$ and let $h_\theta,\ d_\theta$ be the functions defined as in \cite[(1.8),(1.9)]{barraquand2023tightness} respectively. Set
$$a_1=d_\theta(1)^{-1},a_2=2\sigma^{-2}d_\theta(1)^2,a_3=1,a_4=-2\sigma^{-2}d_\theta(1)h'_\theta(1),a_5=d_\theta(1)^{-1}h_\theta(1).$$
For $x,y\in (a_2n^{2/3})^{-1}\mathbb{Z}$ with $y\geq x-a_2^{-1}a_3n^{1/3}+a_2^{-1}n^{-2/3}$, define
\begin{align*}
\cS^{\textup{LG}\ar\textup{FP}}_n(x,y):=a_1 n^{-1/3}D((a_2n^{2/3}x+1,n)\Rightarrow_d (a_2 n^{2/3}y+a_3n ,1))+a_4n^{1/3}(y-x)+a_5n^{2/3}.
\end{align*} 
Extend $\cS^{\textup{LG}\ar\textup{FP}}_n$ to $\overline{\cS}^{\textup{LG}\ar\textup{FP}}_n$ through linear interpolation.

The following result was originally proved in \cite{zhang2025convergence}.
 
\begin{theorem}\label{t:LG_to_FP}
As $C(\mathbb{R}\times\mathbb{R},\mathbb{R})$-valued random variables, $\overline{\cS}^{\textup{LG}\ar\textup{FP}}_n$ converges in distribution to the Airy sheet $\cS$.
\end{theorem}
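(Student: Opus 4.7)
The plan is to mirror the proof of Theorem~\ref{t:OY_to_FP} in the discrete setting, invoking Theorem~\ref{t:action-rep_discrete} in place of Theorem~\ref{thm:action-rep-sym}. By the translation symmetry of the i.i.d.\ inverse-gamma field it suffices, for each $r\ge 0$, to construct a coupling of $\overline{\cS}^{\textup{LG}\ar\textup{FP}}_n$ and $\cS$ such that the convergence is locally uniform on $[0,\infty)\times\mathbb{R}$.

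First I would form the log-gamma line ensemble $(TD)_n$ from $D|_{\mathbb{Z}\times\{1,\dots,n\}}$ via geometric (tropical) RSK, so that $(TD)_n(y,1)=D((1,n)\Rightarrow_d(y,1))$ and, more generally, $(TD)_n((x+1,n)\Rightarrow_d(y,1))=D((x+1,n)\Rightarrow_d(y,1))$ for integer $0\le x\le y$. Setting
\begin{align*}
Z^{\textup{LG}}_n(x,y,k)=\begin{cases}(TD)_n(y,k),& x=0,\\ (TD)_n\bigl((x+1,n)\Rightarrow_d(y,k)\bigr),& x\ge 1,\end{cases}
\end{align*}
I would check that $Z^{\textup{LG}}_n$ is a $1$-discrete, $e$-polymer action representation of size $(0,n+1)$: the polymer recursion \eqref{equ:Local_metric_composition_law_discrete_polyer0} is built into the definition of point-to-point free energies, while the quadrangle inequality \eqref{equ:quadrangle_actiond} for log-gamma free energies follows from the same log-concavity argument used in \cite{wu2023kpz}.

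Then applying the straightforward discrete analog of Lemma~\ref{lem:action_representation_COV} with the parameters $a_1,\dots,a_5$, $a_2n^{2/3}$ and $a_3n$ prescribed in the statement, I obtain a $\delta_n$-discrete $w_n$-polymer action representation $Z^{\textup{LG}\ar\textup{FP}}_n$ with $\delta_n=(a_2n^{2/3})^{-1}$ and $w_n=\exp(a_1^{-1}n^{1/3})$, whose linear interpolation on the top line agrees with $\overline{\cS}^{\textup{LG}\ar\textup{FP}}_n$. These parameters satisfy $\delta_n\downarrow 0$, $w_n\uparrow\infty$, and
\begin{align*}
\log_{w_n}\delta_n=-a_1n^{-1/3}\log(a_2n^{2/3})\to 0,
\end{align*}
as required by Theorem~\ref{t:action-rep_discrete}. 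The symmetry hypotheses (Symd1) and (Symd2) follow from the RSK identification of $Z^{\textup{LG}}_n(x,y,1)$ with a genuine point-to-point free energy together with the shift-invariance of the environment.

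The one remaining and essential ingredient is hypothesis (Ad1): convergence in distribution of $\overline{Z}^{\textup{LG}\ar\textup{FP}}_n(0,\cdot,\cdot)$ to the parabolic Airy line ensemble $\mathcal{A}$. This is the main obstacle and is a deep external input; it can be obtained by combining the tightness of the top $k$ scaled log-gamma lines from \cite{barraquand2023tightness} with the Gibbs-resampling characterization of $\mathcal{A}$ from \cite{aggarwal2025strong} and the Tracy--Widom convergence of the one-point marginal of \cite{sep-palv-2009}. Once (Ad1) is in hand, Theorem~\ref{t:action-rep_discrete} immediately produces the desired coupling, and the theorem follows.
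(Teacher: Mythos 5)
Your proposal is correct and follows essentially the same route as the paper: reduce by translation symmetry, build $Z^{\textup{LG}}_n$ from the log-gamma melon via tropical RSK, verify it is a discrete polymer action representation, scale into $Z^{\textup{LG}\ar\textup{FP}}_n$ with $\delta_n=(a_2n^{2/3})^{-1}$ and $w_n=\exp(a_1^{-1}n^{1/3})$, check $\log_{w_n}\delta_n\to 0$, and invoke Theorem~\ref{t:action-rep_discrete}, with (Ad1) supplied by the log-gamma to Airy line ensemble convergence (the paper cites this directly as \cite[Corollary~25.2]{aggarwal2025strong}, while you sketch the combination of tightness and the Gibbs characterization behind it). The only thing your sketch glosses over is the boundary bookkeeping in the melon -- the paper defines $Z^{\textup{LG}}_n(x,y,k)=\tilde D_n((x+1,\,n\wedge(x+1))\Rightarrow_d(y,k))$ for $x\ge1$ with $(x+1)\ge k$ and sets the value to $-\infty$ when $(x+1)<k$ -- but that is a detail of implementation, not a gap in the argument.
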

\begin{proof}
Due to the translation symmetry of both $\cS^{\textup{LG}\ar\textup{FP}}_n$ and $\cS$, it suffices to prove this convergence when both are restricted to $[0,\infty)\times \mathbb{R}$.

For $n\in\mathbb{N}$, let $\{\tilde{D}_n(x,k)\, :\, 1\leq k\leq n\wedge x\}$ be the log-gamma line ensemble constructed from $D(x,k),\ x\in \mathbb{N}, 1\leq k\leq n$ \cite{corwin2014tropical} such that
\begin{align*}
\tilde{D}_n(y,1)=D((1,n)\Rightarrow_d (y,1))\ \textup{for all}\ y\geq 1.
\end{align*} 
We extend $\tilde{D}_n$ to the domain $(x,k)\in  (\mathbb{N}\cup\{0\})\times \{1,2,\dots, n\} $ by setting $\tilde{D}_n(x,k)=0$ if $0\leq x\leq k-1$. Define $Z^{\textup{LG}}_n(x,y,k)$ on $0\leq x\leq y-1\in\mathbb{Z}$ and $1\leq k\leq n$ by 
\begin{align*}
Z^{\textup{LG}}_n(x,y,k)=\left\{
\begin{array}{cc}
\tilde{D}_n(y,k), & x=0,\\
\tilde{D}((x+1,n\wedge (x+1))\Rightarrow_d (y,k)), & x\geq 1,   (x+1)\geq k,\\
-\infty, & x\geq 1, (x+1)<k.
\end{array}
\right.
\end{align*}
From \cite{noumi2002tropical,corwin2020invariance}, for any $0\leq x\leq y-1$,
\begin{align}\label{equ:LG_to_FP_1}
Z^{\textup{LG}}_n(x,y,1)=D((x+1,n)\Rightarrow_d (y,1)).
\end{align}
Define
\begin{align*}
Z^{\textup{LG}\ar \textup{FP}}_n(x,y,k)= a_1 n^{-1/3}Z^{\textup{LG}}_n(a_2 n^{2/3}x,a_2n^{2/3}y+a_3n,k)+a_4 n^{1/3}(y-x)+a_5n^{2/3}. 
\end{align*}
It is direct to check that $Z^{\textup{LG}\ar \textup{FP}}_n$ is a $(a_2n^{2/3})^{-1}$-discrete, $\exp(a_1^{-1}n^{1/3})$-polymer action representation. From \eqref{equ:LG_to_FP_1},
$$
Z^{\textup{LG}\ar \textup{FP}}_n(x,y,1)=S^{\textup{LG}\ar \textup{FP}}_n(x,y).
$$

Now we apply Theorem~\ref{t:action-rep_discrete}. The symmetry assumptions (Symd1) and (Symd2) follow from \eqref{equ:LG_to_FP_1}. From \cite[Corollary 25.2]{aggarwal2025strong}, $Z_n^{\textup{LG}\ar\textup{FP}}(0,\cdot,\cdot)$ converges in distribution to the parabolic Airy line ensemble. This verifies the assumption (Ad1). Then the desired coupling is constructed from Theorem~\ref{t:action-rep_discrete}. The proof is complete. 
\end{proof}
\subsection{The KPZ equation}

For $t>0$, let $\mathcal{H}_t:\mathbb{R}\times\mathbb{N}\to\mathbb{R}$ be the $\textup{KPZ}_t$ line ensemble and $\mathcal{S}^{\textup{KPZ}}_t:\mathbb{R}\times\mathbb{R}\to\mathbb{R}$ be the KPZ sheet. In the next theorem, we represent the KPZ sheet as a polymer action in the KPZ line ensemble.  

\begin{theorem}\label{thm:KPZ_action}
 Fix $t>0$. There exists a random function $Z^{\textup{KPZ}}_t$ such that 
 \begin{itemize}
 \item $Z^{\textup{KPZ}}_t(0,\cdot,\cdot)$ agrees with the $\textup{KPZ}_t$ line ensemble $\mathcal{H}_t(\cdot,\cdot)$.
 \item $Z^{\textup{KPZ}}_t$ is an $e$-polymer representation.
 \item Almost surely $Z^{\textup{KPZ}}_t(x,y,1)=\cS_t^{\textup{KPZ}}(x,y)$ for all $(x,y)\in\mathbb{R}^2$. 
 \end{itemize}
\end{theorem}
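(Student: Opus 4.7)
The plan is to obtain $Z^{\textup{KPZ}}_t$ as a scaling limit of the O'Connell--Yor polymer action representations $Z^{\textup{OY}}_n$ from \eqref{def:OY_action_rep}, under the weak-noise intermediate-disorder scaling that sends the O'Connell--Yor model to the KPZ equation at time $t$. Since $Z^{\textup{OY}}_n$ is already an $e$-polymer action representation, I would apply Lemma~\ref{lem:action_representation_COV} with $a_{1,n} = 1$, choosing the remaining parameters $a_{2,n}, a_{3,n}, a_{4,n}, a_{5,n,k}$ in accordance with the standard crossover scaling that drives the top line of $Z^{\textup{OY}}_n(0,\cdot,1)$ to the top line of the $\textup{KPZ}_t$ line ensemble. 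The rescaled object $\tilde Z_n$ remains an $e$-polymer action representation with size parameter $\tilde\alpha_n\uparrow\infty$.

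The proof then combines two well-known inputs. First, the line-ensemble convergence $\tilde Z_n(0,\cdot,\cdot) \to \mathcal{H}_t(\cdot,\cdot)$ in distribution, known for the O'Connell--Yor line ensemble by Corwin--Ghosal and Nica. Second, the intermediate-disorder convergence of scaled point-to-point partition functions: for each $(x,y)\in[0,\infty)\times\mathbb{R}$, $\tilde Z_n(x,y,1) \to \cS_t^{\textup{KPZ}}(x,y)$ in distribution (Alberts--Khanin--Quastel, Moreno Flores--Quastel--Remenik). Using Skorokhod representation, I would pass to a coupling in which the first convergence is almost sure and locally uniform (using continuity of $\mathcal{H}_t$), and the second holds almost surely for all $(x,y)$ in a countable dense subset of $[0,\infty)\times\mathbb{R}$.

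For each fixed $x > 0$, Proposition~\ref{p:polymer-to-polymer} now applies with $A_n := \tilde Z_n(0,\cdot,\cdot)$ and $T_n := \tilde Z_n(x,\cdot,\cdot)$, since $T_n(y,1) \to \cS_t^{\textup{KPZ}}(x,y) \in \mathbb{R}$ pointwise. Its conclusion produces a pointwise limit $Z^{\textup{KPZ}}_t(x,y,k) := \lim_n \tilde Z_n(x,y,k)$, which is an upper semi-continuous $e$-polymer $\mathcal{H}_t$-action. I define further $Z^{\textup{KPZ}}_t(0,\cdot,\cdot) := \mathcal{H}_t(\cdot,\cdot)$. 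The representation properties (Z1)--(Z4) follow by passing to the limit: (Z1) and (Z3) from continuity of $\mathcal{H}_t$ and $\cS_t^{\textup{KPZ}}$; (Z2) directly from Proposition~\ref{p:polymer-to-polymer}; and (Z4) from the quadrangle inequality for $Z^{\textup{OY}}_n(\cdot,\cdot,1)$ proven in \cite{wu2023kpz}, which is preserved under rescaling and pointwise limits. The identification $Z^{\textup{KPZ}}_t(x,y,1) = \cS_t^{\textup{KPZ}}(x,y)$ is immediate from the pointwise convergence.

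The main obstacle is aligning the two convergence inputs under a single scaling: although both the line-ensemble convergence and the point-to-point partition-function convergence are established in the literature, they typically appear separately. Some care is required to identify a common choice of scaling parameters realizing both, and to upgrade joint distributional convergence to the almost sure, simultaneously compatible coupling needed to invoke Proposition~\ref{p:polymer-to-polymer}. Once this is done, the remainder is a direct application of the framework developed in this paper.
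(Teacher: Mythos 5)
Your proposal follows essentially the same route as the paper: rescale the O'Connell--Yor polymer action representation $Z^{\textup{OY}}_n$ via Lemma~\ref{lem:action_representation_COV} under intermediate-disorder scaling, cite the literature for convergence of the top line ensemble to $\mathcal{H}_t$ and of the point-to-point free energy to $\cS_t^{\textup{KPZ}}$, couple via Skorokhod representation, and then invoke Proposition~\ref{p:polymer-to-polymer} for each $x>0$ to extract the limiting action $Z^{\textup{KPZ}}_t(x,\cdot,\cdot)$.

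There is one step that does not quite close as written. Proposition~\ref{p:polymer-to-polymer} is applied with $A_n = \tilde Z_n(0,\cdot,\cdot)$ and $T_n = \tilde Z_n(x,\cdot,\cdot)$ for a \emph{fixed} $x>0$, and its hypothesis requires that $T_n(y,1)$ converge for \emph{every} $y\in\mathbb{R}$, not just on a dense set. But your coupling only gives almost-sure convergence of $\tilde Z_n(x,y,1)$ on a countable dense subset of $[0,\infty)\times\mathbb{R}$; for any single $x$, this set contains at most countably many values of $y$, which is insufficient. The paper sidesteps this by coupling the full two-parameter convergence of \cite{nica2021intermediate}, where $Z^{\textup{OY}\ar\textup{KPZ}}_{t,n}(\cdot,\cdot,1)$ converges in distribution to $\cS_t^{\textup{KPZ}}$ as a random element of $C(\mathbb{R}^2,\mathbb{R})$; Skorokhod representation then yields an almost-surely locally uniform convergence, so for every fixed $x>0$ one has $\tilde Z_n(x,y,1)\to\cS_t^{\textup{KPZ}}(x,y)$ for all $y$, and Proposition~\ref{p:polymer-to-polymer} applies directly. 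You could also repair your version by upgrading the dense-set convergence using the monotonicity of $y\mapsto T_n(y,1)-A_n(y,1)$ (from \eqref{e:EL-ge_1}) together with the locally uniform convergence $A_n\to\mathcal{H}_t$ and continuity of $\cS_t^{\textup{KPZ}}(x,\cdot)$, but this requires an additional argument that you have not supplied. Finally, the ``main obstacle'' you flag---aligning the two convergences under a single scaling---is resolved in the paper simply by writing down explicit constants $C_1(t,n), C_2(t,n), C_{3,k}(t)$, so this is a matter of computation rather than a conceptual difficulty.
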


\begin{proof}
Fix $t>0$. Let 
\begin{align*}
 C_1(t,n)=& n^{1/2}t^{-1/2}+2^{-1},\ C_{3,k}(t)= -(k-1)\log t+\log(k-1)!, \\
C_2(t,n)= & n+2^{-1}n^{1/2}t^{1/2}- (n-1)\log( n^{1/2}t^{-1/2}) .
\end{align*}
Recall that $Z^{\textup{OY}}$ is defined in \eqref{def:OY_action_rep}. Define
\begin{align*}
Z^{\textup{OY}\ar\textup{KPZ}}_{t,n}(x,y,k)=\left\{
\begin{array}{cc}
Z^{\textup{OY}}_n(0,y+t^{1/2}n^{1/2},k)-C_1(t,n)y-C_2(t,n)-C_{3,k}(t,n), & x=0,\\
Z^{\textup{OY}}_n(x,y+t^{1/2}n^{1/2},k)-C_1(t,n)(y-x)-C_2(t,n)-C_{3,1}(t,n), & x>0.
\end{array}
\right.
\end{align*}
From Lemma~\ref{lem:action_representation_COV}, $Z^{\textup{OY}\ar\textup{KPZ}}_{t,n}$ is an $e$-polymer action representation.  From \cite{corwin2016kpz}, $Z^{\textup{OY}\ar\textup{KPZ}}_{t,n}(0,\cdot,\cdot)$ converges in distribution to the $\textup{KPZ}_t$ line ensemble $\mathcal{H}_t(\cdot,\cdot)$. From \cite{nica2021intermediate},
 $Z^{\textup{OY}\ar\textup{KPZ}}_{t,n}(x,y,1)$ converges in distribution to the KPZ sheet $\cS_t^{\textup{KPZ}}(x,y)$. From the Skorokhod representation, we can have a coupling such that these convergence holds almost surely.  
 
Now we construct $Z^{\textup{KPZ}}_t$. For $x=0$, we set
\begin{align*}
Z^{\textup{KPZ}}_t(0,y,k)=\mathcal{H}_t(y,k)\ \textup{for all}\ (y,k)\in\mathbb{R}\times\mathbb{N}.
\end{align*}
Fix a realization such that the above convergence holds. Recall that for all $x>0$, we have that $Z^{\textup{OY}\ar\textup{KPZ}}_{t,n}(x,\cdot,\cdot)$ is an $e$-polymer action with respect to $Z^{\textup{OY}\ar\textup{KPZ}}_{t,n}(0,\cdot,\cdot)$. Moreover,  $Z^{\textup{OY}\ar\textup{KPZ}}_{t,n}(x,y,1)$ converges to $\cS_t^{\textup{KPZ}}(x,y)$. Proposition~\ref{p:polymer-to-polymer} implies that $Z^{\textup{OY}\ar\textup{KPZ}}_{t,n}(x,\cdot,\cdot)$ converges pointwise. Denote the limit by $Z^{\textup{KPZ}}_{t}(x,\cdot,\cdot)$. Then $Z^{\textup{KPZ}}_{t}(x,\cdot,\cdot)$ is an $e$-polymer action with respect to $Z^{\textup{KPZ}}_{t}(0,\cdot,\cdot)$. Moreover, $Z^{\textup{KPZ}}_{t}(x,y,1)=\cS_t^{\textup{KPZ}}(x,y)$.
\end{proof}
Define 
\begin{align*}
\cS^{\textup{KPZ}\ar \textup{FP}}_t(x,y)=2^{1/3}t^{-1/3}\cS^{\textup{KPZ}}_t(2^{1/3}t^{2/3}x,2^{1/3}t^{2/3}y)+2^{1/3}t^{2/3}/4!.
\end{align*}
 
The next theorem shows that $\cS^{\textup{KPZ}\to\textup{FP}}_t$ converges to the Airy sheet $\cS$, a result originally proved in \cite{wu2023kpz}.

\begin{theorem}
As $C(\mathbb{R}\times\mathbb{R},\mathbb{R})$-valued random variables, $\cS^{\textup{KPZ}\ar \textup{FP}}_t$ converges in distribution to the Airy sheet $\cS$.
\end{theorem}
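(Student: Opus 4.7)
The plan is to mirror the proof of Theorem~\ref{t:OY_to_FP}, applying Theorem~\ref{thm:action-rep-sym} to a $1{:}2{:}3$--rescaled version of the $e$-polymer action representation $Z^{\textup{KPZ}}_t$ supplied by Theorem~\ref{thm:KPZ_action}. Since both $\cS^{\textup{KPZ}\ar\textup{FP}}_t$ and $\cS$ are translation invariant under $(x,y)\mapsto (x+c,y+c)$, it suffices to construct a coupling on $[0,\infty)\times\mathbb R$ in which the rescaled KPZ sheet converges locally uniformly to $\cS$ almost surely.

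Concretely, I would apply Lemma~\ref{lem:action_representation_COV} with $a_1 = 2^{1/3}t^{-1/3}$, $a_2 = 2^{1/3}t^{2/3}$, $a_3 = a_4 = 0$, and $a_{5,1} = 2^{1/3}t^{2/3}/4!$ (with any choice of $a_{5,k}$ for $k\ge 2$) to define
\begin{align*}
Z^{\textup{KPZ}\ar\textup{FP}}_t(x,y,k) := a_1\, Z^{\textup{KPZ}}_t(a_2 x,\, a_2 y,\, k) + a_{5,k} + (k-1)\, a_1 \log a_2.
\end{align*}
By Lemma~\ref{lem:action_representation_COV} this is an $e^{1/a_1}$-polymer action representation, and setting $w_t := \exp(2^{-1/3}t^{1/3})$ we have $w_t\uparrow\infty$ as $t\to\infty$. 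By choice of $a_{5,1}$, one has $Z^{\textup{KPZ}\ar\textup{FP}}_t(x,y,1) = \cS^{\textup{KPZ}\ar\textup{FP}}_t(x,y)$ on the nose.

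Next I would verify the hypotheses of Theorem~\ref{thm:action-rep-sym}. Conditions (Sym1) and (Sym2) reduce, via the definition of $Z^{\textup{KPZ}\ar\textup{FP}}_t$ and the identity $Z^{\textup{KPZ}}_t(x,y,1)=\cS^{\textup{KPZ}}_t(x,y)$ from Theorem~\ref{thm:KPZ_action}, to the distributional identities $\cS^{\textup{KPZ}}_t(x,y)\ed \cS^{\textup{KPZ}}_t(0,y-x)$ at fixed $x,y$, and $\cS^{\textup{KPZ}}_t(\cdot,0)\ed \cS^{\textup{KPZ}}_t(0,-\cdot)$ as processes in $x$. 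The first follows from translation invariance of the space--time white noise driving the KPZ equation; the second follows by combining that translation invariance with the reflection symmetry $\cS^{\textup{KPZ}}_t(x,y)\ed \cS^{\textup{KPZ}}_t(y,x)$ inherited from the self-adjointness of the stochastic heat kernel. Condition (A1) is the convergence in distribution of the scaled $\textup{KPZ}_t$ line ensemble $Z^{\textup{KPZ}\ar\textup{FP}}_t(0,\cdot,\cdot)$ (i.e., the appropriately shifted and rescaled $\mathcal{H}_t$) to the parabolic Airy line ensemble $\mathcal{A}$, which can be invoked from the literature via tightness of $\mathcal{H}_t$ combined with the Gibbs-resampling uniqueness of $\mathcal{A}$ proved in \cite{aggarwal2025strong}.

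Granting these inputs, Theorem~\ref{thm:action-rep-sym} produces a coupling under which $Z^{\textup{KPZ}\ar\textup{FP}}_t(\cdot,\cdot,1)=\cS^{\textup{KPZ}\ar\textup{FP}}_t$ converges almost surely, locally uniformly on $[0,\infty)\times\mathbb R$, to the Airy sheet $\cS$; translation invariance then extends this to all of $\mathbb R\times\mathbb R$, giving the claimed convergence in distribution. I expect the main obstacle to be pinning down the precise reference for (A1), namely locally uniform distributional convergence of the full ensemble $\mathcal{H}_t$ (not just its top curve) to $\mathcal{A}$ under the $1{:}2{:}3$ scaling; once that ingredient is in place, the rest of the argument is a direct application of the framework developed in Section~\ref{sec:conv_to_AS}.
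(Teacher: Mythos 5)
Your proposal follows essentially the same route as the paper's proof: restrict to $[0,\infty)\times\mathbb R$ by translation invariance, take the $e$-polymer action representation $Z^{\textup{KPZ}}_t$ from Theorem~\ref{thm:KPZ_action}, rescale it via Lemma~\ref{lem:action_representation_COV} with the same constants $a_1=2^{1/3}t^{-1/3}$, $a_2=2^{1/3}t^{2/3}$ to get a $\exp((t/2)^{1/3})$-polymer action representation whose top line equals $\cS^{\textup{KPZ}\ar\textup{FP}}_t$, verify (Sym1), (Sym2), (A1), and apply Theorem~\ref{thm:action-rep-sym}. The paper simply asserts that (Sym1)--(Sym2) ``follow from Theorem~\ref{thm:KPZ_action}'' and cites \cite{virag2020heat,dimitrov2021characterization,wu2023convergence,aggarwal2025strong} for (A1), whereas you spell out the symmetry argument via translation invariance of the driving white noise and self-adjointness of the stochastic heat kernel; that is fine and even slightly more explicit than the paper. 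One small technicality you should add: Theorem~\ref{thm:action-rep-sym} is stated for sequences $w_n\uparrow\infty$, so before invoking it one should fix an arbitrary sequence $t_n\uparrow\infty$ (which the paper does explicitly), from which convergence in distribution along the continuous parameter follows since the limit is independent of the sequence.
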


\begin{proof}
Due to translation symmetry, it suffices to prove convergence when both $\cS^{\textup{KPZ}\ar \textup{FP}}_t$ and $\cS$ are restricted to $[0,\infty)\times\mathbb{R}$.

Fix an arbitrary sequence $t\uparrow\infty$. We will show that there exists a coupling of $\cS^{\textup{KPZ}\ar \textup{FP}}_t$ and $\cS|_{[0,\infty)\times\mathbb{R}}$ such that almost surely $\cS^{\textup{KPZ}\ar \textup{FP}}_t|_{[0,\infty)\times\mathbb{R}}$ converges locally uniformly to $\cS|_{[0,\infty)\times\mathbb{R}}$.

Let $Z^{\textup{KPZ}}_t$ be given by Theorem~\ref{thm:KPZ_action}. Set 
\begin{align*}&Z^{\textup{KPZ}\ar\textup{FP}}_t(x,y,k)\\ &=2^{1/3}t^{-1/3}Z^{\textup{KPZ}}_t(2^{1/3}t^{2/3}x,2^{1/3}t^{2/3}y,k)+2^{1/3}t^{2/3}/4!+ (k-1) 2^{1/3}t^{-1/3}\log(2^{1/3}t^{2/3}).
\end{align*}
From Lemma~\ref{lem:action_representation_COV}, $Z^{\textup{KPZ}\ar\textup{FP}}_t$ is a $\exp((2^{-1}t)^{1/3})$-polymer representation.  Now we apply Theorem~\ref{thm:action-rep-sym}. The symmetry assumptions (Sym1) and (Sym2) follow from Theorem~\ref{thm:KPZ_action}. From \cite{virag2020heat}, \cite{dimitrov2021characterization}, \cite{wu2023convergence}, and \cite{aggarwal2025strong}, $Z^{\textup{KPZ}\ar\textup{FP}}_t(0,\cdot,\cdot)$ converges in distribution to the parabolic Airy line ensemble. This verifies the assumption (A1). Then the desired coupling is constructed from Theorem~\ref{thm:action-rep-sym}. The proof is complete.
\end{proof}
\begin{appendix}
\section{Hypograph and Skorokhod topology}
\subsection{Hypograph topology}\label{a:UC}
We equip the extended real numbers, $\mathbb{R}\cup\{\pm\infty\}$, with the topology that compactifies $\mathbb{R}$ at $\pm\infty$.  Denote by $\uc(\mathbb{R})$ the collection of upper semicontinuous functions from $\mathbb{R}$ to $\mathbb{R}\cup\{\pm \infty\}$. We equip $\uc(\mathbb{R})$ with the {\bf local hypograph topology}, which is described below. 

Given a function $f:\mathbb{R}\to\mathbb{R}\cup\{\pm\infty\}$, the hypograph of $f$, denoted by $\textup{hypo}(f)$, is a subset of $\mathbb{R}\times (\mathbb{R}\cup\{\pm\infty\})$ defined as 
\begin{align*}
\textup{hypo}(f)=\{(x,y)\in \mathbb{R}\times (\mathbb{R}\cup\{\pm\infty\})\, :\, y\leq f(x)\}.
\end{align*}
It is direct to check that $f\in\uc(\mathbb{R})$ if and only if $\textup{hypo}(f)$ is a closed subset of $\mathbb{R}\times (\mathbb{R}\cup\{\pm\infty\})$. Consider a map $\Phi:\mathbb{R}\times (\mathbb{R}\cup\{\pm\infty\})\to \mathbb{R}^2$ given by
\begin{align*}
\Phi(x,y)=\left( \arctan(x), \frac{2\arctan(y)}{\pi(1+x^2)} \right).
\end{align*} 
Here we adopt the convention that $\arctan(\pm\infty)=\pm\frac{\pi}{2}$. The image of $\Phi$ is given by
\begin{align*}
 \left\{(z,w)\in\mathbb{R}^2\, :\, |z|< \frac{\pi}{2}\ \textup{and}\ |w|\leq \cos^2 z \right\}.
\end{align*}   
Adding $(\pm\frac{\pi}{2},0)$ to the image of $\Phi$, we obtain a compact set $K$. Denote by $\textup{CL}(K)$ the collection of closed subsets of $K$. Consider an injective map from $\uc(\mathbb{R})$ to $\textup{CL}(K)$ given by $f\mapsto \Phi(\textup{hypo}(f))\cup\{(\pm\frac{\pi}{2},0)\}.$ Through this map, $\uc(\mathbb{R})$ is identified with a subset of $\textup{CL}(K)$. The local hypograph topology of $\uc(\mathbb{R})$ is the subset topology of the topology on $\textup{CL}(K)$ induced by the Hausdorff distance.

For any $\alpha\in\mathbb{R}$, we denote by $\uc([\alpha,\infty))$ the collection of upper semicontinuous functions from $[\alpha,\infty)$ to $\mathbb{R}\cup\{\pm\infty\}.$ We embed $\uc([\alpha,\infty))$ into $\uc(\mathbb{R})$ by
\begin{align*}
f\mapsto \bar{f}(x)=\left\{
\begin{array}{cc}
f(x), & x\geq \alpha,\\
f(\alpha), & x<\alpha.
\end{array}
\right.
\end{align*}
Denote
\begin{align*}
\uc(\mathbb{R}\times\mathbb{N})=\{(f_1,f_2,\dots)\, :\, f_i\in\uc(\mathbb{R})\ \textup{for all}\ i\in\mathbb{N}\}.
\end{align*}
We equip $\uc(\mathbb{R}\times\mathbb{N})$ with the product topology through identifying it with a countable product of $\uc(\mathbb{R})$'s.

\begin{lemma}\label{lem:UC_compact}
Under the local hypograph topology, $\uc(\mathbb{R})$ is metrizable and compact.
\end{lemma}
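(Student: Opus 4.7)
My plan is as follows. The topology on $\uc(\mathbb{R})$ is by construction the subspace topology inherited from $\mathrm{CL}(K)$ via the injection $\iota\colon f\mapsto\Phi(\textup{hypo}(f))\cup\{(\pm\pi/2,0)\}$. Since $K\subset\mathbb{R}^2$ is compact, the classical hyperspace theorem says that $\mathrm{CL}(K)$ with the Hausdorff metric is compact and metrizable. Metrizability of $\uc(\mathbb{R})$ is then immediate, and to establish compactness it will suffice to prove that $\iota(\uc(\mathbb{R}))$ is a closed subset of $\mathrm{CL}(K)$.

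The first step is to characterize the image of $\iota$ intrinsically. Using $\cos^2(\arctan x)=1/(1+x^2)$, a direct computation shows that for $f\in\uc(\mathbb{R})$ and $z=\arctan x\in(-\pi/2,\pi/2)$ the vertical slice of $\iota(f)$ at $z$ is exactly the interval $[-\cos^2 z,m(z)]$ with $m(z)=\tfrac{2}{\pi}\cos^2 z\,\arctan f(x)\in[-\cos^2 z,\cos^2 z]$. Conversely, any $C\in\mathrm{CL}(K)$ whose slices have this shape at every $z\in(-\pi/2,\pi/2)$, and which contains $(\pm\pi/2,0)$, equals $\iota(f)$ for a unique $f$: read $m(z)$ off $C$, invert $\Phi$ on each slice, and note that closedness of $C$ forces closedness of $\textup{hypo}(f)$, i.e.\ upper semicontinuity of $f$.

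The heart of the argument is then to show this class of sets is closed under Hausdorff convergence. Let $C_n=\iota(f_n)\to C$. Containment of $(\pm\pi/2,0)$ passes to $C$ because any point common to all $C_n$ lies in the Hausdorff limit. For the slice condition, fix $z\in(-\pi/2,\pi/2)$. Since $(z,-\cos^2 z)\in C_n$ for every $n$, it also lies in $C$, so $-\cos^2 z\in C_z$. To conclude that $C_z$ is a downward-closed interval, I would take $w\in C_z$ and any $w'\in(-\cos^2 z,w)$, choose $(z_n,w_n)\in C_n$ with $(z_n,w_n)\to(z,w)$, and use that by the slice property the full segment $\{z_n\}\times[-\cos^2 z_n,w_n]$ is contained in $C_n$. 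Since $-\cos^2 z_n\to -\cos^2 z<w'<w\leftarrow w_n$, the point $(z_n,w')$ lies in $C_n$ for all large $n$, and hence $(z,w')\in C$. Combined with closedness of $C_z$ (as a slice of the closed set $C$), this gives $C_z=[-\cos^2 z,\sup C_z]$, as required.

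The main obstacle will be exactly this last step: vertical slices need not Hausdorff-converge when the ambient sets do, so one cannot simply pass to slicewise limits. The way around it exploits two structural features shared by all $C_n$: the anchor point $(z,-\cos^2 z)$ present in every slice, and the fact that each slice is a downward-closed interval. Together they let one transport interior points of a slice of the limit back through a sequence of approximants, which is what the argument above accomplishes.
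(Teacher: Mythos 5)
Your proof is correct and follows essentially the same route as the paper's: both reduce to showing that $\iota(\uc(\mathbb R))$ is a closed subset of $\mathrm{CL}(K)$, from which compactness and metrizability of $\uc(\mathbb R)$ follow since $\mathrm{CL}(K)$ under the Hausdorff metric is compact and metrizable. The paper leaves closedness of the image as ``direct to check,'' and your intrinsic slice characterization together with the anchor-point argument supplies a valid verification of that step.
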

\begin{proof}
The metrizability is clear, as the topology is induced by the Hausdorff distance. From \cite[Theorem 7.3.8]{burago2001course}, $\textup{CL}(K)$ is compact under the Hausdorff distance. It is direct to check that the image of $\uc(\mathbb{R})$ is a closed subset of $\textup{CL}(K)$. Consequently, $\uc(\mathbb{R})$ is compact.
\end{proof}

\begin{lemma}
Let $f_n$ be a sequence in $\uc(\mathbb{R})$ and $f\in \uc(\mathbb{R})$. Suppose $f_n$ converges to $f$ in $\uc(\mathbb{R})$. Then the following statements hold:
\begin{itemize}
\item for any bounded closed set $F\subset \mathbb{R}$,    
\begin{align}\label{equ:UC_limit_closed}
\limsup_{n\to\infty} \sup_{x\in F}f_n(x)\leq \sup_{x\in F}f(x); 
\end{align}
\item for any bounded open set $U\subset\mathbb{R}$,   
\begin{align}\label{equ:UC_limit_open}
\liminf_{n\to\infty} \sup_{x\in U}f_n(x)\geq \sup_{x\in U}f(x). 
\end{align}
\end{itemize}
\end{lemma}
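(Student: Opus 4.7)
The plan is to translate the assumed convergence in the local hypograph topology into the two standard Kuratowski--Painlev\'e conditions for closed-set convergence, and then deduce each of the two inequalities as a one-paragraph argument.

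\textbf{Step 1 (Unpacking the topology).} By definition, $f_n\to f$ in $\uc(\mathbb{R})$ means $\Phi(\mathrm{hypo}(f_n))\cup\{(\pm\pi/2,0)\}$ converges, in the Hausdorff distance on the compact set $K$, to $\Phi(\mathrm{hypo}(f))\cup\{(\pm\pi/2,0)\}$. Since $\Phi$ is a homeomorphism from $\mathbb{R}\times(\mathbb{R}\cup\{\pm\infty\})$ onto its image (as a product of the homeomorphism $\arctan\colon\mathbb{R}\cup\{\pm\infty\}\to[-\pi/2,\pi/2]$ with a bounded rescaling), and the added boundary points $(\pm\pi/2,0)$ correspond to $|x|=\infty$, the standard characterization of Hausdorff convergence of closed subsets of a compact metric space yields the following two conditions:
\begin{itemize}
\item[(U)] if $(x_n,y_n)\in\mathrm{hypo}(f_n)$ with $x_n\to x_*\in\mathbb{R}$ and $y_n\to y_*\in\mathbb{R}\cup\{\pm\infty\}$, then $y_*\le f(x_*)$;
\item[(L)] for every $x_*\in\mathbb{R}$ with $f(x_*)>-\infty$ and every $y_*\in\mathbb{R}$ with $y_*<f(x_*)$, there exist $(x_n,y_n)\in\mathrm{hypo}(f_n)$ with $(x_n,y_n)\to(x_*,y_*)$.
\end{itemize}

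\textbf{Step 2 (Proof of \eqref{equ:UC_limit_closed}).} Suppose toward contradiction that there exist $\eps>0$ and a subsequence (still indexed by $n$) along which $\sup_{x\in F}f_n(x)\ge \sup_{x\in F}f(x)+\eps$. Choose $x_n\in F$ with $f_n(x_n)\ge \sup_{x\in F}f(x)+\eps/2$; such a point exists because $F$ is closed and bounded (and hence compact) and $f_n$ is upper semicontinuous, so it attains its supremum on $F$. Pass to a subsequence so that $x_n\to x_*\in F$ and $f_n(x_n)\to L\in(\sup_Ff,\infty]$. Applying (U) to $(x_n,f_n(x_n))\in\mathrm{hypo}(f_n)$ gives $L\le f(x_*)\le \sup_Ff$, a contradiction.

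\textbf{Step 3 (Proof of \eqref{equ:UC_limit_open}).} If $\sup_Uf=-\infty$, the right-hand side is $-\infty$ and the inequality is trivial. Otherwise, fix any $M\in\mathbb{R}$ with $M<\sup_Uf$, and choose $x_*\in U$ with $f(x_*)>M$; in particular $f(x_*)>-\infty$. Applying (L) with $y_*=M$ produces $(x_n,y_n)\in\mathrm{hypo}(f_n)$ with $x_n\to x_*$ and $y_n\to M$. Since $U$ is open and $x_n\to x_*\in U$, we have $x_n\in U$ for all large $n$, so $\sup_{x\in U}f_n(x)\ge f_n(x_n)\ge y_n$. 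Taking $\liminf_{n\to\infty}$ gives $\liminf_n\sup_Uf_n\ge M$, and letting $M\uparrow \sup_Uf$ finishes the proof.

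The only nontrivial point is the translation in Step 1, which requires a careful but routine check that Hausdorff convergence in $K$ is equivalent to (U) and (L) after pulling back through $\Phi$; the added boundary points $(\pm\pi/2,0)$ cause no trouble because they correspond to $|x|=\infty$, while in Steps 2--3 we only ever work with $x_*$ inside the bounded sets $F$ or $U$. No other obstacle is expected.
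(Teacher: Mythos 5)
Your proof is correct and takes essentially the same route as the paper: both translate Hausdorff convergence of the $\Phi$-transformed hypographs into the two Kuratowski--Painlev\'e set-convergence conditions (your (U) and (L)), then for the closed-set bound use that an upper semicontinuous function attains its supremum on the compact set $F$ and feed the maximizing sequence into (U), and for the open-set bound approximate the supremum over $U$ and feed the approximant into (L). The paper works this out directly in the parametrized coordinates $\overline{\textup{hypo}}(g_n)$, $\overline{\textup{hypo}}(g)$ (computing limits of the form $y'_n/\cos^2 x_n$), while you abstract the translation into (U) and (L); this is mostly presentational.

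One edge case in Step~2 is not handled cleanly: if $\sup_F f = -\infty$, the contradiction hypothesis $\sup_F f_n \ge \sup_F f + \eps$ is vacuous, and consequently the assertion $L \in (\sup_F f,\infty]$ does not actually follow from your setup. The fix is immediate: in that case suppose instead that $\sup_F f_n \ge c$ along a subsequence for some finite $c$ (this is what the negation of \eqref{equ:UC_limit_closed} gives when the right side is $-\infty$); applying (U) to the maximizers then yields $c \le f(x_*) = -\infty$, a contradiction.
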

\begin{remark}
Conditions \eqref{equ:UC_limit_closed} and \eqref{equ:UC_limit_open} are sufficient to show the convergence of $f_n$ to $f$ in $\uc(\mathbb{R})$. We do not present the proof because this direction is not needed for our purposes.
\end{remark}
\begin{proof}
Let $f_n$ and $f$ be given as in the lemma. Define $g_n$ and $g$ on $(-\frac{\pi}{2},\frac{\pi}{2})$ by
\begin{align*}
g_n(x)=\frac{2 \arctan(f_n(\tan x))}{\pi} ,\ g(x)=\frac{2\arctan(f(\tan x))}{\pi} .
\end{align*}
Define
\begin{align*}
\overline{\textup{hypo}}(g_n)&=\left\{ (x,y)\in\mathbb{R}^2\, :\, x\in(-\frac{\pi}{2},\frac{\pi}{2}),\ -\cos^2x\leq y\leq g_n(x)\cos^2x\right\}\cup \{(\pm\frac{\pi}{2},0)\},\\
\overline{\textup{hypo}}(g)&=\left\{ (x,y)\in\mathbb{R}^2\, :\, x\in(-\frac{\pi}{2},\frac{\pi}{2}),\ - \cos^2x\leq y\leq g(x)\cos^2x\right\}\cup \{(\pm\frac{\pi}{2},0)\}.
\end{align*}
The convergence of $f_n$ to $f$ in $\uc(\mathbb{R})$ is equivalent to the convergence of $\overline{\textup{hypo}}(g_n)$ to $\overline{\textup{hypo}}(g)$ in the Hausdorff distance. We aim to show that for any closed set $F\subset (-\frac{\pi}{2},\frac{\pi}{2})$ and any relatively compact open set $U\Subset (-\frac{\pi}{2},\frac{\pi}{2})$, we have
\begin{equation}\label{equ:UC_closed_set}
\limsup_{n\to\infty} \sup_{x\in F}g_n(x)\leq \sup_{x\in F}g(x),
\end{equation} 
and
\begin{equation}\label{equ:UC_open_set}
\liminf_{n\to\infty} \sup_{x\in U}g_n(x)\geq \sup_{x\in U}g(x).
\end{equation} 

Fix a closed set $F\subset (-\frac{\pi}{2},\frac{\pi}{2})$. The upper semicontinuity of $g_n$ implies there exists $x_n\in F$ such that $g_n(x_n)=\sup_{x\in F}g_n(x)$. Since $\overline{\textup{hypo}}(g_n)$ converges to $\overline{\textup{hypo}}(g)$ in the Hausdorff distance, there exists $(x'_n,y'_n)\in \overline{\textup{hypo}}(g)$ such that $\|(x_n,g_n(x_n)\cos^2 x_n)-(x'_n,y'_n) \|\to 0$. Therefore,
\begin{align*}
\limsup_{n\to\infty} \sup_{x\in F}g_n(x)&=\limsup_{n\to\infty}g_n(x_n)=\limsup_{n\to\infty} \frac{y'_n}{\cos^2 x_n}\\
&=\limsup_{n\to\infty} \frac{y'_n}{\cos^2 x'_n} \leq \limsup_{n\to\infty}g(x'_n)\leq \sup_{x\in F}g(x).
\end{align*} 
We used the upper semicontinuity of $g$ in the last inequality. This proves \eqref{equ:UC_closed_set}.

Next, fix an open set $U\Subset (-\frac{\pi}{2},\frac{\pi}{2})$. For any $\varepsilon>0$, there exists $x_0\in U$ such that $g(x_0)\geq \sup_{x\in U} g(x)-\varepsilon$. Since $\overline{\textup{hypo}}(g_n)$ converges to $\overline{\textup{hypo}}(g)$ in the Hausdorff distance, there exists $(x_n,y_n)\in \overline{\textup{hypo}}(g_n)$ such that $\|(x_0,g(x_0)\cos^2 x_0)-(x_n,y_n)\|\to 0$. Therefore,
\begin{align*}
\sup_{x\in U} g(x)&\leq g(x_0)+\varepsilon= \lim_{n\to\infty} \frac{y_n}{\cos^2 x_0}+\varepsilon=\lim_{n\to\infty} \frac{y_n}{\cos^2 x_n}+\varepsilon\\
&\leq \liminf_{n\to\infty} g_n(x_n)+\varepsilon\leq \liminf_{n\to\infty} \sup_{x\in U} g_n(x)+\varepsilon.
\end{align*}
Since $\varepsilon$ is arbitrary, we prove \eqref{equ:UC_open_set}. The proof is complete.
\end{proof}

\begin{lemma}\label{lem:UC_no_jump}
Let $f_n$ be a sequence in $\uc(\mathbb{R})$ and $f\in \uc(\mathbb{R})$. Assume 
\begin{itemize}
\item each $f_n$ is monotone nondecreasing;
\item $f_n$ converges to $f$ in $\uc(\mathbb{R})$;
\item for all $x\in\mathbb{Q}$, $\lim_{n\to\infty} f_n(x)=:\tilde{f}(x)$ exists.
\end{itemize}
Further assume for some $x_0\in\mathbb{Q}$, 
\begin{align*}
\lim_{x\in\mathbb{Q}, x\downarrow x_0}\tilde{f}(x)=\tilde{f}(x_0).
\end{align*}
Then $f(x_0)=\tilde{f}(x_0)$.
\end{lemma}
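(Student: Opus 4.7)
The plan is to show $f(x_0) = \tilde f(x_0)$ by proving the two inequalities separately, using the closed-set bound \eqref{equ:UC_limit_closed} and the open-set bound \eqref{equ:UC_limit_open} established in the preceding lemma.

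The inequality $\tilde f(x_0) \leq f(x_0)$ is immediate. I apply \eqref{equ:UC_limit_closed} to the singleton $F = \{x_0\}$, giving $\limsup_n f_n(x_0) \leq f(x_0)$. Since $f_n(x_0) \to \tilde f(x_0)$ by hypothesis (as $x_0\in\mathbb Q$), this yields the desired bound.

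For the reverse inequality $f(x_0) \leq \tilde f(x_0)$, I fix an arbitrary $\epsilon > 0$ and set $U_\epsilon = (x_0 - \epsilon, x_0 + \epsilon)$. Applying \eqref{equ:UC_limit_open} to $U_\epsilon$:
$$f(x_0) \leq \sup_{x \in U_\epsilon} f(x) \leq \liminf_{n\to\infty} \sup_{x \in U_\epsilon} f_n(x).$$
For any rational $y > x_0 + \epsilon$, the monotonicity of each $f_n$ gives $\sup_{x \in U_\epsilon} f_n(x) \leq f_n(y)$, hence
$$f(x_0) \leq \liminf_{n\to\infty} f_n(y) = \tilde f(y).$$
As $\epsilon > 0$ was arbitrary, this holds for every rational $y > x_0$. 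Sending $y \downarrow x_0$ along $\mathbb Q$ and invoking the hypothesis $\lim_{y \in \mathbb Q,\, y \downarrow x_0} \tilde f(y) = \tilde f(x_0)$ completes the argument.

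There is essentially no obstacle here: the only small step is bounding the supremum on the open interval $U_\epsilon$ by $f_n(y)$ for a rational $y$ slightly to the right of $U_\epsilon$, and monotonicity of $f_n$ makes this automatic. The hypothesis of nondecreasing $f_n$ together with the right-continuity implicit in the cited open-set inequality is exactly what allows the two bounds to match at the pre-chosen rational $x_0$.
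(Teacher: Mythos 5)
Your proof is correct and follows the same route as the paper: the lower bound $\tilde f(x_0)\le f(x_0)$ from \eqref{equ:UC_limit_closed}, and the upper bound from \eqref{equ:UC_limit_open} combined with monotonicity of $f_n$ to push the supremum over $(x_0-\varepsilon,x_0+\varepsilon)$ to the right endpoint. The only difference is cosmetic (you fix $\varepsilon$ first and let $y$ vary, while the paper fixes the rational $x>x_0$ first and sets $\varepsilon=x-x_0$); your closing remark about ``right-continuity implicit in the open-set inequality'' is a slight misattribution, since the needed right-continuity is exactly the hypothesis on $\tilde f$ at $x_0$, not a feature of \eqref{equ:UC_limit_open}.
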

\begin{proof}
From \eqref{equ:UC_limit_closed}, we have $\tilde{f}(x_0)\leq f(x_0)$. To prove $f(x_0)\leq \tilde{f}(x_0)$, it suffices to show that $f(x_0)\leq \tilde{f}(x)$ for all $x\in \mathbb{Q}$ with $x>x_0$. Fix $x\in \mathbb{Q}$ with $x>x_0$ and set $\varepsilon=x-x_0$. From \eqref{equ:UC_limit_open} and the monotonicity of $f_n$,
\begin{align*}
f(x_0)\leq \sup_{y\in (x_0-\varepsilon,x_0+\varepsilon)}f(y)\leq \liminf_{n\to\infty} \sup_{y\in (x_0-\varepsilon,x_0+\varepsilon)}f_n(y)\leq  \liminf_{n\to\infty} f_n(x)=\tilde{f}(x) .
\end{align*}  
This completes the proof.
\end{proof}
\subsection{Skorokhod topology}
We denote by $\mathbb{D}(\mathbb{R})$ the space of cadlag functions defined on $\mathbb{R}$ equipped with the Skorokhod topology. We refer the reader to Chapter 3 of \cite{billingsley1999convergence} for more details. Denote
\begin{align*}
\mathbb{D}(\mathbb{R}\times\mathbb{N})=\{(f_1,f_2,\dots)\, :\, f_i\in\mathbb{D}(\mathbb{R})\ \textup{for all}\ i\in\mathbb{N}\}.
\end{align*}
We equip $\mathbb{D}(\mathbb{R}\times\mathbb{N})$ with the product topology by identifying it with a countable product of $\mathbb{D}(\mathbb{R})$'s.

\begin{lemma}\label{lem:Skorokhod}
Let $f_n$ be a sequence in $\mathbb{D}(\mathbb{R})$, $f\in \mathbb{D}(\mathbb{R})$, and $\tilde{f}\in C(\mathbb{R},\mathbb{R})$. Suppose that $f_n$ converges to $f$ in $\mathbb{D}(\mathbb{R})$, and that $\lim_{n\to\infty}f_n(x)=\tilde{f}(x)$ for all $x\in\mathbb{Q}$. Then $f(y)=\tilde{f}(y)$ for all $y\in\mathbb{R}$.
\end{lemma}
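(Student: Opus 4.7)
The plan is to use two standard facts about Skorokhod convergence and cadlag functions. First, if $f_n\to f$ in $\mathbb{D}(\mathbb{R})$, then $f_n(x)\to f(x)$ at every continuity point $x$ of $f$. This is the classical pointwise consequence of Skorokhod convergence (one can either cite it from \cite{billingsley1999convergence}, or derive it by noting that the Skorokhod time changes $\lambda_n$ satisfy $\lambda_n\to \mathrm{id}$ and $f_n\circ\lambda_n\to f$ uniformly on compacts, so that $|f_n(x)-f(x)|\le |f_n(x)-f(\lambda_n^{-1}(x))|+|f(\lambda_n^{-1}(x))-f(x)|$, where the second term vanishes at continuity points). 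Second, the discontinuity set of any cadlag function $f$ is at most countable; call it $D$.

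Now I will identify $f$ and $\tilde f$ on a dense set. Since $D$ is countable, the set $\mathbb{Q}\setminus D$ is still dense in $\mathbb{R}$. For each $x\in\mathbb{Q}\setminus D$, the first fact gives $f_n(x)\to f(x)$, while the hypothesis gives $f_n(x)\to\tilde f(x)$. Uniqueness of limits yields
\begin{equation*}
f(x)=\tilde f(x)\quad\text{for all }x\in\mathbb{Q}\setminus D.
\end{equation*}

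Finally, I would extend this equality to all of $\mathbb{R}$ using right-continuity of $f$ and continuity of $\tilde f$. Fix $y\in\mathbb{R}$ and choose a sequence $x_k\in\mathbb{Q}\setminus D$ with $x_k\downarrow y$; such a sequence exists because $\mathbb{Q}\setminus D$ is dense. Then
\begin{equation*}
f(y)=\lim_{k\to\infty}f(x_k)=\lim_{k\to\infty}\tilde f(x_k)=\tilde f(y),
\end{equation*}
where the first equality uses right-continuity of $f$ at $y$ and the last uses continuity of $\tilde f$. This proves the lemma. No step appears to be a real obstacle here; the only subtlety is remembering to intersect $\mathbb{Q}$ with the continuity set of $f$ before invoking Skorokhod pointwise convergence.
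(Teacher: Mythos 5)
Your proof has a genuine gap in the assertion that $\mathbb{Q}\setminus D$ is dense. The discontinuity set $D$ of a cadlag function is countable, but nothing prevents $D$ from containing all of $\mathbb{Q}$: for example, enumerate the rationals as $q_1,q_2,\dots$ and set $f=\sum_k 2^{-k}\mathbbm{1}_{[q_k,\infty)}$, which is cadlag and discontinuous at every rational. In that case $\mathbb{Q}\setminus D=\emptyset$, so your identification step produces no points at all, and the hypothesis $\lim_n f_n(x)=\tilde f(x)$ is only available on $\mathbb{Q}$, not on some other dense set avoiding $D$. So the argument as written does not go through.

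The paper's proof handles exactly this case. Instead of insisting on continuity at rationals, it uses the Skorokhod time changes to show that for each $x_0\in\mathbb{Q}$ one has either $f(x_0)=\tilde f(x_0)$ or $f(x_0-)=\tilde f(x_0)$: writing $x_n=\lambda_n^{-1}(x_0)\to x_0$, one gets $\tilde f(x_0)=\lim_n f_n(x_0)=\lim_n f(x_n)$, and depending on whether infinitely many $x_n\geq x_0$ or eventually $x_n<x_0$, this limit is $f(x_0)$ or $f(x_0-)$. Then for an arbitrary $y$, choosing rationals $y_j\downarrow y$ one can pick $y'_j\in(y,y_j]$ with $|f(y'_j)-\tilde f(y_j)|\le j^{-1}$ (taking $y'_j=y_j$ in the first case, or $y'_j$ slightly to the left of $y_j$ in the second), and right-continuity of $f$ together with continuity of $\tilde f$ finishes the proof. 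Your final limiting step is the same as the paper's; the missing ingredient is the weaker dichotomy at rationals in place of the (possibly vacuous) statement about continuity points. If you replace your identification step with this dichotomy, your proof becomes correct.
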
 
\begin{proof}
We claim that for all $x_0\in\mathbb{Q}$, we have 
\begin{equation}\label{equ:Skorokhod_1}
f(x_0)=\tilde{f}(x_0)\ \textup{or}\ f(x_0-)=\tilde{f}(x_0).
\end{equation}
Assume for a moment \eqref{equ:Skorokhod_1} holds. Fix arbitrary $y\in\mathbb{R}$ and let $y_j$ be a sequence of rational numbers such that $y_j\downarrow y$. From \eqref{equ:Skorokhod_1}, there exists $y'_j\in (y,y_j]$ such that $|f(y'_j)-\tilde{f}(y_j)|\leq j^{-1}$. Therefore,
\begin{align*}
f(y)=\lim_{j\to\infty} f(y'_j)=\lim_{j\to\infty}\tilde{f}(y_j)=\tilde{f}(y).
\end{align*} 

It remains to prove \eqref{equ:Skorokhod_1}. Since $f_n$ converges to $f$ in $\mathbb{D}(\mathbb{R})$, there exists a sequence of strictly increasing, continuous maps from $\mathbb{R}$ to $\mathbb{R}$, denoted by $\lambda_n$, such that
$$
\lim_{n\to\infty} \sup_{y\in\mathbb{R}}|\lambda_n(y)-y|=0,
$$
and 
$$\lim_{n\to\infty}\sup_{y\in [-r,r]}|f_n(\lambda_n(y))-f(y)|=0$$
for all $r>0$. Fix $x_0\in\mathbb{Q}$ and let $x_n=\lambda^{-1}_n(x_0)$. Then $x_n\to x_0$ and $|f_n(x_0)-f(x_n)|\to 0$. Therefore,
\begin{align*}
\tilde{f}(x_0)=\lim_{n\to\infty} f_n(x_0)=\lim_{n\to\infty} f(x_n).
\end{align*}  
If $x_n\geq x_0$ for infinitely many $n$, then we have $\lim_{n\to\infty} f(x_n)=f(x_0)$. If $x_n<x_0$ for $n$ large enough, then $\lim_{n\to\infty} f(x_n)=f(x_0-)$. This shows \eqref{equ:Skorokhod_1} and completes the proof. 
\end{proof}

\bigskip
\noindent {\bf Acknowledgments.}
B.V. was supported by the Canada Research Chair program, the NSERC Discovery Accelerator grant, and the MTA Momentum Random Spectra research group. X.W. was partially supported by the NSF through NSF-2348188 and by the Simons Foundation through MPS-TSM-00007939.
\end{appendix}

\bibliographystyle{dcu}
\bibliography{asl}

\end{document}